\documentclass[12pt]{article}

\usepackage{amsmath, amssymb, amsthm}
\usepackage{geometry}
\usepackage{tikz}
\usepackage{hyperref}
\usepackage{setspace}
\usepackage{graphicx}
\usepackage{enumerate}
\usepackage{comment}
\makeatletter
\newcommand{\myitem}[1]{%
	\item[#1]\protected@edef\@currentlabel{#1}%
}
\makeatother

\usepackage{clipboard}
\numberwithin{equation}{section}
\newtheorem{mytheorem}{Theorem}
\newtheorem{mylemma}{Lemma}
\newtheorem{defi}{Definition}[section]
\numberwithin{mytheorem}{subsection}
\numberwithin{mylemma}{subsection}
\usepackage{amsbsy}
\usepackage{graphicx}
\usepackage[T1]{fontenc}
\usepackage{lmodern}
\usepackage{imakeidx}
\usepackage{amssymb}
\usepackage{cite}
\newcommand{\rnn}{\mathbb{R}^{N}}

\newcommand{\ra} {\rightarrow}

\newcommand{\la} {\lambda}

\newcommand{\noi} {\noindent}
\newcommand{\na} {\nabla}

\newcommand{\mb} {\mathbb}

\newcommand{\I}{\int\limits_}
\newenvironment{myproof}[2] {\paragraph{Proof of {#1} {#2} :}}{\hfill$\square$}

\usepackage[english]{babel}
\usepackage[dvipsnames]{xcolor}

\newtheorem{theorem}{Theorem}[subsection]
\newtheorem{lemma}[theorem]{Lemma}
\newtheorem{proposition}[theorem]{Proposition}

\theoremstyle{definition}

\theoremstyle{remark}

\title{Ground state solutions of mixed local-nonlolcal equations with Hartree type nonlinearities}
\author{Gurdev Chand Anthal, Prashanta Garain and Nidhi Nidhi}

\date{\today}

\begin{document}

\maketitle

\begin{abstract}
We study a class of mixed local-nonlocal equations with Hartree-type nonlinearities of the form
\begin{equation}\label{meqnab}
-\Delta u + (-\Delta)^s u + u = (I_\alpha * F(u))\,F'(u) \quad \text{in } \mathbb{R}^N,
\end{equation}
where $N \geq 3$, $s \in (0,1)$, and $F \in C^1(\mathbb{R},\mathbb{R})$ satisfies Berestycki--Lions type assumptions. The equation combines the classical Laplacian with the fractional Laplacian, while the Hartree-type nonlinearity is given by a nonlocal convolution term involving the Riesz potential $I_\alpha$, with $\alpha \in (0,N)$. We prove the existence of ground state solutions to \eqref{meqnab}. To this end, we establish regularity properties and derive a Poho\v{z}aev-type identity for general weak solutions of \eqref{meqnab}. Moreover, we obtain symmetry properties of ground state solutions of \eqref{meqnab} via polarization methods.
\end{abstract}

\noi {Keywords: Mixed local-nonlocal problem, Hartree-type nonlinearity, Ground state soution, regularity, symmetry.}

\noi{\textit{2020 Mathematics Subject Classification: 35M10, 35J61, 35A01.}

\tableofcontents

\section{Introduction and main results}
\subsection{Introduction}
We study the following class of Mixed local-nonlocal equation with Choquard nonlinearity given by
	\begin{equation}\label{1.1}
		-\Delta	u+(-\Delta)^su +u =  (I_{\alpha}*F(u))F'(u)\text{ in } \mathbb{R}^N,
	\end{equation}
	where $N\geq 3$ and $\alpha \in (0,N)$. Here $\Delta$ is the classical Laplace operator and for $0<s<1$, the fractional Laplace operator $(-\Delta)^s$ is defined as:
	$$(-\Delta)^su(x)=C(N,s)\text{P.V.}\int_{\mathbb{R}^N}\frac{u(x)-u(y)}{|x-y|^{N+2s}}dy,
    $$
	where $C(N,s)$ is the normalizing constant given by 
	$$C(N,s)=\left(\int_{\mathbb{R}^N}\frac{1-\cos(x)}{|x|^{N+2s}}\right)^{-1},$$
	and P.V. denotes the principal value.\\
    Moreover, $I_{\alpha}$ is the Riesz potential of order $\alpha \in (0,N)$ defined by
	$$I_{\alpha}(x)=\frac{A_{N,\alpha}}{|x|^{N-\alpha}},\text{ with } A_{N,\alpha}=\frac{\Gamma(\frac{N-\alpha}{2})}{\pi^{\frac{N}{2}}2^{\alpha}\Gamma(\frac{\alpha}{2})}\; \text{  for every } x\in \mathbb{R}^N \setminus \{0\}.$$
	We assume that $F\in C^1(\mathbb{R};\mathbb{R})$ and $f:=F'$ satisfies the following Beretyscki-Lions type conditions:
	\begin{enumerate}
		\myitem{(f1)}\label{f1} There exists a constant $C>0$ such that 
		$|tf(t)|\leq C(|t|^{\frac{N+\alpha}{N}}+|t|^{\frac{N+\alpha}{N-2}}) \text{ for all } t\in \mathbb{R},$
		\myitem{(f2)}\label{f2} $\displaystyle \lim_{t\rightarrow 0}\frac{F(t)}{|t|^{\frac{N+\alpha}{N}}}=0=\lim_{|t|\rightarrow \infty}\frac{F(t)}{|t|^{\frac{N+\alpha}{N-2}}}$,
		\myitem{(f3)}\label{f3} there exists $t_0\in \mathbb{R}\setminus\{0\}$ such that $F(t_0)\neq 0$,
        \myitem{(f4)}\label{f4} $f$ is nondecreasing and there exists a constant $\bar{k}>0$ such that $|f(t)| \leq \bar{k}|t|$ for all $t \in \mathbb R$.
	\end{enumerate} 
    The convolution $I_\alpha*F(u)$ is defined by
\[
(I_\alpha * F(u))(x) := \int_{\mathbb{R}^N} I_\alpha(x-y)\, F(u)(y)\, dy.
\]
     The study of problems involving nonlinearities with such growth conditions can be dated back to the work of Berestycki and Lions; see \cite{Berestycki-Lions1, Berestycki-Lions2}, where they studied the following localised version:
\begin{equation}\label{B-L_equation}
		-\Delta u + u = h(u) \text{ in } \mathbb{R}^N.
\end{equation}
	Here, the nonlinearity $h$ satisfies the following assumptions:
    \begin{itemize}
        \item $$-\infty<\liminf_{s\rightarrow0^+} \frac{h(s)}{s}\leq \limsup_{s\rightarrow 0^+}\frac{h(s)}{s}=-m<0;$$
        \item $$-\infty \leq \limsup_{s\rightarrow + \infty}\frac{h(s)}{s^{2^*-1}}\leq 0;$$
        \item there exists $t_0>0$ such that $H(t_0)>0.$
    \end{itemize}
where $H(t)=\int_{0}^{t}h(\sigma)d\sigma$.
Such problems are studied when one is looking for solitary waves or stationary states of nonlinear Klein-Gordon and Schr$\ddot{\text{o}}$dinger equations. By using the direct method of minimization in combination with P\'{o}lya-Szeg\"{o} inequality for Schwartz symmetrization and a scaling argument, Berestycki and Lions showed that the solution of the constrained minimization problem
\begin{align*}
    \min\left\{ \I{\mb R^N} |\na u|^2
\,dx: u \in H^1(\mb R^N)~\text{and}~\I{\mb R^N} \Big(H(u)-\frac{|u|^2}{2}\Big)\,dx =1\right\},
\end{align*}
is infact a radial ground state solution of \eqref{B-L_equation}, see \cite[Theorem~1]{Berestycki-Lions1}. Furthermore, they also proved if $u \in L^\infty_{\text{loc}}(\mb R^N)$ is a finite energy solution of \eqref{B-L_equation}, then $u$ satisfies the Poho\v{z}aev identity
\begin{align*}
    \frac{N}{2}\I{\mb R^N} |\na u|^2 dx +\frac{N}{2} \I{\mb R^N}|u|^2 dx = N\I{\mb R^N} H(u) dx,
\end{align*}
see \cite[Proposition~1]{Berestycki-Lions1}. This, in particular, implies that the assumptions on $h$ are "almost necessary for the existence of nontrivial finite energy solutions", \cite[Sec.~2.2, Page 322]{Berestycki-Lions1}. In the spirit of the above work, V. Moroz and J. Van Schaftingen in \cite{Moroz2015existence} studied a class of non-local Choquard equations, precisely:
\begin{equation}\label{Moroz_Schaftingen_problem}
-\Delta u + u = (I_{\alpha}*F(u))F'(u) \text{ in } \mathbb{R}^N,
\end{equation}
with $f$ and $F$ satisfying \ref{f1}-\ref{f3}. Since the nonlocal term is not preserved or controlled under Schwartz symmetrization and the problem is not scale invariant, the approach of Berestycki and Lions fails for the nonlocal term. In order to overcome this difficulty, Moroz and Schaftingen employed a scaling technique introduced by L. Jeanjean \cite{Jeanjean}. More specifically, the authors first constructed a Palais-smale sequence that satisfies asymptotically the Poho\v{z}aev identity and then employed a concentration compactness argument to prove the existence of a nontrivial solution to \eqref{Moroz_Schaftingen_problem}. Further, in order to prove that the obtained solution is a ground state requires a Poho\v{z}aev identity, which the author's obtained by addressing the regularity of the solution. \\
The case of fractional Laplacian is much more involved, since it is difficult to obtain Poho\v{z}aev identity under the minimal assumptions of Berestycki and Lions. Now for the case of local nonlinearities, we refer to the works \cite{Byeon-Kwon-Seok, Chang-Wang}, where the existence of ground state solutions are obtained under stronger regularity assumptions on the nonlinearity viz. $f \in C^1(\mb R) $ in \cite{Byeon-Kwon-Seok} and $f \in C^{0,\mu}_{\text{loc}}(\mb R)$ for some $\mu \in (1-2s,1) $ if $s \in (0,1/2]$ in \cite{Chang-Wang} {respectively}. Regarding the case of nonlocal nonlinearity, we refer to \cite{Cingolani-Gallo-Tanaka-Math-Engg., Luo, Shen-Gao-Yang}. In \cite{Shen-Gao-Yang}, the authors proved a Poho\v{z}aev identity and obtained the existence of ground state solutions under the restriction $\alpha \in (0,2s)$ and $f \in C^1(\mb R)$. Later, Luo in \cite{Luo}, using constrained variational method along with a deformation lemma obtained a ground state solution that satisfies a Poho\v{z}aev identity under the relaxed restriction $\alpha \in \min\{4s,N\}$ and $f \in C(\mb R)$. We remark here that in both the above papers, $f$ is assumed to be superlinear. Recently in \cite{Cingolani-Gallo-Tanaka-Math-Engg.}, the authors further relaxed the superlinear assumption on the nonlinearity and proved the existence of least energy solution on the Poho\v{z}aev manifold for all $\alpha \in (0,N)$. {More precisely, they studied the following 
\begin{equation*}
    (-\Delta)^s u+\mu u = (I_{\alpha}*F(u))F'(u) \text{ in } \mathbb{R}^N,
\end{equation*}
with $f$ satisfying:
\begin{enumerate}
    \item $f\in C(\mathbb{R},\mathbb{R});$
    \item  $\displaystyle \limsup_{t\rightarrow 0}\frac{|tf(t)|}{|t|^{\frac{N+\alpha}{N}}}<+\infty
    $, and $\displaystyle \limsup_{|t|\rightarrow+\infty}\frac{|tf(t)|}{|t|^{\frac{N+\alpha}{N-2s}}}<+\infty$;
    \item defining $F(t):=\int_0^tf(\tau)d\tau$, it satisfies:
    $$\lim_{t\rightarrow 0}\frac{F(t)}{|t|^{\frac{N+\alpha}{N}}}=0 \text{ and } \lim_{|t|\rightarrow +\infty}\frac{F(t)}{|t|^{\frac{N+\alpha}{N-2s}}}=0;$$
    \item there exists $t_0\in \mathbb{R}\setminus \{0\}$, such that $F(t_0)\neq 0$, 
\end{enumerate}
and proved the existence of a radially symmetric weak solution that satisfies the following Poho\v{z}aev identity:
$$\left(\frac{N-2s}{2}\right)[u]^2+\frac{N}{2}\mu \left\| u \right\|_2^2=\left(\frac{N+\alpha}{N}\right)\int_{\mathbb{R}^N}(I_{\alpha}*F(u))F(u)dx,$$
moreover, the weak solution turns out to be the ground state on the Poho\v{z}aev manifold as well.
}

Recently, research is also being carried out to obtain the Poho\v{z}aev identity for the nonlocal operator under the minimal regularity assumptions on the nonlinearity and the solution. We refer to the works \cite{Anthal2025Pohozaev, ambrosio, Cingolani-Gallo-Tanaka-Adv.-Nonlinear-Stud} in this direction. \\
Motivated by the above literature, we are interested to study the mixed {local-nonlocal} problem \eqref{1.1}. An operator of this kind comes into play when both local and nonlocal changes affect a physical phenomenon. This operator may be encountered in the study of bi-model power law distribution processes, as referenced in \cite{pagnini2021should}. It is also examined in the context of optimum searching theory, biomathematics, and animal foraging; see \cite{dipierro2022non} for further elaboration. Various contributions have examined issues pertaining to the existence of solutions, their regularity and symmetry properties, Faber-Krahn type inequalities, Neumann problems, see for instance \cite{biagi2021global, biagi2022mixed, abatangelo2021elliptic, Garain2023Higher} and the references therein. Regarding Choquard equation involving mixed local-nonlocal operators we {refer to} the works \cite{Anthal2023Choquard, Anthal2025Mixed} which
address the case of a bounded domain with specific boundary conditions. Additionally, the case of $\mathbb{R}^N$ has also been addressed in \cite{Giacomoni2025Normalized, Nidhi2025Normalized, Nidhi2025Quasilinear, Nidhi2025Solution}, where the authors studied the existence, nonexistence, regularity and multiplicity of normalized solutions.

	\subsection{Main results}
    Throughout the rest of the paper, we assume that $0<s<1$, $N\geq 3$ and $\alpha\in(0,N)$ unless otherwise mentioned.
   Our first main result reads as follows:
	\begin{theorem}\label{Theorem 1}
		Assume that $f$ satisfies {\rm (f1)--(f3)}. Then \eqref{1.1} has a weak solution in $H^1(\mathbb{R}^N)$.
	\end{theorem}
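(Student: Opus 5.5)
We follow the Moroz--Van Schaftingen strategy for \eqref{Moroz_Schaftingen_problem}, adapted to the mixed operator. Work in $H^1(\mathbb{R}^N)$ with the norm
\[
\|u\|^2=\|\nabla u\|_2^2+[u]_s^2+\|u\|_2^2,
\]
where $[u]_s^2=\frac{C(N,s)}{2}\iint\frac{|u(x)-u(y)|^2}{|x-y|^{N+2s}}\,dx\,dy\le C\|u\|_{H^1}^2$. Hence this norm is equivalent to the standard $H^1$ norm. The energy functional is
\[
J(u)=\tfrac12\|\nabla u\|_2^2+\tfrac12[u]_s^2+\tfrac12\|u\|_2^2-\tfrac12\int_{\mathbb{R}^N}(I_\alpha*F(u))F(u)\,dx .
\]
By (f1) we have $|F(t)|\le C(|t|^{\frac{N+\alpha}{N}}+|t|^{\frac{N+\alpha}{N-2}})$. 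The Hardy--Littlewood--Sobolev inequality, with $F(u)\in L^{\frac{2N}{N+\alpha}}$ controlled through $\|u\|_2$ and $\|u\|_{2^*}$, then shows that $J\in C^1(H^1(\mathbb{R}^N))$ and that critical points of $J$ are weak solutions of \eqref{1.1}.

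\textbf{Step 1 (mountain pass geometry).} From (f2) and HLS we get $\int(I_\alpha*F(u))F(u)=o(\|u\|^2)$ as $u\to0$, so $J\ge c\rho^2>0$ on a small sphere $\|u\|=\rho$. For the other side of the pass, (f3) gives some $v_0\in H^1$ with $\int(I_\alpha*F(v_0))F(v_0)>0$, constructed as in \cite{Moroz2015existence} from a cutoff of $t_0$. Consider the dilations $v_\tau(x)=v_0(x/\tau)$. The three quadratic terms scale as $\tau^{N-2}$, $\tau^{N-2s}$ and $\tau^{N}$, while the Choquard term scales as $\tau^{N+\alpha}$. Since $N+\alpha$ exceeds all three exponents, $J(v_\tau)\to-\infty$ as $\tau\to\infty$. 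The mountain pass level $c>0$ is therefore well defined.

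\textbf{Step 2 (Jeanjean's trick).} Define $\tilde J(\sigma,u)=J(u(e^{-\sigma}\cdot))$ on $\mathbb{R}\times H^1$. Its mountain pass level equals $c$. Applying the general minimax principle yields a Palais--Smale sequence $(u_n)$ for $J$ at level $c$ with $P(u_n)\to0$, where
\[
P(u)=\tfrac{N-2}{2}\|\nabla u\|_2^2+\tfrac{N-2s}{2}[u]_s^2+\tfrac N2\|u\|_2^2-\tfrac{N+\alpha}{2}\int(I_\alpha*F(u))F(u).
\]
Boundedness follows from the combination
\[
(N+\alpha)J(u_n)-P(u_n)=\tfrac{\alpha+2}{2}\|\nabla u_n\|_2^2+\tfrac{\alpha+2s}{2}[u_n]_s^2+\tfrac{\alpha}{2}\|u_n\|_2^2 .
\]
The left side equals $(N+\alpha)c+o(1)$ and every coefficient on the right is positive, so $(u_n)$ is bounded in $H^1$.

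\textbf{Step 3 (nonvanishing and passage to the limit).} This is the main obstacle, because the nonlinearity has the critical HLS exponents at both ends. If $\sup_y\int_{B_1(y)}|u_n|^2\to0$, then Lions' lemma gives $u_n\to0$ in $L^p$ for $2<p<2^*$. Together with (f2) and a splitting of $F$ into small-, middle- and large-$|t|$ parts, this yields $\int(I_\alpha*F(u_n))F(u_n)\to0$ and also $\int(I_\alpha*F(u_n))f(u_n)u_n\to0$. The latter requires the estimate $|tf(t)|$ from (f1), with the endpoint pieces handled by smallness as in \cite[Prop.~2.1]{Moroz2015existence}. Then $\langle J'(u_n),u_n\rangle\to0$ forces $\|u_n\|\to0$, contradicting $c>0$. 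So, after translating (which is allowed by translation invariance), $u_n\rightharpoonup u\ne0$ weakly in $H^1$ and $u_n\to u$ a.e.

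To pass to the limit in $\langle J'(u_n),\varphi\rangle$ for $\varphi\in C_c^\infty$, we need $I_\alpha*F(u_n)\rightharpoonup I_\alpha*F(u)$ weakly in $L^{\frac{2N}{N-\alpha}}$. This follows from boundedness of $F(u_n)$ in $L^{\frac{2N}{N+\alpha}}$, a.e. convergence, and HLS. We also need $f(u_n)\varphi\to f(u)\varphi$ strongly in $L^{\frac{2N}{N+\alpha}}$. This follows from the local compact embedding and the growth bound (f1), via Vitali's theorem. The quadratic terms, including the nonlocal form $[\,\cdot,\cdot\,]_s$, converge by weak convergence. Hence $J'(u)=0$, and $u$ is a nontrivial weak solution of \eqref{1.1}.
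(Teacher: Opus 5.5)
Your proposal is correct and takes the same route as the paper: mountain-pass geometry, Jeanjean's scaling trick producing a Palais--Smale sequence with $P(u_n)\to0$, boundedness from $(N+\alpha)J(u_n)-P(u_n)$, and Lions' lemma plus translation to reach a nontrivial weak limit that is a critical point. The differences are minor. You exclude vanishing through $\langle J'(u_n),u_n\rangle\to0$, whereas the paper uses the lower bound $\liminf A(u_n)>0$ forced by $P(u_n)\to0$. You also justify the passage to the limit in $J'(u_n)\varphi$ explicitly, pairing weak convergence of $I_\alpha*F(u_n)$ in $L^{\frac{2N}{N-\alpha}}$ with strong convergence of $f(u_n)\varphi$, where the paper only appeals to the continuity of $I'$.
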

   \noindent Moreover, we show that the weak solution obtained in Theorem \ref{Theorem 1} above is a ground state solution under the additional hypothesis $(f4)$.
\begin{theorem}\label{Theorem 2}
		Let $f$ satisfy {\rm (f1)--(f4)}, and let $u \in H^1(\mathbb{R}^N)$ be the weak solution obtained in Theorem~\ref{Theorem 1}. Then $u$ is a ground state solution of \eqref{1.1}.
	\end{theorem}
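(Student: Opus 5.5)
The solution $u$ of Theorem~\ref{Theorem 1} is built as a mountain pass solution. The plan is to show that its energy equals the least energy among all nontrivial weak solutions. Write
\[
J(v)=\frac12\|\nabla v\|_2^2+\frac12[v]_s^2+\frac12\|v\|_2^2-\frac12\int_{\mathbb{R}^N}(I_\alpha*F(v))F(v)\,dx
\]
for the energy functional on $H^1(\mathbb{R}^N)$, where $[v]_s$ is the Gagliardo seminorm (with the constant $C(N,s)/2$ absorbed). Let $c$ be the mountain pass level. The Jeanjean-type construction in Theorem~\ref{Theorem 1} yields a Palais--Smale sequence at level $c$ that asymptotically satisfies the Poho\v{z}aev identity, and the concentration-compactness step gives $J(u)\le c$ (by lower semicontinuity along a translated subsequence). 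It therefore suffices to prove $c\le J(v)$ for every nontrivial weak solution $v$. That yields $J(u)\le c\le m:=\inf\{J(v): v\ne0 \text{ weak solution}\}\le J(u)$, so $u$ is a ground state.

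To prove $c\le J(v)$, I would construct, for each nontrivial weak solution $v$, a path $\gamma\in\Gamma$ (the class of paths from $0$ into $\{J<0\}$) with $\max_t J(\gamma(t))=J(v)$. This is where the regularity and the Poho\v{z}aev identity for general weak solutions are needed. Hypothesis (f4), $|f(t)|\le\bar k|t|$ with $f$ nondecreasing, makes $f$ locally Lipschitz-like. With (f1) it should allow a Brezis--Kato / Moser iteration for the mixed operator, giving $v\in L^p$ for all $p\in[2,\infty]$ and then $v\in W^{2,p}_{\mathrm{loc}}$ via the local Laplacian. This is enough regularity to test the equation against $x\cdot\nabla v$ (truncated by a cutoff $\varphi(x/R)$, then $R\to\infty$). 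The resulting identity is
\[
\frac{N-2}{2}\|\nabla v\|_2^2+\frac{N-2s}{2}[v]_s^2+\frac N2\|v\|_2^2=\frac{N+\alpha}{2}\int_{\mathbb{R}^N}(I_\alpha*F(v))F(v)\,dx .
\]
I take this identity as established earlier in the paper, or prove it in this way.

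The main obstacle is that the operator is not homogeneous under the dilation $v_t(x)=v(x/t)$: $\|\nabla v_t\|_2^2$ scales like $t^{N-2}$ while $[v_t]_s^2$ scales like $t^{N-2s}$. Define
\[
g(t)=J(v(\cdot/t))=\frac{t^{N-2}}2 a+\frac{t^{N-2s}}2 b+\frac{t^N}2 d-\frac{t^{N+\alpha}}2 e ,
\]
with $a,b,d,e>0$. The Poho\v{z}aev identity says exactly that $g'(1)=0$. I would show that $t=1$ is the global maximum of $g$ on $(0,\infty)$. Writing $g'(t)=t^{N-1}h(t)$, we get
\[
h(t)=\tfrac{N-2}{2}a\,t^{-2}+\tfrac{N-2s}{2}b\,t^{-2s}+\tfrac N2 d-\tfrac{N+\alpha}{2}e\,t^{\alpha}.
\]
Here $h$ is strictly decreasing, since every term decreases in $t$ and $N\ge3$. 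Hence $h$ has the unique zero $t=1$, so $g$ increases on $(0,1)$ and decreases afterwards, with $g(t)\to-\infty$ as $t\to\infty$ and $g(t)\to0$ as $t\to0$. Moreover, $t\mapsto v(\cdot/t)$ is continuous into $H^1$ for $t>0$ and tends to $0$ in $H^1$ as $t\to0$, because $N\ge3$ makes every norm term vanish. Choosing $T$ large with $g(T)<0$, the reparametrized path $\gamma(\tau)=v(\cdot/(T\tau))$ for $\tau\in(0,1]$, with $\gamma(0)=0$, belongs to $\Gamma$, and $c\le\max g=g(1)=J(v)$.

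Finally, I would check that $u$ is itself nontrivial and a weak solution, which holds by Theorem~\ref{Theorem 1}. This places it in the competing class, and it completes the argument that $u$ attains $m$.
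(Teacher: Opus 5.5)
Your proposal is correct and follows the paper's proof essentially step for step. You get $J(u)\le c$ from the Poho\v{z}aev--Palais--Smale sequence, and $c\le J(v)$ for every nontrivial solution via the dilation path $t\mapsto v(\cdot/t)$, whose energy is uniquely maximized at $t=1$ by the Poho\v{z}aev identity; this is the paper's Lemma~\ref{lemma 4.3}. Two small points. First, $J(u)\le c$ is not plain weak lower semicontinuity of $J$, which fails because of the Choquard term; it uses $P(u)=0$, so that $J(u)=J(u)-\frac{1}{N+\alpha}P(u)$ is a positive combination of $\|\nabla u\|_2^2$, $[u]^2$, $\|u\|_2^2$, to which lower semicontinuity (with translation invariance) applies, and you should say so explicitly. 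Second, in the paper (f4) is needed only in the difference-quotient proof of the Poho\v{z}aev identity, while the regularity uses just (f1)--(f3); the paper also notes that $W^{2,q}_{\mathrm{loc}}$ regularity alone does not justify the naive test against $x\cdot\nabla v$ for the nonlocal term.
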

	\noindent Further, we prove the following symmetry result for the ground state solutions.
\begin{theorem}\label{Symmetry}
		Let $f \in C(\mathbb{R},\mathbb{R})$ be an odd function satisfying {\rm (f1)--(f4)}. Assume that $f$ does not change sign on $(0,\infty)$ and that $u \in H^1(\mathbb{R}^N)$ is a positive ground state solution of \eqref{1.1}. Then $u$ is radially symmetric.
	\end{theorem}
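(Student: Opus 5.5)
We will follow the polarization approach of Moroz--Van Schaftingen, adapted to the mixed operator. For a closed half-space $H\subset\mathbb{R}^N$ and the reflection $\sigma_H$ across $\partial H$, the polarization of $u$ is
\[
u^H(x)=\max\{u(x),u(\sigma_H x)\} \ \text{ for } x\in H,\qquad u^H(x)=\min\{u(x),u(\sigma_H x)\} \ \text{ for } x\notin H .
\]
The goal is to show that $u^H$ is again a ground state. We then want to conclude that for every $H$ either $u^H=u$ or $u^H=u\circ\sigma_H$. A classical characterization (Van Schaftingen--Willem, Bartsch--Weth--Willem) then says that $u$ is radial about some point.

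\textbf{Step 1: behaviour of the energy under polarization.} Since $u>0$, $f$ is odd and $f$ does not change sign on $(0,\infty)$, the function $F$ is monotone on $[0,\infty)$. Hence $F(u^H)=F(u)^H$ up to a sign; more precisely, $F(u)$ has a fixed sign and polarization commutes with monotone maps. We will check the following facts.
\begin{itemize}
\item $\|\nabla u^H\|_2=\|\nabla u\|_2$ and $\|u^H\|_2=\|u\|_2$, since polarization is a rearrangement-type operation.
\item The Gagliardo seminorm does not increase: $[u^H]_s\le [u]_s$. This is the standard pointwise inequality $|a^H(x)-a^H(y)|\le\ldots$ on each quadruple $\{x,\sigma_Hx,y,\sigma_Hy\}$, using that $|x-y|\le |x-\sigma_H y|$ for $x,y\in H$.
\item The Riesz-potential polarization inequality holds:
\[
\int (I_\alpha*F(u))F(u)\,dx\le \int (I_\alpha*F(u^H))F(u^H)\,dx ,
\]
with equality if and only if $F(u)^H\in\{F(u),F(u)\circ\sigma_H\}$. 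This uses the strict monotonicity of $I_\alpha$ in $|x|$.
\end{itemize}

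\textbf{Step 2: $u^H$ is a ground state.} From the previous sections we take the characterization of the ground state level $c$ as a mountain pass level, or as the minimum over the Pohožaev manifold, valid under (f1)--(f4) by Theorem~\ref{Theorem 2}. Write $J$ for the energy functional. For the path $t\mapsto u(\cdot/t)$, which attains the level $c$, the polarized path $t\mapsto u^H(\cdot/t)$ is admissible and satisfies
\[
J\bigl(u^H(\cdot/t)\bigr)\le J\bigl(u(\cdot/t)\bigr)\quad\text{for every } t .
\]
Hence the maximum along the polarized path is at most $c$, so $u^H$, or some rescaling of it, reaches the level $c$. By the min--max characterization, equality must hold in all the inequalities at the maximizing $t$. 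In particular $[u^H]_s=[u]_s$ and equality holds in the Riesz inequality.

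A subtlety is that the maximizing $t$ for $u$ is $t=1$. Equality at some possibly different $t$ still forces equality of the nonlocal term, since scaling commutes with polarization about hyperplanes through the origin. For general $H$ we first translate so that $\partial H$ passes through $0$.

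\textbf{Step 3: rigidity.} From equality in the Riesz polarization inequality we get $F(u)^H=F(u)$ or $F(u)^H=F(u)\circ\sigma_H$. We must pass from this to $u$ itself, and this is the main obstacle: $F$ may fail to be strictly monotone, for instance if $f$ vanishes on an interval. To handle it, we use the equality in the Gagliardo seminorm. Equality in the pointwise nonlocal inequality forces the following: for almost every pair $(x,y)\in H\times H$, the quadruple is not "crossed" unless the configuration is trivial. This yields that either $u\ge u\circ\sigma_H$ on $H$ or $u\le u\circ\sigma_H$ on $H$, which is exactly $u^H\in\{u,u\circ\sigma_H\}$. If this argument is delicate, the fallback is as follows. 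Positivity of $u$, together with (f3) and continuity, gives that $F(u)>0$ on an open set, and we would argue via the equation (regularity from the earlier sections, $u$ continuous) that $u^H$ solves \eqref{1.1}. Then $w=u^H-u\ge0$ on $H$ satisfies a linear mixed inequality, and the strong maximum principle for $-\Delta+(-\Delta)^s+1$ gives $w\equiv0$ or $w>0$.

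\textbf{Step 4: conclusion.} With the dichotomy established for every half-space, we invoke the characterization lemma to get that $u$ is radially symmetric (and radially nonincreasing) about some point $x_0$. This proves Theorem~\ref{Symmetry}.
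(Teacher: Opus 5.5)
Your proof is correct, and its decisive rigidity step takes a different route from the paper's.

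\textbf{Common part.} Both arguments polarize a path through $u$ that sits at the minimax level. The paper polarizes each point of the path from Lemma~\ref{lemma 4.3}; your rescaling about a point of $\partial H$ is equivalent. Lemma~\ref{lemma 4.3} gives strict inequality away from $u$, so the maximum of the polarized path is attained exactly at $u^H$. Hence $I(u^H)=I(u)=b$, and every polarization inequality becomes an equality. Your worry about a different maximizing $t$ is therefore moot.

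\textbf{The paper's rigidity step.} The paper extracts rigidity from the Riesz term. Equality in the polarized Hardy--Littlewood--Sobolev inequality gives $F(u)^H\in\{F(u),F(u)\circ\rho_H\}$. To get around flat pieces of $F$, it checks $f(u^H)=f(u)$. It then uses the deformation Lemma~\ref{lemma 4.4} to show $u^H$ is a critical point, and concludes by uniqueness for the linear problem $-\Delta w+(-\Delta)^s w+w=g$.

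\textbf{Your rigidity step.} Your primary Step 3 uses equality in the Gagliardo seminorm instead, and it works. Decomposing into quadruples $\{x,\sigma_Hx,y,\sigma_Hy\}$ with $x,y\in H$ gives
\[
[u]^2-[u^H]^2=C(N,s)\int_H\int_H\Bigl(\frac{1}{|x-y|^{N+2s}}-\frac{1}{|x-\sigma_H y|^{N+2s}}\Bigr)\max\bigl\{0,\,-\bigl(u(x)-u(\sigma_H x)\bigr)\bigl(u(y)-u(\sigma_H y)\bigr)\bigr\}\,dx\,dy,
\]
where the kernel difference is positive on $\mathrm{int}\,H\times\mathrm{int}\,H$. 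So $[u^H]=[u]$ forces $|P|\,|Q|=0$, where $P=\{x\in H:u(x)>u(\sigma_Hx)\}$ and $Q=\{x\in H:u(x)<u(\sigma_Hx)\}$. That is exactly $u^H\in\{u,u\circ\sigma_H\}$.

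You should write this computation out explicitly rather than describe it. Once it is there, you need none of the following:
\begin{itemize}
\item the equality case of the Riesz inequality;
\item criticality of $u^H$ (so your heading ``$u^H$ is a ground state'' overstates what you actually use);
\item the maximum-principle fallback.
\end{itemize}

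\textbf{What each route buys.} Your route is shorter and insensitive to where $f$ vanishes. However, it depends on the fractional part of the operator, because the Dirichlet integral is invariant under polarization. The paper's route is the Moroz--Van Schaftingen argument, which also covers the purely local Choquard equation.
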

  \noindent  We close this section by giving outline of the proofs. As described in \cite{Moroz2015existence}, since our problem is not scale invariant, a constrained minimization as used in \cite{Berestycki-Lions1} cannot be used to study our problem. So, motivated by \cite{Moroz2015existence}, we use the scaling techniques introduced by L. Jeanjean \cite{Jeanjean} to obtain the existence of a nontrivial weak solution to \eqref{1.1} under the assumptions \ref{f1}-\ref{f3} on the nonlinearity $f$.

\noindent Now to conclude that such a constructed solution is a ground    state, a Poho\v{z}aev identity is required, which further require some regularity on the solution. Again motivated by \cite{Moroz2015existence, Nidhi2025Normalized}, we address the issue of regularity by developing a counterpart of the critical Brezis-Kato regularity result for the mixed operator problem with general nonlinearity $f$. Still even after obtaining the $W_\text{loc}^{2,q}(\mb R^N)$, for every $q \geq 2$, we cannot use the standard test function as used in \cite{Moroz2015existence} to obtain the Poho\v{z}aev identity. This is due to the fact that the integration by parts formula for the nonlocal operator require global regularity on the solution see \cite{ambrosio,Cingolani-Gallo-Tanaka-Adv.-Nonlinear-Stud }. To overcome this difficulty, we followed the approach developed in \cite{Anthal2025Pohozaev}. We extend the proof of \cite{Anthal2025Pohozaev} to the case of Choquard type nonlinearity with general nonlinearity. This approach, however requires the additional assumption \ref{f4} on the nonlinearity $f$. These proofs of regularity results and the Poho\v{z}aev identity are the main novelty of this work, which are proved in the Appendix.

   \noindent Finally, motivated by Moroz and Schaftingen \cite{Moroz2015existence} who demonstrated the symmetry of the ground state solution, we prove the symmetry of the solutions . Thanks to the polarization properties associated with the Dirichlet and non-local integrals (refer to \cite[Lemma~5.4 and Lemma~5.5]{Moroz2015existence}), we established a relation between the Gagliardo semi-norm of the solution and its polarization, as demonstrated in Lemma \ref{Updated_Lemma 3.3.3}. This ultimately enabled us to demonstrate the radial symmetry of the ground state solution pertaining to our problem involving the fractional Laplacian operator as well.

\section{Functional setting and auxiliary results}
	We consider the Banach space $H^{1}(\mathbb{R}^N)$ consisting of measurable functions $u:\mathbb{R}^N\to\mathbb{R}$ such that $u$ and its weak derivatives $u_{x_i}$, $i=1,2,\ldots,N$ belongs to $L^2(\mathbb{R}^N)$, which is equipped with the following norm:
	$$\left\| u\right\|=(\left\| \nabla u\right\|_2^2+\left\| u\right\|_2^2+[u]^2)^{\frac{1}{2}},$$
	where  
    $$
    \|u\|_2^{2}:=\int_{\mathbb{R}^N}|u|^2\,dx,\quad \|\na u\|_2^{2}:=\int_{\mathbb{R}^N}|\na u|^2\,dx, 
    $$
    and 
	$$[u]^2:=\frac{C(N,s)}{2}\int_{\mathbb{R}^N}\int_{\mathbb{R}^N}\frac{|u(x)-u(y)|^2}{|x-y|^{N+2s}}\,dx dy.$$
	The notion of weak solution for \eqref{1.1} is as follows:
	\begin{defi}(Weak solution)
		A function $u\in H^{1}(\mathbb{R}^N)$ is said to be a weak solution of \eqref{1.1} if 
		\begin{equation}\label{weak_sol}
			\int_{\mathbb{R}^N} \nabla u\nabla v\,dx  +\ll u, v \gg + 
			\int_{\mathbb{R}^N}uv\,dx  = \int_{\mathbb{R}^N}(I_{\alpha}*F(u))f(u)v \,dx,
		\end{equation}
		for every $v\in H^{1}(\mathbb{R}^N)$,
		where 
		\begin{align}\label{edfp}
			\ll u,v \gg :=\frac{C(N,s)}{2}\int_{\mathbb{R}^N}\int_{\mathbb{R}^N}\frac{(u(x)-u(y))(v(x)-v(y))}{|x-y|^{N+2s}}\,dx dy.
		\end{align}
	\end{defi}

\begin{defi}(Ground state solution)
    We say that $u \in H^1(\mathbb{R}^N) \setminus \{0\}$ is a ground state solution of \eqref{1.1} if $u$ is a weak solution of \eqref{1.1} and
\begin{equation}\label{1.4}
I(u) = m := \inf \left\{ I(v) : v \in H^1(\mathbb{R}^N) \setminus \{0\} \text{ is a solution of } \eqref{1.1} \right\}.
\end{equation}
\end{defi}

	\noindent Defining 
    $$
    A(u):=\int_{\mathbb{R}^N}(I_{\alpha}*F(u))F(u) dx,
    $$
    the energy functional corresponding to the problem \eqref{1.1} is given by
	$$I(u)=\frac{1}{2}\left\|\nabla u \right\|_{2}^2+\frac{1}{2}\left\|  u\right\|_2^2+\frac{1}{2}[u]^2-\frac{1}{2}A(u),$$
	that is, a critical point of $I$ turns out to be the weak solution of \eqref{1.1}. \\
   \noindent The exponents $\frac{N+\alpha}{N}$ and $\frac{N+\alpha}{N-2}$ are called the lower and upper critical exponents respectively, with respect to the following Hardy-Littlewood-Sobolev inequality, see \cite[Theorem~4.3]{lieb2001analysis}:
	\begin{proposition}\label{prop1}
		Let $t,r>1$ and $0<\alpha <N$ with $1/t+1/r=1+\alpha/N$, $f\in L^t(\mathbb{R}^N)$ and $h\in L^r(\mathbb{R}^N)$. There exists a sharp constant $C(t,r,\alpha,N)$ independent of $f$ and $h$, such that
		\begin{equation}\label{co9}
			\int_{\mathbb{R}^N}\int_{\mathbb{R}^N}\frac{f(x)h(y)}{|x-y|^{N-\alpha}}\, {dx dy}\leq C(t,r,\alpha,N) \|f\|_{L^t(\mathbb{R}^N)}\|h\|_{{L^r(\mathbb{R}^N)}}.
		\end{equation}
		If $t=r=2N/(N+\alpha)$, then
		\begin{align}\label{C_alpha}
			C(t,r,\alpha,N)=C(N,\alpha)= \pi^{\frac{N-\alpha}{2}}\frac{\Gamma(\frac{\alpha}{2})}{\Gamma(\frac{N+\alpha}{2})}\left\lbrace \frac{\Gamma(\frac{N}{2})}{\Gamma(N)}\right\rbrace^{-\frac{\alpha}{N}}.
		\end{align}
		Equality holds in  \eqref{co9} if and only if ${f}/{h}\equiv constant$ and
		$\displaystyle h(x)= A(\gamma^2+|x-a|^2)^{-(N+\alpha)/2}$
		for some $A\in \mathbb{C}, 0\neq \gamma \in \mathbb{R}$ and $a \in \mathbb{R}^N$.
	\end{proposition}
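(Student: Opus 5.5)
This is the classical Hardy--Littlewood--Sobolev inequality, and the paper simply quotes \cite[Theorem~4.3]{lieb2001analysis}. If I were to prove it, I would split the argument into two parts: first the existence of some finite constant for all admissible $(t,r)$, then the identification of the sharp constant and the extremals in the diagonal case $t=r=2N/(N+\alpha)$.

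\emph{Existence of a constant.} By H\"older's inequality it suffices to bound the Riesz potential operator $T h:=|\cdot|^{-(N-\alpha)}*h$ from $L^r$ into $L^{t'}$, where $1/t'=1/r-\alpha/N$ (this is exactly the relation $1/t+1/r=1+\alpha/N$). I would use Hedberg's argument. Fix $x$ and a radius $R>0$, and split the integral defining $Th(x)$ into $|x-y|<R$ and $|x-y|\ge R$.
\begin{itemize}
\item On the near part, a dyadic decomposition gives a bound by $C R^{\alpha} Mh(x)$, where $M$ is the Hardy--Littlewood maximal function.
\item On the far part, H\"older's inequality gives a bound by $C R^{\alpha-N/r}\|h\|_r$; here one needs $r<N/\alpha$, which follows from $t>1$.
\end{itemize}
Optimizing in $R$ yields the pointwise estimate
\[
|Th(x)|\le C\,(Mh(x))^{r/t'}\|h\|_r^{1-r/t'} .
\]
Since $r>1$, the maximal theorem gives $\|Mh\|_r\le C\|h\|_r$, and hence $\|Th\|_{t'}\le C\|h\|_r$. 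An alternative route goes through weak-type bounds ($|x|^{-(N-\alpha)}\in L^{N/(N-\alpha),\infty}$), the weak Young inequality, and Marcinkiewicz interpolation.

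\emph{Sharp constant and extremals.} For $t=r=p:=2N/(N+\alpha)$, I would first reduce to nonnegative, symmetric-decreasing $f=h$. Replacing $f,h$ by $|f|,|h|$ can only increase the left side, and the Riesz rearrangement inequality (together with equimeasurability, which preserves the $L^p$ norms) shows the left side does not decrease under symmetric decreasing rearrangement. By the Cauchy--Schwarz inequality for the positive definite kernel $|x-y|^{-(N-\alpha)}$, one may take $f=h$.

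Next I would exploit the conformal invariance of the functional. Under stereographic projection $\mathbb{R}^N\to\mathbb{S}^N$ the problem becomes invariant under the conformal group. I would then run Lieb's argument (or the Carlen--Loss competing symmetries) to show that maximizers exist and that every maximizer is, up to conformal maps, constant on the sphere. Pulled back to $\mathbb{R}^N$, this is $h(x)=A(\gamma^2+|x-a|^2)^{-(N+\alpha)/2}$. Evaluating the double integral on this profile, via Funk--Hecke or the explicit Fourier transform of $|x|^{-(N-\alpha)}$, produces the Gamma-function constant $C(N,\alpha)$ displayed in \eqref{C_alpha}.

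\emph{Main obstacle.} The hardest step is the equality characterization, which has two delicate points.
\begin{itemize}
\item Existence of maximizers requires a compactness argument modulo translations and dilations, since the maximizing sequences are noncompact.
\item Showing that $f/h$ is constant and that only the conformal bubbles are extremal requires the strict form of the Riesz rearrangement inequality together with the competing-symmetries iteration.
\end{itemize}
I would follow Lieb's original argument closely at this point rather than reconstruct it.
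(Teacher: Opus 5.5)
Your outline is correct. The paper gives no proof of its own to compare against: it imports the statement from \cite[Theorem~4.3]{lieb2001analysis}, as you noted. Your Hedberg step checks out. The far-field H\"older bound needs $(N-\alpha)r'>N$, i.e.\ $r<N/\alpha$, which under $1/t+1/r=1+\alpha/N$ is exactly $t>1$. Optimizing in $R$ then gives the exponent $1-\alpha r/N=r/t'$.

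Compared with the cited source, your route differs mainly in the non-sharp part. Lieb--Loss derive the bound from the layer-cake representation, reducing to characteristic functions of level sets and estimating by their measures. That argument is self-contained and yields an explicit, non-optimal constant. Your maximal-function argument is shorter but relies on the Hardy--Littlewood maximal theorem.

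For the diagonal case, Lieb--Loss use the Carlen--Loss competing-symmetries iteration, which you mention only as an alternative. It drives any $f$ to a bubble without decreasing the functional, so it avoids the separate compactness and existence-of-maximizers step you flag as an obstacle. The characterization of the equality cases still needs the strict Riesz inequality together with conformal invariance, as you say.

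One caveat concerns the statement rather than your proof. The paper drops the absolute value that Lieb--Loss place on the left side of \eqref{co9}. The ``if'' half of the equality clause holds only in that absolute-value form; it fails, for example, for $f=-h$ or for complex $A$. That form is what your reduction to $|f|,|h|$ actually establishes.
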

     \noindent From the inequality \eqref{co9}, it follows that for any $u\in H^1(\mathbb{R}^N)$
	\begin{align*}
		{\mathcal{A}_q(u):=	\int_{\mathbb R^N}\int_{\mathbb R^N}\frac{|u(x)|^{q}|u(y)|^{q}}{|x-y|^{N-\alpha}}}{ dx dy}
	\end{align*}
	is well defined if $\frac{N+\alpha}{N}\leq q \leq \frac{N+\alpha}{N-2}$.\\
    \noindent It is standard to check using Hardy-Littlewood-Sobolev inequality that if $f \in C(\mb R, \mb R)$ satisfies growth condition \ref{f1}, then the energy functional $I$ defines on the Sobolev space $H^1(\mb R^N)$ is a continuously differentiable functional.
    
\subsection*{Notations}
	We will be using the following notations throughout the {rest of the paper unless otherwise mentioned:}
	\begin{itemize}
		\item For $\Sigma = \{\rho\in C([0,1]; H^1(\mathbb{R}^N)): \rho(0)=0 \text{ and } I(\rho(1))<0\}${\color{blue},} we define $$b := \inf_{\rho\in \Sigma}\sup_{z\in [0,1]}I(\rho(z)).$$
		\item For $u,v \in H^1(\mathbb{R}^N)$, we set 
		\begin{align*}
			\langle u, v\rangle = \int_{\mathbb{R}^N} \nabla u\nabla v\,dx +\ll u, v \gg + 
			\int_{\mathbb{R}^N}uv\,dx
		\end{align*}
		and $\ll \cdot,\cdot\gg$ is defined by \eqref{edfp}.
        \item For $p\geq 1$, $\|\cdot\|_p$ denotes the standard norm on the Banach space $L^p(\mb R^N)$ defined by{
        $$
        \|u\|_p=\left(\int_{\mathbb{R}^N}|u|^p\,dx\right)^\frac{1}{p},\quad \forall u\in L^p(\mathbb{R}^N).
        $$}
        \item $2_s^*=\frac{2N}{N-2s}$ for $0<s\leq 1$ with $N>2s$.

        \item Let $B_R(x)$ denote the open ball of radius $R>0$ centered at $x \in \mathbb{R}^N$.

\item The symbols $C$ and $C_i$ ($i \in \mathbb{N}^+$) denote positive constants whose values may vary from line to line.

\item The symbols $\to$ and $\rightharpoonup$ denote strong convergence and weak convergence, respectively.
	\end{itemize}

\section{Preliminaries for the proof of the main results}\label{Existence}
\subsection{Preliminaries for the proof of Theorem \ref{Theorem 1}}
Define the Poho\v{z}aev functional $P:H^{1}(\mathbb{R}^N)\rightarrow \mathbb{R}$ as:
	\begin{align*}
		P(u):= \left(\frac{N-2}{2}\right)\left\| \nabla u \right\|_2^2+\left(\frac{N-2s}{2}\right)[u]^2+\frac{N}{2}\left\| u \right\|_2^2-\left(\frac{N+\alpha}{2}\right)A(u).
	\end{align*}
	\begin{lemma}\label{Lemma 4.1}
		If $f \in C(\mathbb{R},\mathbb{R})$ satisfies {\rm (f1)} and {\rm (f3)}, then there exists a sequence $\{u_n\}_{n \in \mathbb{N}} \subset H^1(\mathbb{R}^N)$ such that
\begin{enumerate}
  \item\label{Lemma_4.1_1} $I(u_n) \to b > 0$ as $n \to \infty$,
  \item\label{Lemma_4.1_2} $I'(u_n) \to 0$ strongly in the dual space of $H^1(\mathbb{R}^N)$ as $n \to \infty$,
  \item\label{Lemma_4.1_3} $P(u_n) \to 0$ as $n \to \infty$.
\end{enumerate}
	\end{lemma}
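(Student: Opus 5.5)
The plan follows Jeanjean's augmented functional trick, as adapted by Moroz and Van Schaftingen. On $\mathbb{R}\times H^1(\mathbb{R}^N)$ (with norm $(|\sigma|^2+\|u\|^2)^{1/2}$) I define $\tilde I(\sigma,u):=I(u(e^{-\sigma}\cdot))$. Scaling each term gives
\begin{align*}
\tilde I(\sigma,u)=\frac{e^{(N-2)\sigma}}{2}\|\nabla u\|_2^2+\frac{e^{(N-2s)\sigma}}{2}[u]^2+\frac{e^{N\sigma}}{2}\|u\|_2^2-\frac{e^{(N+\alpha)\sigma}}{2}A(u).
\end{align*}
This functional is $C^1$. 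Moreover $\partial_\sigma\tilde I(\sigma,u)=P(u(e^{-\sigma}\cdot))$ and $\partial_u\tilde I(\sigma,u)[v]=I'(u(e^{-\sigma}\cdot))[v(e^{-\sigma}\cdot)]$.

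\textbf{Mountain pass geometry for $I$, giving $b>0$.} By (f1) and Proposition~\ref{prop1} with $t=r=2N/(N+\alpha)$,
\[
|A(u)|\le C\bigl(\|u\|_2^{2(N+\alpha)/N}+\|u\|_{2^*_1}^{2(N+\alpha)/(N-2)}\bigr)\le C\bigl(\|u\|^{2(N+\alpha)/N}+\|u\|^{2(N+\alpha)/(N-2)}\bigr),
\]
using Sobolev. Both exponents exceed $2$, so $I(u)\ge \frac14\|u\|^2$ on a small sphere $\|u\|=\rho$. Hence every path in $\Sigma$ crosses this sphere, and $b\ge\rho^2/4>0$.

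\textbf{$\Sigma\neq\emptyset$.} By (f3), as in Moroz--Van Schaftingen, I take $w=t_0\chi$ with $\chi$ a smooth cutoff, equal to $1$ on a large ball, so that $A(w)>0$. The ball must be large because the Riesz kernel makes the main part dominate; the transition region is controlled by $F$ being continuous and bounded on $[-|t_0|,|t_0|]$. Then $I(w(\cdot/\tau))$ is a combination of $\tau^{N-2},\tau^{N-2s},\tau^{N}$ minus $\tau^{N+\alpha}A(w)/2$, so it tends to $-\infty$ as $\tau\to\infty$. The path $z\mapsto w(\cdot/(z\tau))$, with $z=0$ giving $0$ since the $H^1$ norm tends to $0$ as $\tau\to0$, lies in $\Sigma$. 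For $\tilde I$ I define $\tilde\Sigma$ analogously, with paths $\tilde\rho(0)=(0,0)$ and $\tilde I(\tilde\rho(1))<0$, and let $\tilde b$ be its minimax level.

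\textbf{$\tilde b=b$.} Since $\Sigma\times\{0\}\subset\tilde\Sigma$, we get $\tilde b\le b$. Conversely, given $\tilde\rho=(\sigma,\rho)$, the path $z\mapsto\rho(z)(e^{-\sigma(z)}\cdot)$ lies in $\Sigma$ with the same values of $I$, so $b\le\tilde b$.

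\textbf{Producing the sequence.} I choose paths $\rho_n\in\Sigma$ with $\sup I\circ\rho_n\le b+1/n$ and apply Ekeland's variational principle, in the form of the general minimax principle (e.g.\ Willem, Thm.~2.8), to $\tilde I$ on $\tilde\Sigma$ with starting paths $(0,\rho_n)$. This yields $(\sigma_n,v_n)$ with $|\sigma_n|\le 2/\sqrt n$, $\tilde I(\sigma_n,v_n)\to b$, and $\tilde I'(\sigma_n,v_n)\to0$ in the dual of $\mathbb{R}\times H^1$. Setting $u_n:=v_n(e^{-\sigma_n}\cdot)$ gives $I(u_n)=\tilde I(\sigma_n,v_n)\to b$, and $P(u_n)=\partial_\sigma\tilde I(\sigma_n,v_n)\to0$. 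For $h\in H^1$, testing $\partial_u\tilde I$ with $h(e^{\sigma_n}\cdot)$ gives
\[
|I'(u_n)[h]|\le o(1)\,\|h(e^{\sigma_n}\cdot)\|\le o(1)\,C\|h\|,
\]
with $C$ uniform because $\sigma_n\to0$. Hence $I'(u_n)\to0$.

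The main obstacle I expect is this last step: carrying out Ekeland/the deformation argument on $\tilde\Sigma$ with the required localization near $\{\sigma=0\}$. Controlling $|\sigma_n|$ is exactly what makes the dilation $h\mapsto h(e^{\sigma_n}\cdot)$ uniformly bounded on $H^1$, including the Gagliardo part, which scales like $e^{-(N-2s)\sigma_n/2}$. A secondary technical point is checking that $\tilde I$ is $C^1$ jointly in $\sigma$; this follows from the explicit exponential formula and the $C^1$ property of $A$.
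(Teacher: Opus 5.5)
Your proposal is correct and follows the same Jeanjean-type argument as the paper: the same augmented functional $I\circ\Phi$ with $\Phi(\sigma,v)=v(e^{-\sigma}\cdot)$, the same minimax classes with $b=\tilde b$, and Willem's minimax principle. The only cosmetic difference is that you get $A(w)>0$ from a smooth cutoff equal to $t_0$ on a large ball, rather than from $t_0\chi_{B_1}$ plus a density argument.

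Your use of the localized form of the principle (Willem, Thm.~2.8) to obtain $|\sigma_n|\le 2/\sqrt n$ is more rigorous than the paper. That uniform bound on the dilations $h\mapsto h(e^{\sigma_n}\cdot)$ is exactly what is needed to upgrade $(I\circ\Phi)'(\sigma_n,v_n)\to 0$ to $I'(u_n)\to 0$ in the dual norm, a step the paper's write-up leaves implicit.
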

	\begin{proof}
		\textbf{Claim 1:} $b<+\infty$.\\
		For $t_0\in \mathbb{R}$ as in \ref{f3}, we set {$\omega:=t_{0}\chi_{B_1(0)}$, where $\chi_{B_1(0)}$ is the characteristic function and $B_1(0)$ is the unit ball with center $0$}. Clearly $w\in L^2(\mathbb{R}^N)\cap L^{2^*}(\mathbb{R}^N)$ and 
		\begin{align*}
			A(\omega)=F(t_0)^2\int_{B_1}\int_{B_1}\frac{A_{N,\alpha}}{|x-y|^{N-\alpha}}\,dx dy>0.
		\end{align*}
		Using {\rm (f1)}, one can easily see that $A$ is continuous in $L^2(\mathbb{R}^N)\cap L^{2^*}(\mathbb{R}^N)$, therefore by density  of $H^1(\mathbb{R}^N)$ in $L^2(\mathbb{R}^N)\cap L^{2^*}(\mathbb{R}^N)$, we can find $v\in H^1(\mathbb{R}^N)$ such that $A(v)>0$. Now, for a fixed $\tau>0$, define $u_{\tau}(x):=v(\frac{x}{\tau})$, which gives us:
		\begin{equation*}
			I(u_{\tau})  =  \frac{\tau^{N-2}}{2}\left\| \nabla v \right\|_2^2+\frac{\tau^{N-2s}}{2}[v]^2+\frac{\tau^N}{2}\left\| v \right\|_2^2-\frac{\tau^{N+\alpha}}{2}A(v) < 0 \text{ for large } \tau>0.
		\end{equation*}
		Thus, there exists some $u_{\tau_0}\in H^1(\mathbb{R}^N)$ such that $I(u_{\tau_0})<0$. Let us define a path in $\Sigma$ with the help of this $u_{\tau_0}$. Define $\rho: [0,1]\rightarrow H^1(\mathbb{R}^N)$ such that $\rho(t):= tu_{\tau_0}$. Clearly $\rho\in \Sigma$  and so $b<+\infty$.\\
		\textbf{Claim 2:} $b>0$.\\
		By Proposition \ref{prop1}, and the hypothesis {\rm (f1)} and the Sobolev inequality, {there exists a constant $C>0$ such that}
		\begin{eqnarray*}
			A(u) & = & \int_{\mathbb{R}^N}(I_{\alpha}*F(u))F(u)\,dx\leq C\left(\int_{\mathbb{R}^N}|F(u)|^{\frac{2N}{N+\alpha}}\,dx\right)^{\frac{N+\alpha}{N}}\\
			& \leq & C \left(\int_{\mathbb{R}^N}||u|^{\frac{N+\alpha}{N}}+|u|^{\frac{N+\alpha}{N-2}}|^{\frac{2N}{N+\alpha}}\,dx\right) ^{\frac{N+\alpha}{N}} \leq C_2 \left(\int_{\mathbb{R}^N}(|u|^2+|u|^{\frac{2N}{N-2}})\,dx\right)^{\frac{N+\alpha}{N}}\\
			& \leq & C\left(\left(\int_{\mathbb{R}^N}|u|^2\,dx\right)^{\frac{N+\alpha}{N}}+\left(\int_{\mathbb{R}^N}|u|^{2^*}\,dx\right)^{\frac{N+\alpha}{N}}\right) \leq C\left(\left\| u \right\|_2^{2(\frac{N+\alpha}{N})}+\left\| \nabla u \right\|_2^{2(\frac{N+\alpha}{N-2})}\right)\\
			& \leq & C\left(\left\| u \right\|_2^{\frac{2\alpha}{N}}\left\| u \right\|_2^2+\left\| \nabla u \right\|_2^{2(\frac{\alpha+2}{N-2})}\left\| \nabla u \right\|_2^2+[u]^2\right). 
		\end{eqnarray*}
		Clearly, we can find $\delta>0$ such that, 
		\begin{equation*}
			A(u)\leq \frac{1}{4}\left(\left\| u \right\|_2^2+\left\| \nabla u \right\|_2^2+[u]^2\right) \text{ for all } u\in H^1(\mathbb{R}^N) \text{ with } \left\| u \right\|^2\leq\delta.
		\end{equation*}
		Thus for all $u\in H^1(\mathbb{R}^N)$ with $\left\| u \right\|^2\leq\delta$,
		\begin{equation}\label{2.1}
			I(u)\geq \frac{1}{4}\left(\left\| u \right\|_2^2+\left\| \nabla u \right\|_2^2+[u]^2\right).
		\end{equation}
		Now, for any $\rho\in \Sigma$, we have:
		$$\left\| \rho(0)\right\|^2=0<\delta < \left\| \rho(1)\right\|^2,$$
		then, by intermediate value theorem, we can find $\bar{\tau}\in (0,1)$ such that 
		\begin{align*}
			\left\| \rho(\bar{\tau})\right\|^2=\left\| \nabla\rho(\bar{\tau})\right\|_2^2+\left\| \rho(\bar{\tau})\right\|_2^2+[\rho(\bar{\tau})]^2=\delta.
		\end{align*}
		Hence by \eqref{2.1}, $I(\rho(\bar{\tau}))\geq \frac{\delta}{4}$, and so
		\begin{align*}
			\frac{\delta}{4}\leq I(\rho(\bar{\tau}))\leq \sup_{\tau\in [0,1]}I(\rho(\tau)).
		\end{align*}
		Since $\rho\in \Sigma$ is arbitrary, we get:
		\begin{align*}
			b\geq \frac{\delta}{4}>0.
		\end{align*}
		Now we define $\Phi :\mathbb{R}\times H^1(\mathbb{R}^N)\rightarrow H^1(\mathbb{R}^N)$ such that $\Phi(t,v)(x)=v(e^{-t}x)$ for all $x\in \mathbb{R}^N$, $t\in \mathbb{R}$ and $v\in H^1(\mathbb{R}^N)$. Then,
		\begin{align*}
			I(\Phi(t,v)) = \frac{e^{(N-2)t}}{2}\left\| \nabla v \right\|_2^2+\frac{e^{Nt}}{2}\left\| v \right\|_2^2+\frac{e^{(N-2s)t}}{2}[v]^2-\frac{e^{(N+\alpha)t}}{2}A(v).
		\end{align*}
		Setting,
		$$\tilde{b}:=\inf_{\tilde{\rho}\in \tilde{\Sigma}}\sup_{\tau\in [0,1]}I(\Phi(\tilde{\rho}(\tau))),$$
		where, $\tilde{\Sigma}=\{ \tilde{\rho}\in C([0,1]; \mathbb{R}\times H^1(\mathbb{R}^N)): \tilde{\rho}(0)=(0,0) \text{ and } {(I\circ\Phi)(\tilde{\rho}(1))<0
        }\},$
		then we claim that $b=\tilde{b}$.
        
        
        Thus, by \cite[Theorem 2.9]{Willem2012Minimax} there exists a sequence $\{(t_n,v_n)\}_{n\in\mathbb{N}}\subset \mathbb{R}\times H^1(\mathbb{R}^N)$ such that
		\begin{equation}\label{2.2}
			\{(I\circ \Phi)(t_n,v_n)\}\rightarrow b \text{ and } (I\circ \Phi)'(t_n,v_n)\rightarrow 0 \text{ as } n\rightarrow \infty.
		\end{equation}
		Now, for any $(t,v)\in \mathbb{R}\times H^1(\mathbb{R}^N)$, we have:
		\begin{eqnarray}\label{2.3}
			0 & = & \lim_{n\rightarrow \infty}(I\circ \Phi)'(t_n, v_n)(t,v)\nonumber\\
			& = & \lim_{n\rightarrow \infty}\left( e^{(N-2)t_n}\int_{\mathbb{R}^N}\nabla v_n\nabla v\,dx+e^{(N-2s)t_n}\ll v_n,v \gg +e^{Nt_n}\int_{\mathbb{R}^N}v_nv\,dx\right.\nonumber\\
			&& -e^{(N+\alpha)t_n}A'(v_n)(v)+t\left(\frac{(N-2)e^{(N-2)t_n}}{2}\left\| \nabla v_n \right\|_2^2 +\frac{(N-2s)e^{(N-2s)t_n}}{2}[v_n]^2\right.\nonumber\\
			&&\left. \left.+\frac{N}{2}e^{Nt_n}\left\| v_n\right\|_2^2-\frac{(N+\alpha)e^{(N+\alpha)t_n}}{2}A(v_n)\right)\right)\nonumber\\
			& = & \lim_{n\rightarrow \infty}\left(I'(\Phi(t_n,v_n))(\Phi(t,v))+ t P(\Phi(t_n,v_n))\right).
		\end{eqnarray}
		For $n \in \mathbb{N}$, set $u_n := \Phi(t_n, v_n)$. Then, by \eqref{2.2} and \eqref{2.3}, we obtain
        $$I(u_n) \to b;I'(u_n) \to 0\text{ strongly in the dual space of } H^1(\mathbb{R}^N);\text{ and }P(u_n) \to 0;$$
as $n \to \infty$.
		This completes the proof.
	\end{proof}
	\noindent Next, we will be using the above-constructed sequence to prove the existence of a weak solution.
	\begin{lemma}\label{Lemma 4.2}
		Let $f \in C(\mathbb{R},\mathbb{R})$ satisfy {\rm (f1)}, {\rm (f2)}, and {\rm (f3)}, and let $\{u_n\}_{n \in \mathbb{N}}$ be the sequence constructed in Lemma~\ref{Lemma 4.1}. Then one of the following alternatives holds:
\begin{enumerate}
  \item[(a)] up to a subsequence, $u_n \to 0$ strongly in $H^1(\mathbb{R}^N)$ as $n \to \infty$;
  \item[(b)] there exist a function $u \in H^1(\mathbb{R}^N) \setminus \{0\}$ such that $I'(u)=0$ and a sequence $\{y_n\}_{n \in \mathbb{N}} \subset \mathbb{R}^N$ satisfying
  \[
  u_n(\cdot - y_n) \rightharpoonup u \quad \text{weakly in } H^1(\mathbb{R}^N)
  \quad \text{as } n \to \infty.
  \]
\end{enumerate}
	\end{lemma}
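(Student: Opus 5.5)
We follow the concentration-compactness dichotomy of \cite{Moroz2015existence}, adapted to the mixed operator. First we show that $\{u_n\}$ is bounded in $H^1(\mathbb{R}^N)$. Combining $I(u_n)\to b$ and $P(u_n)\to 0$ eliminates $A(u_n)$:
\begin{align*}
(N+\alpha)I(u_n)-P(u_n)=\frac{\alpha+2}{2}\|\nabla u_n\|_2^2+\frac{\alpha+2s}{2}[u_n]^2+\frac{\alpha}{2}\|u_n\|_2^2 \to (N+\alpha)b .
\end{align*}
Hence $\|u_n\|$ is bounded, and by \eqref{co9} and (f1) so is $A(u_n)$.

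\textbf{Vanishing case.} Suppose that
\begin{align*}
\lim_{n\to\infty}\sup_{y\in\mathbb{R}^N}\int_{B_1(y)}|u_n|^2\,dx=0 .
\end{align*}
Lions' lemma then gives $u_n\to 0$ in $L^p(\mathbb{R}^N)$ for every $p\in(2,2^*)$. We claim that
\begin{align*}
\int_{\mathbb{R}^N}(I_\alpha*F(u_n))f(u_n)u_n\,dx\to 0 .
\end{align*}
By \eqref{co9}, $\|I_\alpha*F(u_n)\|_{2N/(N-\alpha)}\le C\|F(u_n)\|_{2N/(N+\alpha)}$, which is bounded. It therefore suffices to show $\|f(u_n)u_n\|_{2N/(N+\alpha)}\to 0$. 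Condition (f1) only bounds $|tf(t)|$ by the two critical powers, so this step uses the refined smallness built into (f2). We split $tf(t)$ (or, as in \cite{Moroz2015existence}, argue via $F$) into a part near $t=0$, a part near $|t|=\infty$, and an intermediate range. The end pieces are made small by (f2) together with uniform $L^2$ and $L^{2^*}$ bounds. The intermediate range is controlled by $|t|^p$ with $p\in(2,2^*)$ after raising to the power $2N/(N+\alpha)$, and this tends to zero by Lions' lemma.

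This is the delicate point, since (f2) is stated for $F$ and not for $tf(t)$. If the direct splitting fails, we follow \cite{Moroz2015existence}: we first show $A(u_n)\to 0$, which uses only $F$ and (f2), and then use $P(u_n)\to 0$ to get $\frac{N-2}{2}\|\nabla u_n\|_2^2+\frac{N-2s}{2}[u_n]^2+\frac{N}{2}\|u_n\|_2^2\to 0$. This directly gives $u_n\to 0$ in $H^1(\mathbb{R}^N)$, which is alternative (a). In the first route, $I'(u_n)u_n=\|u_n\|^2-\int(I_\alpha*F(u_n))f(u_n)u_n\to 0$ gives the same conclusion.

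\textbf{Non-vanishing case.} Otherwise there are $\delta>0$ and $y_n\in\mathbb{R}^N$ with $\int_{B_1(y_n)}|u_n|^2\ge\delta$ along a subsequence. Set $v_n:=u_n(\cdot-y_n)$, adjusting the sign convention of the translation as needed. By translation invariance of all the terms, $\{v_n\}$ is bounded, $I(v_n)\to b$ and $I'(v_n)\to 0$. Up to a subsequence, $v_n\rightharpoonup u$ in $H^1$, $v_n\to u$ in $L^2_{\mathrm{loc}}$ and a.e. Rellich on $B_1(0)$ gives $\int_{B_1(0)}|u|^2\ge\delta$, so $u\neq0$.

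It remains to show $I'(u)=0$, that is, to pass to the limit in $I'(v_n)\varphi$ for $\varphi\in C_c^\infty(\mathbb{R}^N)$; density then extends this to all of $H^1(\mathbb{R}^N)$. The quadratic terms $\int\nabla v_n\nabla\varphi$, $\ll v_n,\varphi\gg$ and $\int v_n\varphi$ converge by weak convergence, since each is a bounded linear functional on $H^1(\mathbb{R}^N)$. For the nonlinear term:
\begin{enumerate}
\item $F(v_n)$ is bounded in $L^{2N/(N+\alpha)}$ and $F(v_n)\to F(u)$ a.e., so $F(v_n)\rightharpoonup F(u)$ weakly in $L^{2N/(N+\alpha)}$.
\item Since $I_\alpha*\cdot$ is bounded from $L^{2N/(N+\alpha)}$ to $L^{2N/(N-\alpha)}$, it follows that $I_\alpha*F(v_n)\rightharpoonup I_\alpha*F(u)$ weakly in $L^{2N/(N-\alpha)}$.
\item By (f1), local compactness and Vitali's theorem, $f(v_n)\varphi\to f(u)\varphi$ strongly in $L^{2N/(N+\alpha)}$ on the support of $\varphi$.
\end{enumerate}
Pairing a weakly convergent sequence with a strongly convergent one yields $\int(I_\alpha*F(v_n))f(v_n)\varphi\to\int(I_\alpha*F(u))f(u)\varphi$. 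Hence $I'(u)=0$, which is alternative (b).

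The main obstacle is the vanishing step, namely extracting convergence to zero of the Choquard term from the critical-growth bounds, where (f2) is essential.
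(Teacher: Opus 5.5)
Your argument is correct and is essentially the paper's. Boundedness comes from $(N+\alpha)I(u_n)-P(u_n)$. Vanishing forces $A(u_n)\to0$ via (f2) and Lions' lemma, which together with $P(u_n)\to0$ yields (a); the paper phrases this as a contradiction with $\liminf A(u_n)>0$ when (a) fails. Non-vanishing yields a nontrivial translated weak limit.

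You should drop the first route in the vanishing step, since (f1)--(f2) give no smallness of $tf(t)$ relative to the critical powers: an oscillating $F$ can defeat it, so only the route via $A(u_n)\to0$ works. Your Rellich check that $u\neq0$ and your Vitali/weak--strong pairing argument for $I'(u)=0$ actually justify steps the paper only asserts from $I\in C^1$.
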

	\begin{proof}
		Suppose ($a$) does not hold. Then we have 
		\begin{align}\label{e2.4}
			\liminf\limits_{n \rightarrow \infty} \left\| u_n \right\|^2>0. 
		\end{align}
		\textbf{Claim 1:} $\{u_n\}_{n\in\mathbb{N}}$ is bounded in $H^1(\mathbb{R}^N)$.\\
		By Lemma \ref{Lemma 4.1}, since $\{I(u_n)\}_{n\in\mathbb{N}}$ and $\{P(u_n)\}_{n\in\mathbb{N}}$ are convergent, there exists a constant $\beta>0$ independent of $n$ such that
		\begin{eqnarray*}
			\beta & \geq & I(u_n)-\frac{P(u_n)}{(N+\alpha)}= \left(\frac{\alpha+2}{2(N+\alpha)}\right)\left\| \nabla u_n \right\|_2^2+\left(\frac{\alpha+2s}{2(N+\alpha)}\right)[u_n]^2+\frac{\alpha}{2(N+\alpha)}\left\| u_n \right\|_2^2\\
			& \geq & \frac{\alpha}{2(N+\alpha)}\left(\left\| \nabla u_n \right\|_2^2+[u_n]^2+\left\| u_n \right\|_2^2\right)= \frac{\alpha \left\| u_n \right\|^2}{2(N+\alpha)},
		\end{eqnarray*}
        {which proves Claim 1.}\\
		\textbf{Claim 2:} $ \displaystyle\liminf_{n\rightarrow \infty} \sup_{z\in \mathbb{R}^N}\int_{B_1(z)}|u_n|^p\,dx>0$ for all $p\in (2,2^*)$.\\
		If possible, suppose 
		\begin{align}\label{e2.5}
			\liminf_{n\rightarrow \infty} \sup_{z\in \mathbb{R}^N}\int_{B_1(z)}|u_n|^p\,dx=0~\text{ for some}~ p\in (2,2^*).
		\end{align}
		Now, since $\displaystyle \lim_{n\rightarrow \infty}P(u_n)=0$, using \eqref{e2.4}, we have:
		\begin{eqnarray}\label{2.4}
			\liminf_{n\rightarrow \infty} A(u_n) & = & \liminf_{n\rightarrow \infty}\left(\left(\frac{N-2}{N+\alpha}\right)\left\| \nabla u_n \right\|_2^2+\left(\frac{N-2s}{N+\alpha}\right)[u_n]^2+\frac{N}{N+\alpha}\left\| u_n \right\|_2^2\right)\nonumber\\
			& \geq & \liminf_{n\rightarrow \infty}\left(\frac{N-2}{N+\alpha}\right)\left\| u_n \right\|^2>0.
		\end{eqnarray}
		By \cite[Lemma 2.3]{Moroz2013groundstates} and boundedness of $\{u_n\}_{n\in\mathbb{N}}$ in $H^1(\mathbb{R}^N)$, we have:
		\begin{eqnarray*}
			\left\|u_n \right\|_p^p  & \leq & \tilde{C}\left(\sup_{z\in \mathbb{R}^N}\int_{B_1(z)}|u_n|^p\,dx\right)^{\frac{p-2}{p}}\left(\left\| \nabla u_n \right\|_2^2+\left\| u_n \right\|_2^2\right)\\
			& \leq & \tilde{C}\left(\sup_{z\in \mathbb{R}^N}\int_{B_1(z)}|u_n|^p\,dx\right)^{\frac{p-2}{p}}\left(\left\| \nabla u_n \right\|_2^2+\left\| u_n \right\|_2^2+[u_n]^2\right)\\
			& \leq & C' \left(\sup_{z\in \mathbb{R}^N}\int_{B_1(z)}|u_n|^p\,dx\right)^{\frac{p-2}{p}}.
		\end{eqnarray*}
		Also, by \ref{f2} and continuity of $F$, for any $\epsilon>0$ we can find $C_{\epsilon}>0$ such that
		$$|F(t)|^{\frac{2N}{N+\alpha}}\leq \epsilon(|t|^{2}+|t|^{2^*})+C_{\epsilon}|t|^p,$$
		and hence,
		\begin{eqnarray*}
			\liminf_{n\rightarrow \infty}\int_{\mathbb{R}^N}|F(u_n)|^{\frac{2N}{N+\alpha}}\,dx & \leq & \liminf_{n\rightarrow \infty}\left(\epsilon(\left\| u_n \right\|_2^2+\left\| u_n \right\|_{2^*}^{2^*})+C_{\epsilon}\left\| u_n \right\|_p^p\right)\\
			& \leq & \epsilon C_1+C_{\epsilon}C'\liminf_{n\rightarrow \infty} \left(\sup_{z\in \mathbb{R}^N}\int_{B_1(z)}|u_n|^p\,dx\right)^{\frac{p-2}{p}}.
		\end{eqnarray*}
		Since $\epsilon>0$ is arbitrary, using \eqref{e2.5} we get $\displaystyle \liminf_{n\rightarrow \infty}F(u_n)=0$ and hence by Proposition \ref{prop1}, $\displaystyle \liminf_{n\rightarrow \infty}A(u_n)=0$. This is a contradiction to \eqref{2.4}, and hence Claim 2 must hold.
        Thus by \cite[Lemma I.1]{Lions1984concentration} we can find a sequence $\{y_n\}_{n\in\mathbb{N}}\subset \mathbb{R}^N$ and a constant $\delta>0$ such that
        $$\int_{B_1(0)}|u_n(x-y_n)|^2\geq \delta>0.$$

        Now, denoting $v_n:=u_n(.-y_n)$, clearly $\{v_n\}_{n\in\mathbb{N}}$ is bounded in $H^1(\mathbb{R}^N)$ due to the boundedness of $\{u_n\}_{n\in\mathbb{N}}$, and hence there exists $u\in H^1(\mathbb{R}^N)$ such that $v_n \rightharpoonup u$, weakly in $H^1(\mathbb{R}^N)$, up to subsequence. Moreover, since $I$ is continuously differentiable, for any $\phi\in H^1(\mathbb{R}^N)$, if we denote $\phi_n(x):=\phi(x+y_n)$ then
        $$I'(u)(\phi)=\lim_{n\rightarrow \infty}I'(v_n)(\phi)=\lim_{n\rightarrow \infty} I'(u_n)(\phi_n)=0,$$
        by property \ref{Lemma_4.1_2} of Lemma \ref{Lemma 4.1}.
        This completes the proof. 
	\end{proof}
	
\subsection{Preliminaries for the proof of Theorem \ref{Theorem 2}}
In this section, we will be using the Poho\v{z}aev identity to show that the weak solutions obtained in Theorem \ref{Theorem 1} with $f$ satisfying the additional assumption \ref{f4} is a ground state solution of \eqref{1.1}.
	\begin{lemma}\label{lemma 4.3}
		If $f\in C(\mathbb{R};\mathbb{R})$ satisfies \ref{f1} and $u\in H^1(\mathbb{R}^N)\setminus \{0\}$ is a weak solution of \eqref{1.1}, then we can find a path $\rho \in \Sigma$ such that 
		\begin{itemize}
			\item $\rho(1/2)=u;$
			\item $I(\rho(t))<I(u)$ for all $t\in [0,1]\setminus\{1/2\}$.
		\end{itemize}
	\end{lemma}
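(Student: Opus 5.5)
The plan is to use the dilation path $u_\tau(x):=u(x/\tau)$, which is how Moroz and Van Schaftingen proceed, together with the Poho\v{z}aev identity $P(u)=0$ for weak solutions of \eqref{1.1}. That identity is the regularity/Poho\v{z}aev result announced in the introduction and proved in the Appendix, and I take it as given here.

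Write $a:=\|\nabla u\|_2^2$, $b_0:=[u]^2$, $c:=\|u\|_2^2$. Since $u\neq 0$, we have $a,b_0,c>0$. From $P(u)=0$,
\[
\frac{N+\alpha}{2}A(u)=\frac{N-2}{2}a+\frac{N-2s}{2}b_0+\frac{N}{2}c>0,
\]
so $A(u)>0$. Changing variables, as in Claim 1 of Lemma \ref{Lemma 4.1}, gives
\[
g(\tau):=I(u_\tau)=\frac{\tau^{N-2}}{2}a+\frac{\tau^{N-2s}}{2}b_0+\frac{\tau^{N}}{2}c-\frac{\tau^{N+\alpha}}{2}A(u),\qquad \tau>0 .
\]

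The key step is to show that $\tau=1$ is the strict global maximum of $g$ on $(0,\infty)$. Compute
\[
2\tau^{1-N}g'(\tau)=(N-2)a\tau^{-2}+(N-2s)b_0\tau^{-2s}+Nc-(N+\alpha)A(u)\tau^{\alpha}=:h(\tau).
\]
The function $h$ is strictly decreasing on $(0,\infty)$, because $N\ge3$, $s\in(0,1)$ and $A(u)>0$. Moreover $h(1)=2P(u)=0$. Hence $g'>0$ on $(0,1)$ and $g'<0$ on $(1,\infty)$, so $g(\tau)<g(1)=I(u)$ for every $\tau\neq1$. Also $g(\tau)\to-\infty$ as $\tau\to\infty$, so we can fix $T>1$ with $g(T)<0$.

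Next we need continuity of the path. The map $\tau\mapsto u_\tau$ is continuous from $(0,\infty)$ into $H^1(\mathbb{R}^N)$, by the standard continuity of dilations in $L^2$ applied to $u$ and $\nabla u$, and by the scaling of the Gagliardo seminorm. As $\tau\to0^+$ we have
\[
\|u_\tau\|^2=\tau^{N-2}a+\tau^{N-2s}b_0+\tau^Nc\to0,
\]
so setting $u_0:=0$ keeps the map continuous at $0$. Take the continuous increasing reparametrization $\tau(t)=2t$ on $[0,1/2]$ and $\tau(t)=1+(2t-1)(T-1)$ on $[1/2,1]$, and set $\rho(t):=u_{\tau(t)}$. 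Then:
\begin{itemize}
\item $\rho(0)=0$ and $I(\rho(1))=g(T)<0$, so $\rho\in\Sigma$;
\item $\rho(1/2)=u$;
\item for $t\in(0,1]\setminus\{1/2\}$, $I(\rho(t))=g(\tau(t))<I(u)$;
\item at $t=0$, $I(\rho(0))=0<I(u)$, because $I(u)=g(1)>\lim_{\tau\to0^+}g(\tau)=0$ by the monotonicity of $g$ on $(0,1)$.
\end{itemize}

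The main obstacle is not the one-variable calculus but the availability of $P(u)=0$ for an arbitrary weak solution. This rests on the Brezis--Kato type regularity and the Poho\v{z}aev identity from the Appendix, whose proof there uses the extra structure \ref{f4}. I would therefore cite that Appendix result at this point, noting that it is applied under whatever hypotheses it requires when the lemma is used in Theorem \ref{Theorem 2}. Everything else above is elementary.
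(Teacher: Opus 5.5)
Your proposal is correct and follows essentially the paper's proof: the same dilation path $u(x/\tau)$, the same use of the Poho\v{z}aev identity $P(u)=0$ (which, as you note, the paper also takes from the Appendix, where it is proved under \ref{f4}), and the same strict-monotonicity argument showing that $\tau=1$ is the unique strict maximum of $g$. The differences are cosmetic. The paper eliminates $A(u)$ via the identity before differentiating, and it reparametrizes by $\tau=2t$ with an explicit coefficient check that $I(u(\cdot/2))<0$. You instead keep $A(u)>0$ inside $h$ and choose any $T>1$ with $g(T)<0$, which spares you that check.
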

	\begin{proof}
		Define $\tilde{\rho}:[0,\infty)\rightarrow H^1(\mathbb{R}^N)$ as follows:
		\begin{equation*}
			\tilde{\rho}(t)(x)= \left\{
			\begin{array}{cc}
				u(\frac{x}{t}), & \text{ for } t>0,\\
				0, & \text{ for }t=0,     		
			\end{array} \;\;\;\;\text{ for all } x\in \mathbb{R}^N.
			\right.
		\end{equation*}
		Clearly, $\tilde{\rho}$ is continuous and by \eqref{Poho}
		\begin{eqnarray*}
			I(\tilde{\rho}(t)) & = & \frac{t^{N-2}}{2}\left\| \nabla u \right\|_2^2+\frac{t^{N-2s}}{2}[u]^2+\frac{t^N}{2}\left\| u \right\|_2^2-\frac{t^{N+\alpha}}{2}A(u)\\
			& = & \left(\frac{t^{N-2}}{2}-\frac{(N-2)t^{N+\alpha}}{2(N+\alpha)}\right)\left\| \nabla u \right\|_2^2+\left(\frac{t^{N-2s}}{2}-\frac{(N-2s)t^{N+\alpha}}{2(N+\alpha)}\right)[u]^2\\
			&& +\left(\frac{t^{N}}{2}-\frac{Nt^{N+\alpha}}{2(N+\alpha)}\right)\left\| u \right\|_2^2.
		\end{eqnarray*}
		Defining $g(t):=I(\tilde{\rho}(t))$ for all $t\in [0,\infty)$, one can easily see that a non-zero critical point of $g$ must satisfy:
		$$t^{-\alpha-2}\left(\left(\frac{N-2}{2}\right)\left\| \nabla u \right\|_2^2+\left(\frac{N-2s}{2}\right)t^{2(1-s)}[u]^2+\frac{N}{2}t^2\left\| u \right\|_2^2\right)=K_u,$$
		where $K_u=\left(\frac{N-2}{2}\right)\left\| \nabla u \right\|_2^2+\left(\frac{N-2s}{2}\right)[u]^2+\frac{N}{2}\left\| u \right\|_2^2$.\\
		Claim: $g$ has unique non-zero critical point.\\
		Suppose not, then $h$ must attain $K_u$ atleast twice, where
		$$h(t):=t^{-\alpha-2}\left(\left(\frac{N-2}{2}\right)\left\| \nabla u \right\|_2^2+\left(\frac{N-2s}{2}\right)t^{2(1-s)}[u]^2+\frac{N}{2}t^2\left\| u \right\|_2^2\right)\text{ for all } t>0.$$
		Thus $h$ has atleast one critical point in $(0,\infty)$. But
		\begin{equation*}
			h'(t)  =  -\frac{(\alpha+2)(N-2)}{2t^{\alpha+3}}\left\| \nabla u \right\|_2^2-\frac{(\alpha+2s)(N-2s)}{2t^{\alpha+2s+1}}[u]^2-\frac{\alpha N}{2t^{\alpha+1}}\left\| u \right\|_2^2<0\text{ for all } t>0.
		\end{equation*}
		Thus $g$ has unique critical point in $(0,\infty)$ and since $g'(1)=0$, we can see that $0$ and $1$ are the only critical points of $g$. Now, $g(0)=0$ and 
		$$g(1)= \frac{(\alpha+2)}{2(N+\alpha)}\left\| \nabla u \right\|_2^2+\frac{(\alpha+2s)}{2(N+\alpha)}[u]^2+\frac{\alpha}{2(N+\alpha)}\left\| u \right\|_2^2>0,$$
		therefore, by continuity of $g$, we must have $g(t)<g(1)$ for all $t\in [0,\infty)\setminus \{1\}$, that is 
		\begin{equation}\label{5.1}
			I(\tilde{\rho}(t))<I(u) \text{ for all } t\in [0,\infty)\setminus \{1\}.
		\end{equation}
		Using this $\tilde{\rho}$ we will construct the required path $\rho:[0,1]\rightarrow H^1(\mathbb{R}^N)$ as 
		$$\rho(t)=\tilde{\rho}(2t) \text{ for all } t\in [0,1].$$
		Clearly, $\rho(0)=\tilde{\rho}(0)=0$, $\rho(1/2)=\tilde{\rho}(1)=u$, also by \eqref{5.1}, $I(\rho(t))=I(\tilde{\rho}(2t))<I(u)$ for all $t\in [0,1]\setminus\{1/2\}$ and
		\begin{eqnarray*}
			I(\rho(1)) & = & I(\tilde{\rho}(2))\\
			& = & 2^{N-1}\left(\left(\frac{(N+\alpha)-2^{\alpha+2}(N-2)}{4(N+\alpha)}\right)\left\| \nabla u \right\|_2^2\right.\\
            &&+\left(\frac{(N+\alpha)-2^{\alpha+2s}(N-2s)}{4(N+\alpha)}\right)[u]^2\\
			&& \left.+\left(\frac{(N+\alpha)-2^{\alpha}N}{4(N+\alpha)}\right)\left\| u \right\|_2^2\right) <0. 
		\end{eqnarray*}
		This completes the proof.     \end{proof}

\subsection{Preliminaries for the proof of Theorem \ref{Symmetry}}
\begin{lemma}\label{lemma 4.4}
	{Let $f \in C(\mathbb{R};\mathbb{R})$ satisfy {\rm (f1)} and let $\rho \in \Sigma$. 
If $t_0 \in [0,1]$ and
\[
b := I(\rho(t_0)) > I(\rho(t)) \quad \text{for all } t \in [0,1]\setminus\{t_0\},
\]
then $I'(\rho(t_0)) = 0$.}
\end{lemma}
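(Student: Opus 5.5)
The plan is to argue by contradiction, using a deformation of the path $\rho$ near $t_0$; this is the classical ``strict maximum on a path at the mountain pass level is critical'' argument from Moroz--Van Schaftingen. Here $b$ is the mountain pass level defined in the Notations, and the hypothesis says that $\rho(t_0)$ is the unique point where $I\circ\rho$ attains this level. Since $I(\rho(0))=0<b$ (because $b>0$ by Lemma \ref{Lemma 4.1}) and $I(\rho(1))<0<b$, we have $t_0\in(0,1)$. Suppose $I'(\rho(t_0))\neq 0$. Because $I$ is $C^1$ on $H^1(\mathbb{R}^N)$ (which follows from (f1) via the Hardy--Littlewood--Sobolev inequality, Proposition \ref{prop1}), there is an element $w\in H^1(\mathbb{R}^N)$ with $\|w\|=1$ and $I'(\rho(t_0))(w)<0$. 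By continuity of $I'$, there exist $\delta>0$ and $\eta>0$ such that
\[
I'(v)(w)\le -\eta\quad\text{for all } v \text{ with } \|v-\rho(t_0)\|\le 2\delta .
\]
By continuity of $\rho$, pick $\varepsilon>0$ with $[t_0-\varepsilon,t_0+\varepsilon]\subset(0,1)$ and $\|\rho(t)-\rho(t_0)\|\le\delta$ on this interval.

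Next, deform the path. Take a continuous cutoff $\psi:[0,1]\to[0,1]$ with $\psi(t_0)=1$ and $\psi=0$ outside $(t_0-\varepsilon,t_0+\varepsilon)$, and set
\[
\tilde\rho(t):=\rho(t)+\delta\,\psi(t)\,w .
\]
This path is continuous, satisfies $\tilde\rho(0)=\rho(0)=0$ and $\tilde\rho(1)=\rho(1)$, and hence $\tilde\rho\in\Sigma$. For $t$ with $|t-t_0|<\varepsilon$, the mean value theorem gives
\[
I(\tilde\rho(t))=I(\rho(t))+\delta\psi(t)\,I'(\rho(t)+\theta\delta\psi(t)w)(w)\le I(\rho(t))-\eta\delta\psi(t),
\]
because the intermediate point lies within $2\delta$ of $\rho(t_0)$. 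For the other values of $t$, $\tilde\rho(t)=\rho(t)$.

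Finally, compare the maximum of the deformed path with $b$. On the compact set $[0,1]\setminus(t_0-\varepsilon/2,t_0+\varepsilon/2)$, the continuous function $I\circ\rho$ is strictly less than $b$ by the uniqueness hypothesis, so its maximum there is some $b'<b$. On this set $I(\tilde\rho(t))\le I(\rho(t))\le b'$. If we choose $\psi\ge c>0$ on $[t_0-\varepsilon/2,t_0+\varepsilon/2]$, for instance a piecewise linear tent with $\psi\ge 1/2$ there, then on that interval $I(\tilde\rho(t))\le b-\eta\delta/2$. Therefore
\[
\sup_{t\in[0,1]} I(\tilde\rho(t))<b,
\]
which contradicts the definition of $b$ as the infimum over $\Sigma$. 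Hence $I'(\rho(t_0))=0$.

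The only delicate point is the uniform estimate near $t_0$, that is, choosing $\delta$ so that the perturbed points stay in the neighbourhood where $I'(\cdot)(w)\le-\eta$. This is handled by the $2\delta$ neighbourhood chosen in the first step, and it uses only that $I\in C^1$.
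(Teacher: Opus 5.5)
Your proof is correct, but it takes a different route from the paper. The paper argues by contradiction through the quantitative deformation lemma \cite[Lemma 2.3]{Willem2012Minimax}. That lemma gives a deformation of $H^1(\mathbb{R}^N)$ which is the identity away from a level strip around $b$, does not increase $I$, and sends $\rho(t_0)$ into $\{I\le b-\epsilon\}$. The deformed path then still lies in $\Sigma$ but has maximum below $b$.

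You avoid the deformation lemma altogether. You fix a single descent direction $w$ with $I'(\rho(t_0))(w)<0$, and use continuity of $I'$ to get a uniform negative bound on $I'(v)(w)$ on the ball of radius $2\delta$ around $\rho(t_0)$. You then push the path along $w$ only for $t$ near $t_0$, using a tent cutoff. The mean value theorem handles the window around $t_0$, and the compactness of $[0,1]\setminus(t_0-\varepsilon/2,t_0+\varepsilon/2)$ handles the rest. You also correctly read $b$ as the minimax level; the lemma is false if $b$ is merely defined as $I(\rho(t_0))$, and the paper's proof tacitly uses the minimax meaning.

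What your version buys is that it is elementary and self-contained, and needs nothing beyond continuity of $I'$ at the single point $\rho(t_0)$. It is also more careful than the paper's text in two places. First, the paper asserts $\|I'(w)\|>8\epsilon/\delta$ on the whole strip $\{|I(w)-b|\le 2\epsilon\}$. This does not follow from $I'(\rho(t_0))\neq 0$, since other critical points may lie at level $b$. The bound must be localized to a $2\delta$-neighbourhood of $\rho(t_0)$, which Willem's lemma with $S=\{\rho(t_0)\}$ permits. Second, the paper's final claim that the supremum over the deformed path is below $b$ relies on attainment of the maximum on the compact interval $[0,1]$. The paper leaves this implicit, and you make it explicit.

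What the deformation-lemma approach buys is generality. It adapts directly to situations where the path reaches level $b$ on a larger compact set rather than at a single point. There one fixed direction $w$ would not suffice, and one would need a pseudo-gradient field.
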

\begin{proof}
	Let if possible, $I'(\rho(t_0))\neq 0$, then we can find $\epsilon$ and $\delta>0$ such that
	\begin{equation}\label{eq6.1}
		\left\| I'(w)\right\|>\frac{8\epsilon}{\delta} \text{ whenever } { w\in H^1(\mathbb{R}^N) \text{ satisfies }} |I(w)-I(\rho(t_0))|\leq 2\epsilon.
	\end{equation}     
	Hence, by \cite[Lemma 2.3]{Willem2012Minimax} we can find a path $\eta \in C([0,1]\times H^1(\mathbb{R}^N);H^1(\mathbb{R}^N))$ such that:
	\begin{enumerate}
		\myitem{1.}\label{1} $\eta(t,{w})={w}$ if either $t=0$ or $|I({w})-I(\rho(t_0))|>2\epsilon$, { for $t\in [0,1]$ and $w\in H^1(\mathbb{R}^N)$};
		\myitem{2.}\label{2} $I(\eta(1,\rho(t_0)))\leq I(\rho(t_0))-\epsilon=b-\epsilon$; and
		\myitem{3.}\label{3}  $I(\eta(t,{w}))$ is nonincreasing with respect to variable $t$, for every ${w}\in H^1(\mathbb{R}^N)$.
	\end{enumerate}
    {Now, since $b>0$ and $\rho\in \Sigma$, we can find $\epsilon>0$ such that $$2\epsilon< b=|I(\rho(t_0))|=|I(0)-I(\rho(t_0))|=|I(\rho(0))-I(\rho(t_0))|.$$ Then by \ref{1}, $\eta(1,\rho(0))=\rho(0)=0$.}
    Also, by {the properties \ref{1} and \ref{3} above, we have} $I(\eta(1,\rho)(1))\leq I(\eta(0,\rho(1)))=I(\rho(1))<0$. Therefore, $\eta(1,\rho{(1)})\in \Sigma$. Now, since $I(\eta(1,\rho(t_0)))\leq b-\epsilon<b$, and for every $t\in [0,1]\setminus \{t_0\}$, we have
	$$I(\eta(1,\rho(t))) \leq I(\eta(0,\rho(t)))=I(\rho(t))<b.$$
	Thus, by continuity of $I$ and $\eta$, we get
	\begin{equation*}
		b  >  I(\eta(1,\rho(t_0))) =\sup_{\tau\in [0,1]} I(\eta(1,\rho(\tau))) \geq \inf_{\rho\in \Sigma}\sup_{\tau\in [0,1]}I(\rho(\tau)) =b.
	\end{equation*}
	Hence, by contradiction, $I'(\rho(t_0))=0$.
\end{proof}
	\noindent In order to prove radial symmetry of a positive solution, we recall the concept of polarization. For $H\subset \mathbb{R}^N$, a closed half-space, and a function ${g}:\mathbb{R}^N\rightarrow \mathbb{R}$, we define its polarization $g^{H}:\mathbb{R}^N\rightarrow \mathbb{R}$ as follows:
	\begin{equation*}
		g^{H}(x)=\left\{
		\begin{array}{cc}
			\max\{g(x),g(\rho_H(x))\} & \text{ if } x\in H,\\
			\min\{g(x),g(\rho_H(x))\} & \text{ if } x \notin H,
		\end{array}
		\right.
	\end{equation*}
	where $\rho_H$ denotes the reflection about the boundary of $H$. Moreover, we have the following properties \cite{Moroz2015existence}:
	\begin{lemma}\label{Lemma 4.5}
		If $0<u\in H^1(\mathbb{R}^N)$, then $u^H\in H^1(\mathbb{R}^N)$ with $\left\| \nabla u^H \right\|_2^2=\left\| \nabla u \right\|_2^2$ and $\left\| u^H \right\|_2\leq \left\| u \right\|_2$.
	\end{lemma}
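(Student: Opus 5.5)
The plan is to reduce the statement to a pointwise identity on pairs of reflected points, in the spirit of the polarization lemmas of Moroz and Van Schaftingen. Fix the closed half-space $H$ and write $\bar x := \rho_H(x)$. Since $\rho_H$ is an isometry and an involution, $\mathbb{R}^N = H \cup \rho_H(H)$ up to the null set $\partial H$. For every $x \in H$ the pair of values $(u^H(x), u^H(\bar x))$ is either $(u(x),u(\bar x))$ or its swap $(u(\bar x),u(x))$.

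\textbf{Step 1: measurability and the $L^2$ norm.} First I note that $u^H = \max\{u, u\circ\rho_H\}\chi_H + \min\{u,u\circ\rho_H\}\chi_{\mathbb{R}^N\setminus H}$ is measurable. Splitting $\int_{\mathbb{R}^N} |u^H|^2\,dx = \int_H \big(|u^H(x)|^2 + |u^H(\bar x)|^2\big)\,dx$ and using the swap observation gives $\|u^H\|_2 = \|u\|_2$. In particular the inequality $\|u^H\|_2 \le \|u\|_2$ holds; the same argument actually yields equality for every $L^p$ norm. Positivity of $u$ is not really needed here; it only guarantees that $u^H > 0$ as well.

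\textbf{Step 2: $H^1$ regularity and the Dirichlet integral.} On $H$ I write $u^H = \tfrac12\big(u + u\circ\rho_H\big) + \tfrac12\,\big|u - u\circ\rho_H\big|$, and on the complement the same expression with a minus sign in front of the absolute value. Each piece lies in $H^1$ locally on each side. This follows from the facts that $u\circ\rho_H \in H^1(\mathbb{R}^N)$ with $|\nabla(u\circ\rho_H)(x)| = |\nabla u(\bar x)|$, and that $|w| \in H^1$ with $\nabla|w| = \operatorname{sgn}(w)\nabla w$ a.e.

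The main obstacle is gluing across $\partial H$, i.e.\ showing that $u^H \in H^1(\mathbb{R}^N)$ globally and not just on each half-space. For this I use that on $\partial H$ one has $x = \bar x$, so $u - u\circ\rho_H$ has zero trace there. The traces of the two one-sided definitions therefore coincide (both equal $u$), and a function that is $H^1$ on each side with matching traces is $H^1$ on $\mathbb{R}^N$.

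Equivalently, and more cleanly, I can argue by density. I approximate $u$ by $C_c^\infty$ functions $u_k$; the $u_k^H$ are Lipschitz, and $u_k^H \to u^H$ in $L^2$ by the $1$-Lipschitz property of polarization in $L^2$. If I also have bounded gradients, weak lower semicontinuity then places $u^H$ in $H^1$.

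For the gradient itself, a.e.\ on $H$ the function $u^H$ coincides with either $u$ or $u\circ\rho_H$ on the set where that function is the larger one. On the set $\{u = u\circ\rho_H\}$ the gradients agree a.e., by the standard fact that $\nabla v = 0$ a.e.\ on level sets of $v$. Hence, pointwise a.e.\ on $H$,
\[
|\nabla u^H(x)|^2 + |\nabla u^H(\bar x)|^2 = |\nabla u(x)|^2 + |\nabla u(\bar x)|^2 .
\]
Integrating this over $H$ and changing variables $x \mapsto \bar x$ gives $\|\nabla u^H\|_2^2 = \|\nabla u\|_2^2$. These same gradient bounds close the density argument from the previous paragraph.
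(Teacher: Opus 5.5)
Your argument is correct. The paper gives no proof of this lemma; it simply imports it from \cite{Moroz2015existence}. You instead reconstruct the standard two-point argument behind that citation, in three steps:
\begin{itemize}
\item The pair $(u^H(x),u^H(\bar x))$ is a permutation of $(u(x),u(\bar x))$. This gives $\|u^H\|_p=\|u\|_p$ for all $p$.
\item The identity $\max\{a,b\}=\frac{a+b}{2}+\frac{|a-b|}{2}$, and its $\min$ analogue, shows that $u^H$ is $H^1$ on each open half-space. The two pieces then glue across $\partial H$.
\item Using $\nabla v=0$ a.e.\ on $\{v=0\}$, the gradient pair $(\nabla u^H(x),\nabla u^H(\bar x))$ equals a.e.\ either $(\nabla u(x),\nabla u(\bar x))$ or $(R\nabla u(\bar x),R\nabla u(x))$, where $R$ is the orthogonal linear part of $\rho_H$. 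Hence $\|\nabla u^H\|_2=\|\nabla u\|_2$.
\end{itemize}

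The citation buys brevity. Your route buys three things: a self-contained proof, the sharper conclusion $\|u^H\|_2=\|u\|_2$, and the observation that neither positivity nor (for the $L^2$ part) membership in $H^1$ is needed. The last point is relevant, since the paper later applies the $L^2$ part to a general $w\in H^s(\mathbb{R}^N)$ in the proof of Lemma~\ref{Updated_Lemma 3.3.3}.

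Two small tightenings.

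\emph{Zero trace on $\partial H$.} Since $\partial H$ is a null set, ``$x=\bar x$ on $\partial H$'' does not by itself show that $u-u\circ\rho_H$ has zero trace. Argue instead as follows. The trace of $u\circ\rho_H$ on $\partial H$ from inside $H$ equals the trace of $u$ from the complementary side, because $\rho_H$ swaps the two open half-spaces and fixes $\partial H$ pointwise. That trace agrees with the trace of $u$ from inside $H$, because $u\in H^1(\mathbb{R}^N)$.

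\emph{Density variant.} Weak lower semicontinuity only yields $\|\nabla u^H\|_2\le\|\nabla u\|_2$. It also relies on the $L^2$-nonexpansiveness of polarization, which you assert without proof. Equality comes from your pointwise identity applied to $u$ itself, so the gluing route alone already suffices.
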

    \begin{lemma}\label{Updated_Lemma 3.3.3}
    We have the following polarization relations
    \begin{enumerate}
        \item For $\alpha\in (0,N)$, $0\leq w \in L^{\frac{2N}{N+\alpha}}(\mathbb{R^N})$ we have:
        \begin{equation}\label{New_eq1}
            \int_{\mathbb{R}^N}\int_{\mathbb{R}^N}\frac{w(x)w(y)}{|x-y|^{N-\alpha}}\,dxdy\leq \int_{\mathbb{R}^N}\int_{\mathbb{R}^N}\frac{w^H(x)w^H(y)}{|x-y|^{N-\alpha}}\,dxdy,
        \end{equation}
          with equality if and only if either $w=w^H$ or $w^H=w(\rho_H)$ and hence 
          \begin{equation}\label{New_eq2}
            A(u)\leq A(u^H) \text{ for every } u\in H^1(\mathbb{R}^N),  
          \end{equation}
          with equality if and only if either $u^H=u$ or $u^H=u(\rho_H)$.
        \item For any $w\in H^s(\mathbb{R^N})$, we have:
        $$[w^H]^2\leq [w]^2.$$
    \end{enumerate}
    \end{lemma}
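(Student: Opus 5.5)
Part 1 will be taken from \cite[Lemma~5.5]{Moroz2015existence}, which is the Riesz-potential rearrangement inequality \eqref{New_eq1} together with its equality cases. Inequality \eqref{New_eq2} then follows by applying it to $w=F(u)$. The point to verify is that $F(u)^H=F(u^H)$. Polarization commutes with composition by monotone functions, so this holds when $F$ is nondecreasing; in the intended setting of Theorem~\ref{Symmetry}, $f$ does not change sign on $(0,\infty)$, so $F$ is monotone along the range of $u>0$.

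Concretely, for a.e.\ pair $\{x,\rho_H(x)\}$ with $x\in H$, the polarization either keeps or swaps the two values, so $F(u^H)$ is obtained from $F(u)$ by the same swaps. Hence $F(u^H)=(F(u))^H$, or $F(u^H)=-(-F(u))^H$ in the nonincreasing case, and in either case $A(u)\le A(u^H)$. The equality characterization for $A$ transfers from that of \eqref{New_eq1} by the same pairwise argument.

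For part 2 I will work directly with the Gagliardo double integral, splitting $\mathbb{R}^N\times\mathbb{R}^N$ into the four pieces $H\times H$, $H\times H^c$, $H^c\times H$, $H^c\times H^c$. Using the change of variables $x\mapsto\rho_H(x)$, $y\mapsto\rho_H(y)$, which preserves Lebesgue measure and satisfies $|\rho_H x-\rho_H y|=|x-y|$, the whole integral becomes an integral over $x,y\in H$ of
\[
\frac{|w(x)-w(y)|^2+|w(\bar x)-w(\bar y)|^2}{|x-y|^{N+2s}}+\frac{|w(x)-w(\bar y)|^2+|w(\bar x)-w(y)|^2}{|x-\bar y|^{N+2s}},
\]
where $\bar x=\rho_H(x)$. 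For $x,y\in H$ one has $|x-y|\le|x-\bar y|$. Write $a=w(x)$, $b=w(\bar x)$, $c=w(y)$, $d=w(\bar y)$.

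I then compare this integrand with the one for $w^H$ case by case. If the polarization swaps both pairs or neither pair, the integrand is unchanged. Otherwise, say $a\ge b$ but $c<d$, so only the pair at $y$ is swapped. The integrand for $w^H$ is then the one for $w$ with the roles of the $K_1=|x-y|^{-N-2s}$ and $K_2=|x-\bar y|^{-N-2s}$ terms exchanged. The difference (original minus polarized) equals
\[
(K_1-K_2)\big[(a-c)^2+(b-d)^2-(a-d)^2-(b-c)^2\big]=2(K_1-K_2)(a-b)(d-c)\ge0 .
\]
Hence $[w]^2\ge[w^H]^2$.

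The main obstacle I anticipate is measurability and integrability bookkeeping in this pointwise comparison. I also need to check that $w^H\in H^s$, which follows from the inequality itself applied to the sets where the integrand is finite. This is standard: polarization is a measurable rearrangement and the integrands are nonnegative, so Tonelli's theorem justifies all the manipulations.
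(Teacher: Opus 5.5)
Your part 1 follows the paper. The paper also takes \eqref{New_eq1} from Moroz--Van Schaftingen, re-deriving it by folding onto $H\times H$ as in \cite[Lemma~5.3]{Moroz2013groundstates}. It then uses $F(u)^H=F(u^H)$, justified by ``$F$ is an increasing function''. Your explicit restriction to $F$ monotone on the range of $u$ is exactly what that step needs, and it cannot be removed. Take $F(t)=t^2/2$ (even, as in Theorem~\ref{Symmetry}) and let $u=-2$ on $S\subset H$, $u=2$ on $T\subset H$, $u\circ\rho_H=1$ on $S\cup T$, and $u=0$ elsewhere. Then $F(u)^H=F(u)$, while $F(u^H)$ is ordered the wrong way on $S$ only. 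So $F(u^H)^H=F(u)$ with strict inequality in \eqref{New_eq1}, i.e.\ $A(u^H)<A(u)$, and this persists for $H^1$ approximations. Hence the clause ``for every $u\in H^1(\mathbb{R}^N)$'' is not provable as stated.

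One step of yours needs more than monotonicity: the equality case for $A$ transfers from that of \eqref{New_eq1} only when $F$ is strictly monotone on the range of $u$. Assumption (f4) allows $f\equiv0$ on an interval $[0,a]$, and then $F(u)^H=F(u)$ does not force $u^H=u$. The paper does not prove this transfer either. In the proof of Theorem~\ref{Symmetry} it uses equality only at the level of $F(u)$, and treats separately the points where $f$ vanishes between $u(x)$ and $u(\rho_H(x))$.

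Part 2 is where you genuinely differ, and your route is the sound one. The paper expands $|w^H(x)-w^H(y)|^2$ into two diagonal terms and a cross term. It compares the cross terms with the folding inequality for the kernel $|x-y|^{-N-2s}$, and the diagonal terms via $\|w^H\|_2\le\|w\|_2$. But each diagonal term equals $\|w\|_2^2\int_{\mathbb{R}^N}|z|^{-N-2s}\,dz=+\infty$, and the cross term is not absolutely convergent. The Hardy--Littlewood--Sobolev inequality cited for well-definedness only concerns kernels $|x-y|^{-(N-\alpha)}$ with $0<\alpha<N$. As written, the paper's argument is an $\infty-\infty$ computation.

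The paper's argument can be repaired: run the same expansion with the integrable nonincreasing kernel $\min\{|z|^{-N-2s},\varepsilon^{-N-2s}\}$ and let $\varepsilon\to0$ by monotone convergence. That has the merit of reducing part 2 to the same cross-term inequality as part 1. Your pointwise comparison of the folded integrands avoids all of this. It rests on $(a-c)^2+(b-d)^2-(a-d)^2-(b-c)^2=2(a-b)(d-c)$ and $|x-y|\le|x-\rho_H(y)|$ on $H\times H$. Every integrand is nonnegative, so Tonelli justifies the manipulations in $[0,\infty]$, and no sign condition on $w$ is needed. You also get $w^H\in H^s(\mathbb{R}^N)$, with $\|w^H\|_2=\|w\|_2$, for free.
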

     \begin{proof}
      	As done in the proof of \cite[Lemma 5.3]{Moroz2013groundstates}, for any $\beta>0$ we have:
		\begin{eqnarray}\label{eq 6.3_New}
			\int_{\mathbb{R}^N} \int_{\mathbb{R}^N}\frac{w(x)w(y)}{|x-y|^{\beta}}\,dxdy & = & \int_{H}\int_{H}\frac{(w(x)w(y)+w(\rho_H(x))w(\rho_H(y)))}{|x-y|^{\beta}}\,dxdy\nonumber\\
			&& +\int_{H}\int_{H}\frac{w(x)w(\rho_H(y))+w(\rho_H(x))w(y)}{|x-\rho_H(y)|^{\beta}}\,dxdy\nonumber\\
			& \leq & \int_{H}\int_{H}\frac{(w^H(x)w^H(y)+w^H(\rho_H(x))w^H(\rho_H(y)))}{|x-y|^{\beta}}\,dxdy\nonumber\\
			&& +\int_{H}\int_{H}\frac{w^H(x)w^H(\rho_H(y))+w^H(\rho_H(x))w^H(y)}{|x-\rho_H(y)|^{\beta}}\,dxdy\nonumber\\
			& = & \int_{\mathbb{R}^N}\int_{\mathbb{R}^N} \frac{w^H(x)w^H(y)}{|x-y|^{\beta}}\,dxdy,
		\end{eqnarray}   
        With equality, if and only if either $w^H=w$ or $w^H=w(\rho_H)$, see \cite[Lemma~5.3]{Moroz2013groundstates}, more precisely, if $w^H=w(\rho_H)$, we have:
        \begin{eqnarray*}
            \int_{\mathbb{R}^N}\int_{\mathbb{R}^N}\frac{w(x)w(y)}{|x-y|^{\beta}} & = & \int_{H}\int_{H}\frac{(w(x)w(y)+w(\rho_H(x))w(\rho_H(y)))}{|x-y|^{\beta}}\,dxdy\\
			&& +\int_{H}\int_{H}\frac{w(x)w(\rho_H(y))+w(\rho_H(x))w(y)}{|x-\rho_H(y)|^{\beta}}\,dxdy\\
            & = & \int_{H}\int_{H}\frac{w(x)w(y)+w^H(x)w^H(y)}{|x-y|^{\beta}}\;dxdy\\
            &&+\int_{H}\int_{H}\frac{w(x)w^H(y)+w^H(x)w(y)}{|x-\rho_H(y)|^{\beta}}\;dxdy,
        \end{eqnarray*}
        now, since $w^H(\rho_{H}(x))=w(\rho_H(\rho_H(x)))=w(x)$ for every $x\in \mathbb{R}^N$, we get:
        \begin{eqnarray*}
            \int_{\mathbb{R}^N}\int_{\mathbb{R}^N}\frac{w(x)w(y)}{|x-y|^{\beta}} & = & \int_{H}\int_{H}\frac{(w^H(x)w^H(y)+w^H(\rho_H(x))w^H(\rho_H(y)))}{|x-y|^{\beta}}\,dxdy\\
			&& +\int_{H}\int_{H}\frac{w^H(x)w^H(\rho_H(y))+w^H(\rho_H(x))w^H(y)}{|x-\rho_H(y)|^{\beta}}\,dxdy\\
            & = & \int_{\mathbb{R}^N}\int_{\mathbb{R}^N}\frac{w^H(x)w^H(y)}{|x-y|^{\beta}}\;dxdy.
        \end{eqnarray*}
       Notice that for any $w\in L^{\frac{2N}{N+\alpha}}(\mathbb{R}^N)$, the integrals in \eqref{New_eq1} are well defined and hence by \eqref{eq 6.3_New}, the inequality \eqref{New_eq1} holds. Next, since for every $u\in H^1(\mathbb{R}^N)$, we have:
        $$\int_{\mathbb{R}^N}|F(u)|^{\frac{2N}{N+\alpha}}\leq C\int_{\mathbb{R}^N}\left(|u|^2+|u|^{2^*}\right)<+\infty,$$
        that is, $F(u)\in L^{\frac{2N}{N+\alpha}}(\mathbb{R}^N)$, by \eqref{New_eq1}
        \begin{equation}\label{New_eq3}
          A(u)=\int_{\mathbb{R^N}}\int_{\mathbb{R}^N}\frac{A_{\alpha}F(u)(x)F(u)(y)}{|x-y|^{N-\alpha}}\;dxdy\leq \int_{\mathbb{R^N}}\int_{\mathbb{R}^N}\frac{A_{\alpha}F(u)^H(x)F(u)^H(y)}{|x-y|^{N-\alpha}}\;dxdy.  
        \end{equation}
        Moreover, since $F$ is an increasing function and 
        \begin{equation*}
          F(u)^H(x)=\left\{
          \begin{array}{cc}
              \max\{F(u)(x), F(u)(\rho_H)(x)\} & \text{ if } x\in H,\\
               \min \{F(u)(x), F(u)(\rho_H)(x)\} &  \text{ if } x\notin H, \\
        \end{array} \right.  
        \end{equation*}
        considering all possible cases, one can deduce that $F(u^H)=F(u)^H$. For instance, consider the case when $x\in H$, then 
        $$F(u)^H(x)= \max\{F(u)(x), F(u)(\rho_H)(x)\},$$
        thus, we have following two possibilities:\\
        Either
        $$F(u)^H(x)=F(u)(x)\Rightarrow F(u)(x)\geq F(u)(\rho_H(x))\Rightarrow u(x)\geq u(\rho_H(x)),$$
        since $F$ is an increasing function. Therefore, in this case $u^H(x)=u(x)$, and hence $F(u^H)=F(u)=F(u)^H$. \\
        Or
        $$F(u)^H(x)= F(u)(\rho_H(x))\Rightarrow F(u)(x)\leq F(u)(\rho_H(x))\Rightarrow u(x)\leq u(\rho_H(x)),$$
        since $F$ is an increasing function. Therefore, in this case $u^H(x)=u(\rho_H(x))$, and hence $F(u^H)=F(u(\rho_H))=F(u)^H$. Similarly, the case of $x\notin H$ can be studied. Thus \eqref{New_eq3} becomes:
        \begin{eqnarray*}
            A(u) & \leq & \int_{\mathbb{R}^N}\int_{\mathbb{R}^N}\frac{A_{\alpha}F(u)^H(x)F(u)^H(y)}{|x-y|^{N-\alpha}}\;dxdy\\
            & = & \int_{\mathbb{R}^N}\int_{\mathbb{R}^N}\frac{A_{\alpha}F(u^H)(x)F(u^H)(y)}{|x-y|^{N-\alpha}}\;dxdy=A(u^H) \text{ for every } u\in H^1(\mathbb{R}^N).
        \end{eqnarray*}
        Next we will see the Gagliardo semi-norm. Suppose $w\in H^s(\mathbb{R}^N)$, then by \eqref{eq 6.3_New} we have:
        \begin{eqnarray*}
            [w^H]^2 & = & \int_{\mathbb{R}^N}\int_{\mathbb{R}^N}\frac{|w^H(x)|^2}{|x-y|^{N+2s}}\,dxdy+\int_{\mathbb{R}^N}\int_{\mathbb{R}^N}\frac{|w^H(y)|^2}{|x-y|^{N+2s}}\,dxdy\\
            && -2 \int_{\mathbb{R}^N}\int_{\mathbb{R}^N}\frac{w^H(x)w^H(y)}{|x-y|^{N+2s}}\,dxdy,\\
            & \leq & \int_{\mathbb{R}^N}\int_{\mathbb{R}^N}\frac{|w^H(x)|^2}{|x-y|^{N+2s}}\,dxdy+\int_{\mathbb{R}^N}\int_{\mathbb{R}^N}\frac{|w^H(y)|^2}{|x-y|^{N+2s}}\,dxdy\\
            && -2 \int_{\mathbb{R}^N}\int_{\mathbb{R}^N}\frac{w(x)w(y)}{|x-y|^{N+2s}}\,dxdy,
        \end{eqnarray*}
        since, by HLS-inequality (Proposition \ref{prop1}) the above integrals are well defined, thus by Lemma \ref{Lemma 4.5}
        \begin{eqnarray*}
			[w^H]^2 & \leq & \int_{\mathbb{R}^N}\frac{1}{|z|^{N+2s}}\left(\int_{\mathbb{R}^N}|w^H(y+z)|^2dy\right)dz+
			\int_{\mathbb{R}^N}\frac{1}{|z|^{N+2s}}\left(\int_{\mathbb{R}^N}|w^H(x+z)|^2dy\right)dz\\
			&& -2 \int_{\mathbb{R}^N}\int_{\mathbb{R}^N}\frac{w(x)w(y)}{|x-y|^{N+2s}}dxdy
			=  \int_{\mathbb{R}^N}\frac{2\left\| w^H \right\|_2^2}{|z|^{N+2s}}dz-2 \int_{\mathbb{R}^N}\int_{\mathbb{R}^N}\frac{w(x)w(y)}{|x-y|^{N+2s}}dxdy\\
			& \leq & \int_{\mathbb{R}^N}\frac{2\left\| w \right\|_2^2}{|z|^{N+2s}}dz-2 \int_{\mathbb{R}^N}\int_{\mathbb{R}^N}\frac{w(x)w(y)}{|x-y|^{N+2s}}dxdy=[w]^2.
            \end{eqnarray*}
     \end{proof}
    
	\begin{lemma}\label{Lemma 4.7}
		Assume that $u\in L^2(\mathbb{R}^N)$ is a nonnegative function, then there exists $x_0\in \mathbb{R}^N$ and a non- increasing function $v:(0,\infty)\rightarrow \mathbb{R}$ such that $u(x)=v(|x-x_0|)$ almost everywhere, if and only if, for every closed half-space $H\subset \mathbb{R}^N$ either $u^H=u$ or $u^H=u(\rho_H)$. 
	\end{lemma}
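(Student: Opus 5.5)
This is the classical characterization of radial monotone functions via polarization, as in Moroz–Van Schaftingen (see \cite[Lemma~5.4]{Moroz2015existence}, originally due to Van Schaftingen and Willem). The plan is to prove the two directions separately.

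\emph{Necessity.} Suppose $u(x)=v(|x-x_0|)$ with $v$ non-increasing, and let $H$ be a closed half-space.
\begin{itemize}
\item If $x_0\in H$, then for every $x\in H$ we have $|x-x_0|\le|\rho_H(x)-x_0|$. Since $v$ is non-increasing, $u(x)\ge u(\rho_H(x))$ a.e. on $H$, with the reverse inequality off $H$. Hence $u^H=u$.
\item If $x_0\notin H$, the same argument applied with the roles swapped gives $u^H=u\circ\rho_H$.
\end{itemize}
The only care needed is that these statements hold a.e., which is harmless because $\rho_H$ preserves measure.

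\emph{Sufficiency.} Assume every closed half-space $H$ satisfies $u^H\in\{u,u\circ\rho_H\}$. The approach is to compare $u$ with its Schwarz symmetrization $u^*$.

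First, I locate the center $x_0$. Since $u\in L^2$ and $u\ge0$, for each $\lambda>0$ the superlevel set $\{u>\lambda\}$ has finite measure. The dichotomy implies that for each $H$ the set $\{u>\lambda\}$ is, up to null sets, either ``$H$-dominated'' or ``$\rho_H(H)$-dominated''. This is the combinatorial heart of the argument.

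I would show that the hypothesis on all half-spaces forces each superlevel set to be a ball up to a null set, and that all these balls are concentric. For concentricity, consider half-spaces whose boundary hyperplane separates the centers of two such balls. Such an $H$ would violate the dichotomy unless the balls share a center, i.e.\ unless the level sets are nested compatibly.

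A cleaner route, which I would try first, uses \cite[Lemma~5.4]{Moroz2015existence} directly. Its proof works as follows:
\begin{enumerate}
\item Choose a countable dense family of half-spaces $H_k$, with boundaries through rational points and rational normals.
\item Use the dichotomy to find, for each direction, the hyperplane of ``symmetry'' of $u$. This is the unique $t$ such that $u$ is symmetric with respect to $\{x\cdot e=t\}$, characterized through the balance of mass of $\min(u,\lambda)$ on the two sides.
\item Show that these hyperplanes all pass through a single point $x_0$.
\item Conclude that $u$ is symmetric under every reflection fixing $x_0$, and hence radial about $x_0$.
\item Monotonicity of $v$ then follows from the dichotomy applied to half-spaces $H$ with $x_0\in H$. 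For such $H$ one must have $u^H=u$, since $u\circ\rho_H$ would put more mass on the wrong side, contradicting the radial symmetry unless the two functions coincide. This yields $u(x)\ge u(\rho_H x)$ whenever $|x-x_0|\le|\rho_Hx-x_0|$.
\end{enumerate}

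\emph{Main obstacle.} I expect the hardest step to be step 3 together with the a.e. issues. One must pass from ``for each $H$, one of two equalities holds a.e.'' to a single center that works simultaneously for all $H$. The countable dense family and a continuity argument in $L^2$ are needed here; in particular, $H\mapsto u\circ\rho_H$ is continuous in $L^2$ because translations and rotations act continuously. Since the statement is exactly the known lemma, in the paper one would simply cite \cite[Lemma~5.4]{Moroz2015existence}.
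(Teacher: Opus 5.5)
Your proposal follows the paper's proof. For necessity you use the same comparison of $|x-x_0|$ with $|\rho_H(x)-x_0|$ according to whether $x_0\in H$; the paper phrases it as a case-by-case contradiction, you argue directly and correctly note that the a.e.\ issue is harmless since $\rho_H$ preserves null sets. For sufficiency the paper, like you, simply invokes the Moroz--Van Schaftingen characterization, citing it as \cite[Lemma~5.4]{Moroz2013groundstates}, where the statement appears verbatim. Your extra outline of that direction (steps 1--5) is only heuristic---for instance, step 5 would need an actual mass-comparison argument---so it is the citation, not the outline, that carries the sufficiency direction.
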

	\begin{proof}
		Suppose $0\leq u \in L^2(\mathbb{R}^N)$ is such that, for every closed half-space $H$, either $u^H=u$ or $u^H=u(\rho_H)$, then by \cite[Lemma 5.4]{Moroz2013groundstates} there exists $x_0\in \mathbb{R}^N$ and a nonincreasing function $v:(0,\infty)\rightarrow \mathbb{R}$ satisfying $u(x)=v(|x-x_0|)$ almost everywhere in $\mathbb{R}^N$. Conversely, for a closed half-space $H\subset \mathbb{R}^N$, let if possible there exists $x,y\in \mathbb{R}^N$ such that $u^H(x)\neq u(\rho_H(x))$ and $u^H(y)\neq u(y)$. Then we have the following cases:\\
		Case 1: $x_0\in H$.\\
		Now, if $y\in H$, then we must have $u(\rho_H(y))>u(y)$ and hence $|\rho_H(y)-x_0|<|y-x_0|$, since $v$ is nonincreasing. But, since $y,x_0\in H$, we have $|y-x_0|<|\rho_H(y)-x_0|$. Similarly, if $y\notin H$, we get $|\rho_H(y)-x_0|>|y-x_0|$ but since $x_0,\rho_H(y)\in H$ we get a contradiction. Thus $u^H=u$.\\
		Case 2: $x_0\notin H$.\\
		Now, if $x\in H$, then we must have $u(\rho_H(x))<u(x)$ and hence $|\rho_H(x)-x_0|>|x-x_0|$, since $v$ is nonincreasing. But, since $\rho_H(x),x_0\notin H$, we have $|\rho_H(x)-x_0|<|x-x_0|$. Similarly, if $x\notin H$, we get $|\rho_H(x)-x_0|<|x-x_0|$ but since $x_0,x\notin H$ we get a contradiction. Thus $u^H=u(\rho_H)$.     	
	\end{proof}

\section{Proof of the main results}

\subsection{Proof of Theorem \ref{Theorem 1}}
		If (a) of Lemma \ref{Lemma 4.2} is true, then by continuity of $I$, we get $b=0$, which is a contradiction to Lemma \ref{Lemma 4.1}. Thus, (a) does not hold. So, (b) of Lemma \ref{Lemma 4.2} holds, which gives existence of $u\in H^1(\mathbb{R}^N)\setminus\{0\}$ such that $I'(u)=0$, therefore $u$ is a weak solution of \eqref{1.1}. This completes the proof.

\subsection{Proof of Theorem \ref{Theorem 2}}
		By  Section \ref{Existence}, we can find a Poho\v{z}aev-Palais-Smale sequence $\{u_n\}_{n\in\mathbb{N}}$ in $H^1(\mathbb{R}^N)$ at level $b$ and $u\in H^1(\mathbb{R}^N)$ such that $u$ is a  weak solution of \eqref{1.1} and $u_n\rightharpoonup u$ weakly in $H^1(\mathbb{R}^N)$. Therefore, by \eqref{Poho} we have:
		\begin{eqnarray*}
			I(u) & = & I(u)-\frac{P(u)}{N+\alpha}\\
			& = & \left(\frac{\alpha+2}{2(N+\alpha)}\right)\left\| \nabla u \right\|_2^2+\left(\frac{\alpha+2s}{2(N+\alpha)}\right)[u]^2+\frac{\alpha}{2(N+\alpha)}\left\| u \right\|_2^2\\
			& \leq & \liminf_{n\rightarrow \infty}\left(\left(\frac{\alpha+2}{2(N+\alpha)}\right)\left\| \nabla u_n \right\|_2^2+\left(\frac{\alpha+2s}{2(N+\alpha)}\right)[u_n]^2+\frac{\alpha}{2(N+\alpha)}\left\| u_n \right\|_2^2\right)\\
			& = & \liminf_{n\rightarrow \infty}\left(I(u_n)+\frac{P(u_n)}{N+\alpha}\right)=\liminf_{n\rightarrow \infty} I(u_n)=b.
		\end{eqnarray*}
		Also, $$I(u)\geq m:=\inf\{I(v): {v\in H^1(\mathbb{R}^N)\setminus\{0\} \text{ is a weak solution of } \eqref{1.1}} \},$$
		therefore,
		\begin{align}\label{e5.2}
			m\leq I(u)\leq b.
		\end{align}
		Now, let $v\in H^1(\mathbb{R}^N)$ be such that $v$ solves \eqref{1.1} and $I(v)\leq I(u)$. Then by Lemma \ref{lemma 4.3}, we can find $\rho\in \Sigma$ such that $\rho(1/2)=v$ and $I(\rho(t))<I(v)$ for all $t\in [0,1]\setminus \{1/2\}$. Thus,
		$$I(u)\leq b\leq I(\rho(t))\leq I(v)\leq I(u),$$
		hence $I(v)=I(u)=b$ for every $v$ that solves \eqref{1.1}. Therefore, by using the definition of $m$ and \eqref{e5.2}, we have $m=I(u)=b$ which implies $u$ is the ground state solution. \qed
\subsection{Proof of Theorem \ref{Symmetry}}
\noindent Using the above properties of polarization, let us study the symmetry of a positive solution to \eqref{1.1}.
	\begin{myproof}{Theorem}{\ref{Symmetry}}
		Without loss of generality, let us assume that $f(x)\geq 0$ for all $x\in (0,\infty)$. Since $u$ {is a weak solution of \eqref{1.1}}, by Lemma \ref{lemma 4.3}, we can find a path $\tilde{\sigma}\in \Sigma$ such that $\tilde{\sigma}(1/2)=u$ and $I(\tilde{\sigma}(t))<I(u)$ for all $t\in [0,1]\setminus\{1/2\}$. Taking $\sigma=|\tilde{\sigma}|$, we get $\sigma\in\Sigma$ with $\sigma(1/2)=u$ and $\sigma(t)\geq 0$ for all $t\in [0,1]$. Now, for a closed half-space $H$ {of $\mathbb{R}^N$, we} define $\sigma^H:[0,1]\rightarrow H^1(\mathbb{R}^N)$ such that 
		$$\sigma^H(z)=\sigma(z)^H \text{ for all } z\in [0,1].$$
		Clearly, $\sigma^H\in C([0,1];H^1(\mathbb{R}^N)$, by continuity of $\sigma$ and polarization. Now, as explained in the proof of Lemma \ref{Updated_Lemma 3.3.3}, we have $F(w)^H=F(w^H)$ for every $w\in H^1(\mathbb{R}^N)$.
        Thus, by Lemma \ref{Lemma 4.5} and Lemma \ref{Updated_Lemma 3.3.3} we have:
		\begin{eqnarray}\label{eq 6.4}
			I(\sigma^H(z)) & \leq & \frac{\left\|\nabla \sigma(z)\right\|_2^2}{2}+\frac{\left\| \sigma(z)\right\|_2^2}{2}+\frac{[\sigma(z)]^2}{2}-\int_{\mathbb{R}^N}(I_{\alpha}*F(\sigma(z))^H)F(\sigma(z))^H\,dx\nonumber\\
			& \leq &  \frac{\left\|\nabla \sigma(z)\right\|_2^2}{2}+\frac{\left\| \sigma(z)\right\|_2^2}{2}+\frac{[\sigma(z)]^2}{2}-\int_{\mathbb{R}^N}(I_{\alpha}*F(\sigma(z)))F(\sigma(z))\,dx\nonumber\\
			& = &  I(\sigma(z)) \text{ for all } z\in [0,1].
		\end{eqnarray}
		This gives us that $\sigma^H\in \Sigma$ and hence 
		$$b\leq \max_{z\in [0,1]}I(\sigma^H(z)).$$
		Also, since $I(|v|)\leq I(v)$, for all $z\in [0,1]$ we get:
		$$I(\sigma^H(z))\leq I(\sigma(z))=I(|\tilde{\sigma}(z)|)\leq I(\tilde{\sigma}(z))\leq I(u)=b,$$
		therefore,
		$\displaystyle\max_{z\in [0,1]}I(\sigma^H(z))\leq b$ and hence $b=\displaystyle\max_{z\in [0,1]}I(\sigma^H(z))$.  Now, since $u$ is a ground state solution, for all $z\in [0,1]$
		$$I(\sigma^H(1/2)) = I(u^H) \geq I(u) \geq I(\tilde{\sigma}(z))\geq I(|\tilde{\sigma}(z)|)= I(\sigma(z))\geq I(\sigma^H(z)).$$
		Thus, 
		$$I(u)=b =\max_{z\in [0,1]}I(\sigma^H(z))=I(\sigma^H(1/2))=I(u^H).$$
		By Lemma \ref{Lemma 4.5} and Lemma \ref{Updated_Lemma 3.3.3}, we can see that $I(u)=I(u^H)$ implies that either $F(u)^H=F(u)$ or $F(u)^H=F(u)(\rho_H)$. Now, if $F(u)^H=F(u)$, then for all $x\in H$ we have $F(u)(x)\geq F(u)(\rho_H(x))$, and hence
		$$\int_{u({\rho_H(x)})}^{u(x)}f(s)ds=F(u)(x)-F(u)(\rho_H(x))\geq 0,$$
		this implies that, for all $x\in H$, either $u(x)\geq u(\rho_H(x))$ or $f(s)=0$ for all $s\in [u(x),u(\rho_H(x))]$. Therefore, $f(u^H)(x)=f(u)(x)$ for all $x\in H$, similarly, one can see the case of $x\in \mathbb{R}^N\setminus H$ and deduce that $f(u^H)=f(u)$ in $\mathbb{R}^N$. Now, since $b=I(u^H)=I(\sigma^H(1/2))>I(\sigma^H(z))$ for all $z\in [0,1]\setminus\{1/2\}$, we can use Lemma \ref{lemma 4.4} to conclude that $u^H$ is a weak solution of
		$$-\Delta u^H+(-\Delta)^s u^H+u^H=(I_{\alpha}*F(u^H))f(u^H)=(I_{\alpha}*F(u))f(u) \text{ in } \mathbb{R}^N,$$
		and since $u$ solves \eqref{1.1}, we observe that $u^H=u$. Similarly, for the case of $F(u)^H=F(u)(\rho_H)$, we can deduce that $u^H=u(\rho_H)$. Since, $H$ is an arbitrary closed half-space {of $\mathbb{R}^N$}, by Lemma \ref{Lemma 4.7}, $u$ is radially symmetric.
	\end{myproof}
\section{Appendix: Regularity and Poho\v{z}aev identity}
    In this section, we prove the regularity results and Poho\v{z}aev identity stated as follows:
    \begin{mytheorem}\label{L_infinity}
		{Let $u \in H^1(\mathbb{R}^N)$, $u \not\equiv 0$, be a weak solution of \eqref{1.1}, where $f$ satisfies {\rm (f1)--(f3)}. 
Then $u \in L^q(\mathbb{R}^N)$ for all $2 \le q \le \infty$. 
Moreover, $u \in W^{2,q}_{\mathrm{loc}}(\mathbb{R}^N)$ for all $q \ge 2$, and
\[
u \in C^{1,\delta}_{\mathrm{loc}}(\mathbb{R}^N) \quad \text{for every } \delta \in (0,1).
\]}
	\end{mytheorem}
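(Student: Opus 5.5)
We follow the Moroz--Van Schaftingen strategy for the Choquard equation, adapted to the mixed operator $-\Delta+(-\Delta)^s+1$. We first write the right-hand side as a linear potential. Set
\[
H:=I_\alpha*F(u),\qquad K:=\frac{f(u)}{1+|u|^{\frac{\alpha+2}{N-2}}} .
\]
By (f1) one has $|F(t)|\le C(|t|^{\frac{N+\alpha}{N}}+|t|^{\frac{N+\alpha}{N-2}})$ and $|f(t)|\le C(|t|^{\frac{\alpha}{N}}+|t|^{\frac{\alpha+2}{N-2}})$. Then $u$ solves
\[
-\Delta u+(-\Delta)^s u+u=(HK)\,u\ \text{-type bound},\qquad |(I_\alpha*F(u))f(u)|\le |H|\,|f(u)|.
\]
The goal is the analogue of \cite[Proposition 3.1]{Moroz2015existence}. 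We write $|H f(u)|\le V(x)|u|$-ish with $V\in L^{2N/\alpha}+L^{N/2}$-type spaces, or more precisely decompose $F(u)=F_1+F_2$ and $f(u)=f_1+f_2$ with $F_i, f_i$ in Lebesgue spaces with prescribed exponents. We then run a bootstrap/Brezis--Kato iteration.

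\textbf{Step 1 ($L^q$ for all $q<\infty$).} Test \eqref{weak_sol} with $v=u\,|u_T|^{2(\beta-1)}$, where $u_T=\max(-T,\min(u,T))$ is the truncation (valid in $H^1$). The local part gives $\int|\nabla(u|u_T|^{\beta-1})|^2$ up to the usual constant $C\beta$. For the nonlocal part we use the elementary inequality
\[
(a-b)\bigl(a|a_T|^{2(\beta-1)}-b|b_T|^{2(\beta-1)}\bigr)\ge 0 ,
\]
so that $\ll u,v\gg\ge 0$ and the term can simply be dropped. This is the key point making the mixed case no harder than the local one for integrability. The right-hand side is estimated as in Moroz--Van Schaftingen (their Lemma 3.2): by HLS together with the splitting of $H$ and $f(u)$ into small-norm plus bounded parts, one gets
\[
\int H f(u)u|u_T|^{2(\beta-1)}\le \varepsilon\,\|u|u_T|^{\beta-1}\|_{2^*}^2+C_\varepsilon\|u|u_T|^{\beta-1}\|_2^2
\]
for $\beta$ in the admissible range. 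Absorbing via Sobolev and letting $T\to\infty$ yields $u\in L^{2^*\beta}$. Iterating gives $u\in L^q$ for all $q\in[2,\infty)$.

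\textbf{Step 2 ($L^\infty$).} With $u\in\bigcap_{q<\infty}L^q$, we get $F(u)\in L^1\cap L^r$ for large $r$, so $H=I_\alpha*F(u)\in L^\infty$ (split the kernel near and far from the origin). Also $f(u)\in L^q$ for large $q$. Hence $g:=Hf(u)\in L^q$ for all large $q$ and $u$ solves $-\Delta u+(-\Delta)^su+u=g$. A Moser iteration (again discarding the nonlocal term by the same monotonicity inequality), or comparison with the Bessel-type kernel of $-\Delta+(-\Delta)^s+1$, gives $u\in L^\infty$. Thus $u\in L^q$ for all $2\le q\le\infty$.

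\textbf{Step 3 ($W^{2,q}_{\rm loc}$ and $C^{1,\delta}_{\rm loc}$).} Here the difficulty is that $(-\Delta)^su$ must be treated as a lower-order term. First, since $u\in H^1\cap L^\infty$, interpolation arguments give $u\in C^{0,\gamma}_{\rm loc}$ via De Giorgi for mixed operators; alternatively we start from $g\in L^\infty$. We then bootstrap. If $u\in W^{1,q}_{\rm loc}\cap L^\infty$, we estimate $(-\Delta)^su$ in $L^q_{\rm loc}$ by splitting
\[
(-\Delta)^s u(x)=\int_{|y-x|<1}\dots+\int_{|y-x|\ge1}\dots ,
\]
where the tail is bounded by $\|u\|_\infty$ and the near part is controlled by $W^{2s,q}$, i.e.\ by $W^{1,q}$ when $s<1/2$ and by $W^{1+\theta,q}$ otherwise. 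Then $-\Delta u=g-u-(-\Delta)^su\in L^q_{\rm loc}$, and Calder\'on--Zygmund on balls gives $W^{2,q}_{\rm loc}$. A finite number of steps, improving the fractional regularity each time, reaches every $q\ge2$. Morrey's embedding then gives $C^{1,\delta}_{\rm loc}$ for all $\delta<1$.

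\textbf{Expected obstacle.} The main obstacle is this last step: handling $(-\Delta)^su$ only locally when $s\ge 1/2$ requires the intermediate Sobolev regularity bootstrap, carried out carefully with cutoffs. A cleaner route, which we would try first, is to use the global equation: since $g\in L^2\cap L^\infty$, Fourier analysis gives
\[
u=(|\xi|^2+|\xi|^{2s}+1)^{-1}\hat g ,
\]
and the multiplier $(1+|\xi|^2)(|\xi|^2+|\xi|^{2s}+1)^{-1}$ is a Mikhlin multiplier. Hence $u\in W^{2,q}(\mathbb{R}^N)$ globally for all $q\in[2,\infty)$, which yields the local statements directly.
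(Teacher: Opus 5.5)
The gap is in Step 1, and Step 2 inherits it. Moroz--Van Schaftingen's Lemma 3.2 is a bilinear bound for $\int (I_\alpha*(H|w|^\theta))K|w|^{2-\theta}$. There $H=F(u)/u$ sits \emph{inside} the convolution, $H,K\in L^{2N/\alpha}+L^{2N/(\alpha+2)}$, and the bound requires $\frac{\alpha}{N}<\theta<2-\frac{\alpha}{N}$. Because $\theta$ is tied to the power in the test function, this caps the exponent itself. That is why their Proposition 3.1 (the paper's Lemma~\ref{lemma_6.1}) gives only $u\in L^q$ for $q\in[2,\frac{N}{\alpha}2^*)$, and repeating the step does not move the cap.

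Freezing $H=I_\alpha*F(u)$ as a potential and running Brezis--Kato does not work either. From $u\in H^1$ you only know $F(u)\in L^{2N/(N+\alpha)}$, so $I_\alpha*F(u)\in L^{2N/(N-\alpha)}$. The effective potential $V=(I_\alpha*F(u))f(u)/u$ is unbounded where $u$ is small, since $|f(t)|\lesssim|t|^{\alpha/N}$ is sublinear; that part can only be treated as a source term, not absorbed. On $\{|u|\ge1\}$ the available bound is $|V|\lesssim|I_\alpha*F(u)|\,|u|^{\frac{\alpha+4-N}{N-2}}$. Hölder puts this in $L^{N/2}$ only when $N\le\alpha+4$. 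When $N>\alpha+4$ you are left with $|V|\lesssim|I_\alpha*F(u)|\in L^{2N/(N-\alpha)}$, and $\frac{2N}{N-\alpha}<\frac N2$. So your inequality $\int (I_\alpha*F(u))f(u)\,u|u_T|^{2(\beta-1)}\le\varepsilon\|u|u_T|^{\beta-1}\|_{2^*}^2+C_\varepsilon\|u|u_T|^{\beta-1}\|_2^2$ does not follow, and the iteration never starts. Step 2 then derives $I_\alpha*F(u)\in L^\infty$ from $u\in\bigcap_{q<\infty}L^q$, which inverts the logical order.

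The missing idea is that the restricted range already suffices to decouple the Choquard term. Take $u\in L^q$ for $q\in[2,\frac N\alpha 2^*)$ and split the Riesz kernel on $B_1$ and its complement. Apply Hölder to $|u|^\beta$, $\beta\in\{\frac{N+\alpha}{N},\frac{N+\alpha}{N-2}\}$, using an exponent $\gamma\in(\frac N\alpha,\frac{2^*N}{\alpha\beta})$ near the origin and $\frac{2N}{N+\alpha}$ away from it. This gives $I_\alpha*F(u)\in L^\infty$, as in \eqref{eq I_alpha}. Only then is the right-hand side local with critical growth, $|(I_\alpha*F(u))f(u)|\le C(1+|u|^{2^*-1})$. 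A Moser iteration then yields $L^\infty$; your monotonicity observation disposes of the Gagliardo term there, exactly as the convexity inequality \eqref{eq_4.6} does in the paper. Separately, $F(u)\in L^1$ is not justified: near $0$ you only have $|F(t)|\lesssim|t|^{(N+\alpha)/N}$ with $\frac{N+\alpha}{N}<2$, and $u$ is not known to lie below $L^2$. What you do have, $F(u)\in L^{2N/(N+\alpha)}$, is enough.

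With the order repaired, your Fourier route in Step 3 is correct and differs from the paper's. Hölder with $I_\alpha*F(u)\in L^{2N/(N-\alpha)}\cap L^\infty$ and $f(u)\in L^{2N/\alpha}\cap L^\infty$ gives $g\in L^2\cap L^\infty$. The symbol $(1+|\xi|^2)/(1+|\xi|^2+|\xi|^{2s})$ satisfies Mikhlin's condition, so you get $u\in W^{2,q}(\mathbb{R}^N)$ globally for $2\le q<\infty$. The paper instead splits $(-\Delta)^s$ into near and far parts and invokes Garroni--Menaldi, which gives only local $W^{2,q}$ bounds.
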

    \noindent Using this regularity result, we then obtain the following Poho\v{z}aev identity.
    \begin{mytheorem}\label{Pohozaev_identity}
    If $u \in H^1(\mathbb R^N)$ is a weak solution of \eqref{1.1} with $f$ satisfying {\rm (f1)--(f4)}, then it satisfies
    \begin{align}\label{Poho}
    \left(\frac{N-2}{2}\right)\|\nabla u\|_2^2 + \left(\frac{N-2s}{2}\right)[u]^2+\frac{N}{2}\|u\|_2^2 =\left(\frac{N+\alpha}{2}\right)A(u).
    \end{align}
\end{mytheorem}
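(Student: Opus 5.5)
Our plan is to derive \eqref{Poho} by testing the equation against the dilation generator $x\cdot\nabla u$, suitably truncated, and to obtain the three "scaling weights" on the left and the factor $\frac{N+\alpha}{2}$ on the right from the behaviour of each term under $u\mapsto u(\cdot/\lambda)$. By Theorem \ref{L_infinity}, $u\in L^\infty(\mathbb{R}^N)\cap W^{2,q}_{\mathrm{loc}}(\mathbb{R}^N)\cap C^{1,\delta}_{\mathrm{loc}}(\mathbb{R}^N)$. Since the local Laplacian alone gives interior regularity, the classical term is handled as in \cite{Moroz2015existence}. Fix a cutoff $\varphi\in C_c^\infty(\mathbb{R}^N)$ with $\varphi=1$ on $B_1$, set $\varphi_R=\varphi(\cdot/R)$, and use $v_R=\varphi_R\, x\cdot\nabla u\in H^1(\mathbb{R}^N)$ as test function in \eqref{weak_sol}. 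The standard integrations by parts give, as $R\to\infty$:
\begin{itemize}
\item $\int\nabla u\cdot\nabla v_R\to -\frac{N-2}{2}\|\nabla u\|_2^2$;
\item $\int u\,v_R\to-\frac N2\|u\|_2^2$;
\item the Choquard term tends to $-\frac{N+\alpha}{2}A(u)$.
\end{itemize}
The last limit is obtained by writing $F(u)\,x\cdot\nabla u$ as $x\cdot\nabla F(u)$ and using the symmetry of the kernel, $x\cdot\nabla_x I_\alpha(x-y)+y\cdot\nabla_y I_\alpha(x-y)=-(N-\alpha)I_\alpha(x-y)$. Dominated convergence applies because $F(u)\in L^{2N/(N+\alpha)}$ and $I_\alpha*F(u)\in L^{2N/(N-\alpha)}$, by (f1) and Proposition \ref{prop1}.

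The main obstacle is the nonlocal term $\ll u,v_R\gg$. We cannot directly integrate by parts since $x\cdot\nabla u$ need not lie in $H^s$ globally with the required decay, and the identity $\ll u,x\cdot\nabla u\gg=-\frac{N-2s}{2}[u]^2$ requires global regularity. Following \cite{Anthal2025Pohozaev}, we avoid differentiating and use difference quotients of the dilation instead. Set $u_\lambda(x)=u(\lambda x)$ and test \eqref{weak_sol} with $w_\lambda=\frac{u_\lambda-u}{\lambda-1}\in H^1(\mathbb{R}^N)$, letting $\lambda\to1$. For the nonlocal part, the exact scaling $[u_\lambda]^2=\lambda^{2s-N}[u]^2$ together with
$$\ll u,u_\lambda-u\gg=\tfrac12\big([u_\lambda]^2-[u]^2-[u_\lambda-u]^2\big)$$
gives
$$\frac{\ll u,u_\lambda-u\gg}{\lambda-1}=\frac{\lambda^{2s-N}-1}{2(\lambda-1)}[u]^2-\frac{[u_\lambda-u]^2}{2(\lambda-1)}.$$
The first term tends to $-\frac{N-2s}{2}[u]^2$. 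Because $[u_\lambda-u]^2\to0$ by continuity of dilations in $H^s$, the second term vanishes once we take one‑sided limits $\lambda\to1^{\pm}$ and average. Precisely, the two one-sided limits of the error have opposite signs, and since the left side converges to the same value from both sides, we get two inequalities that pinch. The same polarization trick handles $\|\nabla u\|_2^2$ and $\|u\|_2^2$, so in fact we may carry out the whole argument with $w_\lambda$ and do not need $v_R$ at all.

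The delicate step is then the right-hand side, namely showing
$$\frac{1}{\lambda-1}\int(I_\alpha*F(u))f(u)(u_\lambda-u)\to-\frac{N+\alpha}{2}A(u)$$
without assuming $u$ is differentiable in a globally integrable way. Here (f4) is used. Since $f$ is nondecreasing, $F$ is convex, so
$$f(u)(u_\lambda-u)\le F(u_\lambda)-F(u)\le f(u_\lambda)(u_\lambda-u).$$
Multiplying by $I_\alpha*F(u)$ and dividing by $\lambda-1$, with sign depending on $\lambda\gtrless1$, brackets the right side between expressions built from $\int (I_\alpha*F(u))\,(F(u_\lambda)-F(u))$. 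To handle these, we expect to split the positive and negative parts of $I_\alpha*F(u)$, or else to use the bound $|f(t)|\le\bar k|t|$ to control the discrepancy between the two brackets by $\bar k\int|I_\alpha*F(u)|\,|u_\lambda-u|^2/|\lambda-1|\to0$. This last limit holds since $u\in L^\infty$ and $\|u_\lambda-u\|_2^2=O(|\lambda-1|^2)$, because $u\in H^1$. The bracketing integral is computed by scaling exactly:
$$\int(I_\alpha*F(u))F(u_\lambda)=\lambda^{-\frac{N+\alpha}{2}}\int (I_\alpha*F(u_{\sqrt\lambda}))F(u_{\sqrt\lambda})\dots,$$
or more simply via $\int(I_\alpha*F(u))(F(u_\lambda)-F(u))=\frac12\big(A(u_\lambda)-A(u)-A_{\mathrm{mix}}\big)$, where $A(u_\lambda)=\lambda^{-N-\alpha}A(u)$ and the mixed term $\int (I_\alpha*(F(u_\lambda)-F(u)))(F(u_\lambda)-F(u))$ is $o(|\lambda-1|)$, again by HLS and continuity of dilations. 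This yields $-\frac{N+\alpha}{2}A(u)$ from both sides. Combining the limits of all terms gives \eqref{Poho}.
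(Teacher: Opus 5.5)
There is a genuine gap, and it sits in the right-hand side. Your one-sided pinching on the left is sound as logic, but it only works if
$R(\lambda):=\frac{1}{\lambda-1}\int_{\mathbb{R}^N}(I_\alpha*F(u))f(u)(u_\lambda-u)\,dx$
has a two-sided limit equal to $-\frac{N+\alpha}{2}A(u)$. The pinching does not remove the difficulty; it relocates it. Testing \eqref{weak_sol} with $u_\lambda-u$ gives the exact identity
\[
R(\lambda)=\frac{(\lambda^{2-N}-1)\|\nabla u\|_2^2+(\lambda^{2s-N}-1)[u]^2+(\lambda^{-N}-1)\|u\|_2^2}{2(\lambda-1)}-\frac{\|u_\lambda-u\|^2}{2(\lambda-1)}.
\]
So $\lim_{\lambda\to1}R(\lambda)$ exists if and only if $\|u_\lambda-u\|^2=o(|\lambda-1|)$, which is exactly the rate you set out to avoid.

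Nor can the right-hand errors be played off against the left-hand ones. With $G=I_\alpha*F(u)\ge 0$, both $A_{\mathrm{mix}}\ge0$ and the convexity defect $\int G\,(F(u_\lambda)-F(u)-f(u)(u_\lambda-u))\ge0$ enter with the factor $-\frac{1}{\lambda-1}$, the same as $\|u_\lambda-u\|^2$. (Here $G\ge0$ because (f4) forces $f(0)=0$ and hence $F\ge 0$, so no splitting is needed.) Consequently, for $\lambda>1$ both sides of the equation are only bounded above by their scaling parts, and for $\lambda<1$ only below.

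Both routes you propose for the right-hand limit need a rate for dilations that is not available.
\begin{itemize}
\item By HLS and (f4), $A_{\mathrm{mix}}\le C\|F(u_\lambda)-F(u)\|_{2N/(N+\alpha)}^2\le C\|u_\lambda-u\|_2^2$. Continuity of dilations gives only $o(1)$ here, not $o(|\lambda-1|)$.
\item The bound $\bar k|u_\lambda-u|^2$ on the bracket discrepancy would require $f$ to be $\bar k$-Lipschitz. Assumption (f4) is only the growth bound $|f(t)|\le \bar k|t|$.
\item $\|u_\lambda-u\|_2=O(|\lambda-1|)$ is false for general $u\in H^1(\mathbb{R}^N)$. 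Since $u(\lambda x)-u(x)=\int_1^\lambda x\cdot\nabla u(tx)\,dt$, it amounts to $x\cdot\nabla u\in L^2(\mathbb{R}^N)$. This fails, for example, for $\sum_k k^{-1}\phi(\cdot-x_k)$ with $|x_k|\sim k$ and disjoint supports.
\end{itemize}
Theorem \ref{L_infinity} gives only local regularity and global integrability, with no weighted decay, so it does not close the gap.

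The missing ingredient is localization, which you threw away when you dropped $v_R$. The paper keeps the compactly supported vector field $\psi_\lambda(x)x$ and replaces the dilation by translation difference quotients $D_ju$, for which $\|D_ju\|_2\le\|\nabla u\|_2$ holds for every $H^1$ function. The convexity defects $I_{3,2}+J_{2,2}$, the analogue of your $\|u_\lambda-u\|^2/(\lambda-1)$, are handled by testing the equation with $D_ju$ and using (f4). The right-hand side converges because $D_ju\to\partial_ju$ in $L^2$ against a compactly supported weight. The cutoff is then removed by dominated convergence.

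To rescue your global route, you would first need a decay estimate such as $x\cdot\nabla u\in L^2(\mathbb{R}^N)$. Then $(u_\lambda-u)/(\lambda-1)\to x\cdot\nabla u$ in $L^2$, and $R(\lambda)$ converges directly to $\int G f(u)\,x\cdot\nabla u$. Your kernel-symmetry computation evaluates this limit as $-\frac{N+\alpha}{2}A(u)$, and the pinching then finishes the proof.
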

    
   \noindent Regarding the regularity result, thanks to Lemma 3.5 of \cite{Giacomoni2020regularity} we can follow the work of \cite{Moroz2015existence} to get higher order integrability of the solution. Precisely, we have the following:
	\begin{mylemma}\label{lemma_6.1}
		Let $u\in H^1(\mathbb{R}^N)$ be a weak solution of \eqref{1.1} with $f$ satisfying \ref{f1}-\ref{f3}. Then $u\in L^q(\mathbb{R}^N)$ for all $q\in [2,\frac{N}{\alpha}2^*)$.
	\end{mylemma}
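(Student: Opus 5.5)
We follow the Brezis--Kato type argument of Moroz--Van Schaftingen \cite{Moroz2015existence}, adapted to the mixed operator. The first step is to split the nonlinearity. By \ref{f1}, $|F(t)|\le C(|t|^{\frac{N+\alpha}{N}}+|t|^{\frac{N+\alpha}{N-2}})$ and $|f(t)|\le C(|t|^{\frac{\alpha}{N}}+|t|^{\frac{\alpha+2}{N-2}})$. Hence we can write
\[
(I_\alpha*F(u))f(u)=H\,K\,u,\qquad H:=\frac{|F(u)|}{|u|},\qquad K:=\frac{|f(u)|}{1}\cdot\frac{|u|}{|u|},
\]
interpreting this appropriately, so that $|H|,|K|\le C(|u|^{\frac{\alpha}{N}}+|u|^{\frac{\alpha+2}{N-2}})$. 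With $u\in H^1\subset L^2\cap L^{2^*}$, both $H$ and $K$ belong to $L^{\frac{2N}{\alpha}}+L^{\frac{2N}{\alpha+2}}$. Up to this point the argument is exactly that of \cite[Prop.~3.1]{Moroz2015existence}, and the equation reads
\[
-\Delta u+(-\Delta)^s u+u=(I_\alpha*(Hu))Ku .
\]

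The second step is the nonlocal Brezis--Kato estimate, which is the key ingredient from \cite{Moroz2015existence}, Lemma 3.2/3.3. For $H,K$ in the above class and $q\in[2,\frac{2N}{\alpha})$ (and the dual range), it gives the following. For every $\varepsilon>0$ there is $C_\varepsilon$ such that for all $w\in H^1(\mathbb{R}^N)$
\[
\int (I_\alpha*(H|w|))K|w|\le \varepsilon^2\|\nabla w\|_2^2+C_\varepsilon\|w\|_2^2 .
\]
This is proved by splitting $H=H^*+H_*$ and $K=K^*+K_*$ with $H^*,K^*$ small in $L^{2N/\alpha}$ and $H_*,K_*\in L^\infty$, and then applying HLS and Sobolev. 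It involves only the gradient term. Since $[w]^2\ge0$ contributes with a good sign, it transfers verbatim to our operator. I also need the analogous estimate for $w=|u|^{q/2-1}u$-type truncated powers.

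The third step is the Moser-type bootstrap. I test the weak formulation \eqref{weak_sol} with $v=u\,\min\{|u|,k\}^{q-2}$, which is admissible in $H^1$. For the local part this gives the standard lower bound $\int\nabla u\nabla v\ge c_q\int|\nabla(u\,u_k^{q/2-1})|^2$. For the fractional part I need
\[
\ll u,v\gg\ \ge 0
\]
(in fact $\ge c[u\,u_k^{q/2-1}]^2$). This follows from the monotonicity of $t\mapsto t\min\{|t|,k\}^{q-2}$, i.e. $(a-b)(g(a)-g(b))\ge0$; it is precisely the inequality of \cite[Lemma 3.5]{Giacomoni2020regularity}. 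So I simply discard the nonnegative nonlocal term. The right-hand side is then handled by the Step 2 estimate applied to $w=u\,u_k^{q/2-1}$, choosing $\varepsilon$ small to absorb $\|\nabla w\|_2^2$. This yields $\|w\|_{2^*}$ bounded uniformly in $k$ in terms of $\|u\|_q$. Letting $k\to\infty$ gives $u\in L^{qN/(N-2)}$ provided $q$ lies in the admissible range of the Brezis--Kato lemma. Iterating finitely many times from $q=2$ covers $[2,\frac{N}{\alpha}2^*)$, matching the restriction $q<\frac{2N}{\alpha}\cdot\frac{N}{N-2}$ coming from $H\in L^{2N/\alpha}$-type bounds.

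The main obstacle is the Brezis--Kato estimate with the precise exponent range, together with making sure that the interplay of HLS exponents yields exactly the upper limit $\frac{N}{\alpha}2^*$. The mixed operator itself causes no difficulty once the positivity of $\ll u,v\gg$ is in hand.
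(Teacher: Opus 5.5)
Your Step 1 is exactly the paper's proof. The paper sets $H=F(u)/u$ and $K=f(u)$, uses (f1) to place both in $L^{\frac{2N}{\alpha}}+L^{\frac{2N}{\alpha+2}}$, and then simply invokes Proposition~\ref{prop6.1}, the mixed-operator version of the Moroz--Van Schaftingen integrability result, quoted from Giacomoni et al.; nothing further is proved. Your formulas in Step 1 need fixing, though. The correct identity is $(I_\alpha*F(u))f(u)=(I_\alpha*(Hu))K$, not $(I_\alpha*(Hu))Ku$: an extra factor $u$ would force $K=f(u)/u$, for which (f1) gives no bound in the required class. Also, $H$ must be $F(u)/u$ without absolute values, since $Hu$ has to reproduce $F(u)$; absolute values belong only in the later estimates.

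Where you go further and re-prove that proposition, Step 3 does not close as written. The displayed Step 2 estimate is only the case $\theta=1$, and it does not control the right-hand side. Test with $v=u\,u_k^{q-2}$, $u_k=\min\{|u|,k\}$. The integrand of $\int(I_\alpha*(Hu))Kv$ is bounded by $I_\alpha(x-y)|H(y)||K(x)|\,|u(y)|\,|u(x)|u_k(x)^{q-2}$. Where $|u(x)|,|u(y)|\le k$ this equals $|u(y)||u(x)|^{q-1}$, while $|w(y)||w(x)|=|u(y)|^{q/2}|u(x)|^{q/2}$, and the ratio $(|u(x)|/|u(y)|)^{q/2-1}$ is unbounded. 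The missing ingredient is the full $\theta$-family of the Moroz--Van Schaftingen lemma,
\[
\int_{\mathbb{R}^N}\big(I_\alpha*(|H|\,|w|^{\theta})\big)|K|\,|w|^{2-\theta}\,dx\le\varepsilon^2\|\nabla w\|_2^2+C_{\varepsilon,\theta}\|w\|_2^2,\qquad \tfrac{\alpha}{N}<\theta<2-\tfrac{\alpha}{N},
\]
combined with the following elementary pointwise bound, obtained by distinguishing $|u(y)|\le k$ from $|u(y)|>k$:
\[
|u(y)|\,|u(x)|\,u_k(x)^{q-2}\le|w(y)|^{\frac{2}{q}}\,|w(x)|^{2-\frac{2}{q}}+|w(y)|\,|w(x)|,\qquad w:=u\,u_k^{\frac{q}{2}-1}.
\]
Apply the lemma with $\theta=2/q$ and with $\theta=1$. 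Together with $\int\nabla u\cdot\nabla v\ge\frac{4(q-1)}{q^2}\|\nabla w\|_2^2$, $\ll u,v\gg\ge0$ and $\int uv=\|w\|_2^2\le\|u\|_q^q$, this bounds $\|\nabla w\|_2$ independently of $k$. The admissibility condition $2/q>\alpha/N$, i.e. $q<\frac{2N}{\alpha}$, is what produces the endpoint $\frac{N}{\alpha}2^*$ after Sobolev.

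Your sketch of the lemma's proof also misplaces the smallness. The pieces that must be small are the $L^{\frac{2N}{\alpha+2}}$ components, because they are paired with $\|w\|_{2^*}\lesssim\|\nabla w\|_2$. The remainders must lie in $L^{\frac{2N}{\alpha}}$; a merely bounded remainder is not enough, since $\int(I_\alpha*|w|)|w|$ can be infinite for $w\in H^1(\mathbb{R}^N)$.

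With these repairs, your Steps 2--3 give a self-contained proof of what the paper only cites. Your treatment of the mixed operator, discarding the nonnegative term $\ll u,v\gg$, is correct.
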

	\noindent In order to prove above result, we require the following proposition which we take from \cite[Propsition~2.1]{Giacomoni2025Normalized}.
	\begin{proposition}\label{prop6.1}
		If $H$, $K \in L^{\frac{2N}{\alpha}}(\mathbb{R}^N)+L^{\frac{2N}{\alpha+2}}(\mathbb{R}^N)$ and $u\in H^1(\mathbb{R}^N)$ solves
		\begin{equation}\label{H-K}
			-\Delta u +(-\Delta)^su+u= (I_{\alpha}*Hu)K \text{ in } \mathbb{R}^N,
		\end{equation}
		then $u\in L^p(\mathbb{R}^N)$ for all $p\in[2,\frac{N}{\alpha}2^*)$.
	\end{proposition}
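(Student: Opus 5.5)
The plan is to adapt the Brezis--Kato type argument of Moroz--Van Schaftingen \cite{Moroz2015existence} to the mixed operator. The single ingredient specific to the fractional part is a truncation inequality for the Gagliardo form, which I take from \cite[Lemma 3.5]{Giacomoni2020regularity}.

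\textbf{Step 1: approximation of the potentials.} By Lemma 2.1 of \cite{Moroz2015existence} (a Hardy--Littlewood--Sobolev splitting), for every $\varepsilon>0$ there is $\Lambda_\varepsilon$ and a decomposition $H=H^*+H_*$, $K=K^*+K_*$ such that
\begin{equation*}
\int_{\mathbb{R}^N}(I_\alpha*|H_*\varphi|)|K_*\varphi|\,dx\le \varepsilon\big(\|\nabla\varphi\|_2^2+\|\varphi\|_2^2\big)\quad\text{for all }\varphi\in H^1(\mathbb{R}^N),
\end{equation*}
with $H^*,K^*\in L^\infty$ after truncating $H$ and $K$ at level $\Lambda_\varepsilon$. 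The smallness here comes from the small $L^{2N/\alpha}+L^{2N/(\alpha+2)}$ norm of the tails. Moreover, for $p$ in the admissible range, $(I_\alpha*H^*u)K^*$ and the cross terms lie in $L^p$-type spaces controlled by $\|u\|_{L^{p}}$-norms with a gain. The nonnegative extra term $[\varphi]^2$ only helps coercivity, so these estimates carry over verbatim.

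\textbf{Step 2: test with truncated powers.} I test \eqref{H-K} with $\varphi=u\,|u_k|^{p-2}$, where $u_k=\max(-k,\min(u,k))$. This function lies in $H^1$ because $u_k$ is bounded.
\begin{itemize}
\item For the local part I use the standard inequality $\int\nabla u\cdot\nabla(u|u_k|^{p-2})\ge \frac{4(p-1)}{p^2}\int|\nabla(u|u_k|^{p/2-1})|^2$.
\item For the nonlocal part the key point is the sign $\ll u,u|u_k|^{p-2}\gg\ \ge 0$. In fact \cite[Lemma 3.5]{Giacomoni2020regularity} gives the stronger bound $\ll u,u|u_k|^{p-2}\gg\ \ge c_p[u|u_k|^{p/2-1}]^2\ge0$.
\end{itemize}
Hence the fractional term can simply be dropped from the left-hand side. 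This yields
\begin{equation*}
\frac{4(p-1)}{p^2}\|\nabla(u|u_k|^{p/2-1})\|_2^2+\|u|u_k|^{p/2-1}\|_2^2\le \int(I_\alpha*Hu)K\,u|u_k|^{p-2}\,dx .
\end{equation*}

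\textbf{Step 3: absorb and iterate.} I split the right-hand side using Step 1. The $H_*,K_*$ part is bounded by $\varepsilon$ times the left-hand side with $\varphi=u|u_k|^{p/2-1}$, and I choose $\varepsilon<\frac{4(p-1)}{p^2}$ to absorb it. The remaining parts are bounded by $C(\Lambda_\varepsilon)\|u\|_{L^p}^p$-type quantities, which are finite once $u\in L^p$. Letting $k\to\infty$ by Fatou then gives $u|u|^{p/2-1}\in H^1$, so $u\in L^{p2^*/2}$.

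Starting from $p=2$, I bootstrap as long as the HLS exponents allow the $H^*$ and $K^*$ terms to be controlled. This is exactly the restriction that gives $p<\frac{N}{\alpha}2^*$. Since $H,K\in L^{2N/\alpha}+L^{2N/(\alpha+2)}$, the Riesz potential $I_\alpha*(H^*u)$ lies in a Lebesgue space only within a range of exponents dictated by HLS, and one tracks these as in \cite{Moroz2015existence}. Interpolation with $L^2$ then gives every $q\in[2,\frac N\alpha 2^*)$.

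\textbf{Main obstacle.} The only genuinely new point is the nonlocal truncation inequality in Step 2. Once it is in hand, the rest is a transcription of \cite[Proposition 3.1]{Moroz2015existence}. The one care needed there is that the smallness estimate in Step 1 must be against the full $H^1$ norm, not the mixed norm, which is harmless since the mixed norm dominates it.
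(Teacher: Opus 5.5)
Your overall route is the one the paper relies on. The paper does not prove this proposition; it quotes it from \cite[Proposition~2.1]{Giacomoni2025Normalized}, which is Moroz--Van Schaftingen's Brezis--Kato argument \cite{Moroz2015existence} with the fractional term handled through \cite[Lemma~3.5]{Giacomoni2020regularity}. Your Step~2 is fine: the map $t\mapsto t|t_k|^{p-2}$ ($t_k$ the truncation at level $k$) is nondecreasing, so the Gagliardo term is nonnegative and can be dropped.

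\textbf{The gap is the absorption in Step~3.} Step~1 is the $\theta=1$ estimate $\int(I_\alpha*|H_*\varphi|)|K_*\varphi|\le\varepsilon\|\varphi\|_{H^1}^2$. The quantity you must absorb is
\[
\int_{\mathbb{R}^N}(I_\alpha*|H_*u|)\,|K_*|\,|u|\,|u_k|^{p-2}\,dx ,
\]
where the weight $|u_k|^{p-2}$ sits entirely in the outer variable. By contrast, $(I_\alpha*|H_*w|)|K_*w|$ with $w=u|u_k|^{p/2-1}$ puts $|u_k|^{p/2-1}$ in each variable. These are not comparable: on $\{|u|\le k\}$ the ratio of the two $(x,y)$-integrands is $(|u(x)|/|u(y)|)^{p/2-1}$, which is unbounded.

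The correct bookkeeping is $|u|=|w|^{2/p}$, $|u|^{p-1}=|w|^{2-2/p}$. This needs the estimate of \cite{Moroz2015existence} with a general exponent,
\[
\int_{\mathbb{R}^N}(I_\alpha*(H|v|^{\theta}))\,K|v|^{2-\theta}\,dx\le\varepsilon\|\nabla v\|_2^2+C_\varepsilon\|v\|_2^2,\qquad \tfrac{\alpha}{N}<\theta<2-\tfrac{\alpha}{N},
\]
used with $\theta=2/p$. Its admissibility condition $\alpha/N<2/p$, i.e.\ $p<2N/\alpha$, combined with the Sobolev gain $p\mapsto pN/(N-2)$, is what produces the endpoint $\frac{N}{\alpha}2^*$ in \cite{Moroz2015existence}. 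It does not come from HLS bookkeeping for the bounded parts $H^*,K^*$, as you suggest.

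\textbf{A second obstruction remains even with $\theta=2/p$.} The convolution contains the untruncated $u$, and on $\{|u|>k\}$ one has $|u|=(|u|/k)^{1-2/p}|w|^{2/p}$. So $|u|$ is not controlled by $|w|^{2/p}$ there. Consider the interaction of the $L^{2N/(\alpha+2)}$ components of $H_*$ and $K_*$ on this set, with $w$ known only in $L^2\cap L^{2^*}$. HLS and H\"older then require $u\chi_{\{|u|>k\}}\in L^q$ for some $q\ge pN/(N-2)$, which is the very conclusion you are after. Smallness of $H_*,K_*$ does not help.

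This is why \cite{Moroz2015existence} do not test the original equation directly. Instead they:
\begin{itemize}
\item take $H_k,K_k\in L^{2N/\alpha}$ with $|H_k|\le|H|$, $|K_k|\le|K|$, and $H_k\to H$, $K_k\to K$ a.e.;
\item use the $\theta=1$ estimate only to make the auxiliary bilinear form coercive;
\item solve, in the present setting, $-\Delta u_k+(-\Delta)^su_k+\lambda u_k-(I_\alpha*(H_ku_k))K_k=(\lambda-1)u$ by Lax--Milgram;
\item test with truncated powers of $u_k$ and apply the $\theta=2/p$ estimate, whose constants depend only on $H,K$;
\item use that, for fixed $k$, the coefficients are subcritical, which makes the truncation region harmless;
\item pass to the weak limit $u_k\rightharpoonup u$ using $k$-uniform bounds.
\end{itemize}
This approximation step, or an equivalent device, is the missing idea. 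Without it Step~3 does not close.
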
 
	\noindent	Using the above proposition and growth of $F$ we can prove Lemma \ref{lemma_6.1} as follows:
	\begin{myproof}{Lemma}{\ref{lemma_6.1}}
		Define $H,K:\mathbb{R}^N\rightarrow \mathbb{R}$ as:
		$$H(x):=\frac{F(u(x))}{u(x)}  \text{ and } K(x):=f(u(x)).$$
		By \ref{f1} we have:
		$$|H(x)|\leq C\left(\frac{N}{N+\alpha}|u(x)|^{\frac{\alpha}{N}}+\frac{N-2}{N+\alpha}|u(x)|^{\frac{\alpha+2}{N-2}}\right), $$
        and 
        $$|K(x)|\leq C\left(|u(x)|^{\frac{\alpha}{N}}+|u(x)|^{\frac{\alpha+2}{N-2}}\right),$$
		thus $H,K\in L^{\frac{2N}{\alpha}}(\mathbb{R}^N)+L^{\frac{2N}{\alpha+2}}(\mathbb{R}^N)$. Applying Proposition \ref{prop6.1}, we are done with the proof.
	\end{myproof}
\begin{myproof}{Theorem}{\ref{L_infinity}}
		Let $0\neq u \in H^1(\mathbb{R}^N)$ be a weak solution of \eqref{1.1}, then by \ref{f1} we have:
	\begin{align*}
		(I_{\alpha}*F(u))(x) & =  \int_{\mathbb{R}^N}\frac{A_{\alpha}F(u(x-y))}{|y|^{N-\alpha}}dy\\
		&\leq C_{\alpha}\left(\int_{\mathbb{R}^N}\frac{|u(x-y)|^{\frac{N+\alpha}{N}}}{|y|^{N-\alpha}} dy+\int_{\mathbb{R}^N}\frac{|u(x-y)|^{\frac{N+\alpha}{N-2}}}{|y|^{N-\alpha}} dy\right).
	\end{align*}
	Now, for $\beta\in \{ \frac{N+\alpha}{N}, \frac{N+\alpha}{N-2}\}$, by H$\ddot{\text{o}}$lder's inequality and Lemma \ref{lemma_6.1} we have:
	\begin{eqnarray*}
		\int_{\mathbb{R}^N}\frac{|u(x-y)|^{\beta}}{|y|^{N-\alpha}}dy & = & \int_{B_1}\frac{|u(x-y)|^{\beta}}{|y|^{N-\alpha}}dy+\int_{\mathbb{R}^N\setminus B_1}\frac{|u(x-y)|^{\beta}}{|y|^{N-\alpha}}dy\\
		& \leq & C_1\left(\int_{B_1}\frac{dy}{|y|^{\frac{(N-\alpha)\gamma}{\gamma-1}}}\right)^{\frac{\gamma-1}{\gamma}}+C_2\left(\int_{\mathbb{R}^N\setminus B_1}\frac{dy}{|y|^{2N}}\right)^\frac{N-\alpha}{2N}<M,
	\end{eqnarray*}
	for some $M>0$ whenever  $1<\frac{N}{\alpha}<\gamma < \frac{2^*N}{\alpha\beta}$. Therefore, 
	\begin{equation}\label{eq I_alpha}
		(I_{\alpha}*F(u))\in L^{\infty}(\mathbb{R}^N),
	\end{equation}
		and hence, there exists some constant $M_1>0$ such that $(I_{\alpha}*F(u))(x)\leq M_1$ for every $x\in \mathbb{R}^N$. By \ref{f1} we get:
		\begin{equation*}
			|(I_{\alpha}*F(u))f(u)|\leq M_1 C\left(|u|^{\frac{\alpha}{N}}+|u|^{\frac{\alpha+2}{N-2}}\right).
		\end{equation*}
		Now, if $|u(x)|\leq 1$, we have $|u|^{\frac{\alpha}{N}}+|u|^{\frac{\alpha+2}{N-2}}\leq 2 \leq 2(1+|u|^{\frac{N+2}{N-2}})$, else $|u(x)|>1$ and hence $|u|^{\frac{\alpha}{N}}+|u|^{\frac{\alpha+2}{N-2}}<2|u|^{\frac{N+2}{N-2}}\leq 2(1+|u|^{\frac{N+2}{N-2}})$. Thus
		\begin{equation}\label{eq_4.3}
			|(I_{\alpha}*F(u))f(u)|\leq 2M_1C (1+|u|^{2^*-1})=C_1(1+|u|^{2^*-1}).
		\end{equation}
	{Now motivated by \cite{Su-Valdinoci-Wei-Zhang}, we prove the $L^\infty(\mb R^N)$ estimate as follows: }for $\beta>1$ and $k>0$, let us define:
	\begin{equation}\label{Phi_k}
		\Phi_k(t):=\left\{
		\begin{array}{ll}
			-\beta k^{\beta-1}(t+k)+k^{\beta} & \text{ for } t\leq k, \\
			|t|^{\beta} & \text{ for } -k < t\leq k,\\
			\beta k^{\beta-1}(t-k)+k^{\beta} & \text{ for } k< t.
		\end{array}
		\right.
	\end{equation}
	Clearly $\Phi$ is convex, Lipschitz and satisfies the following:
	\begin{equation}\label{prop_of_Phi_k}
		\Phi_k(t)\leq |t|^{\beta};\;\; |\Phi_k'(t)|\leq\beta |t|^{\beta-1};\;\;\Phi_k(t)\leq t\Phi_k'(t) \leq \beta\Phi_k(t).
	\end{equation}
	Now, for any $\psi\in H^1(\mathbb{R}^N)$, by convexity of $\Phi_k$ we have:
	\begin{eqnarray}\label{eq_4.6}
		\ll \Phi_k(u), \psi\gg & = & \frac{C(N,s)}{2}\int_{\mathbb{R}^N}\int_{\mathbb{R}^N} \frac{(\Phi_k(u(x))-\Phi_k(u(y)))(\psi(x)-\psi(y))}{|x-y|^{N+2s}}dxdy\nonumber\\
		& \leq & \frac{C(N,s)}{2}\int_{\mathbb{R}^N}\int_{\mathbb{R}^N}\frac{(u(x)-u(y))(\psi(x)\Phi_k'(u(x))-\psi(y)\Phi_k'(u(y)))}{|x-y|^{N+2s}}dxdy\nonumber\\
		& = & \ll u, \psi\Phi_k'(u)\gg,
	\end{eqnarray}
	since $\psi(x)\Phi_k'(u(x))(u(x)-u(y))\geq \psi(x)(\Phi_k(u(x))-\Phi_k(u(y)))$ (see \cite[Lemma~A.1]{Brasco2016second}). Replacing $\psi$ by $\Phi_k(u)$ in \eqref{eq_4.6}, by \cite[Theorem~6.5]{Nezza2012Hitchhiker} there exists a constant $C_2>0$
	\begin{equation}\label{eq_4.7}
		\left\|\Phi_k(u)\right\|_{2^*_s}^{2}\leq C_2 [\Phi_k(u)]^2=C_2 \ll \Phi_k(u), \Phi_k(u)\gg\leq \ll u, \Phi_k(u)\Phi_k'(u)\gg,
	\end{equation}
	also, 
    {since $\Phi_K(t)\Phi_K''(t)\geq 0$ almost everywhere,}
	\begin{align}\label{eq_4.8}\nonumber
		\int_{\mathbb{R}^N}\nabla u \nabla(\Phi_k(u)\Phi_k'(u))\,dx & =  \int_{\mathbb{R}^N}\Phi_k'(u)^2|\nabla u|^2\,dx+\int_{\mathbb{R}^N}\Phi_k(u)\Phi_k''(u)|\nabla u|^2\,dx\\
        &\geq \int_{\mathbb{R}^N}\Phi_k'(u)^2|\nabla u|^2.
	\end{align}
	Thus by \eqref{eq_4.8}, \eqref{eq_4.7}, \eqref{prop_of_Phi_k}, and taking $\Phi_k'(u)\Phi_k(u)$ as test function {in  \eqref{weak_sol}}, 
	\begin{align}\label{eq_4.9}\nonumber
		\int_{\rnn}|\nabla u|^2|\Phi_k'(u)|^2 \,dx&+\int_{\rnn} |\Phi_k(u)|^2\,dx\nonumber\\
		 & \leq  \int_{\rnn}|\nabla u|^2|\Phi_k'(u)|^2\,dx+\int_{\rnn}u \Phi_k'(u)\Phi_k(u)\,dx\nonumber\\
		 &\leq  \int_{\rnn}\nabla u \nabla(\Phi_k(u)\Phi_k'(u))\,dx+\int_{\rnn} u \Phi_k'(u)\Phi_k(u)\,dx\nonumber\\
		& = \int_{\rnn}(I_{\alpha}*F(u))f(u)\Phi_k(u)\Phi_k'(u)\,dx-\ll u, \Phi_k'(u)\Phi_k(u)\gg \nonumber\\
		& \leq \int_{\rnn}(I_{\alpha}*F(u))f(u)\Phi_k(u)\Phi_k'(u)\,dx -\left\| \Phi_k(u) \right\|_{2^*_s}^2\nonumber\\
        &\leq \int_{\rnn}(I_{\alpha}*F(u))f(u)\Phi_k(u)\Phi_k'(u)\,dx.
	\end{align}
	Now, by the continuous embedding $H^1(\mathbb{R}^N)\hookrightarrow L^{2^*}(\mathbb{R}^N)$, \eqref{eq_4.3}, \eqref{prop_of_Phi_k} and \eqref{eq_4.9} there exists $C_3>0$ such that
	\begin{align}\label{eq_4.10}\nonumber
		\left\| \Phi_k(u)\right\|_{2^*}^2 \leq & C_3 \left(\left\| \nabla \Phi_k(u)\right\|_2^2+\left\| \Phi_k\right\|_2^2\right)= C_3\left(\int_{\rnn} |\Phi_k'(u)|^2|\nabla u|^2\,dx+\int_{\rnn}|\Phi_k(u)|^2\,dx\right)\\ \nonumber
		 \leq & C_3 \int_{\rnn}(I_{\alpha}*F(u))f(u)\Phi_k(u)\Phi_k'(u)\,dx\\
          \leq &C_3C_1\int_{\rnn}(1+|u|^{2^*-1})|\Phi_k(u)||\Phi_k'(u)|\,dx\\
	 \leq & C_1C_3 \beta\int_{\rnn}|u|^{2\beta-1}\,dx+ C_1C_3\beta\int_{\rnn}|\Phi_k(u)|^2|u|^{2^*-2}\,dx,
	\end{align}
	here 
	\begin{eqnarray*}
		\int_{\rnn}|\Phi_k(u)|^2|u|^{2^*-2}\,dx & = & \int_{\{|u|\leq k\}} |\Phi_k(u)|^2|u|^{2^*-2}\,dx+ \int_{\{|u|> k\}}|\Phi_k(u)|^2|u|^{2^*-2}\,dx\\
		& \leq & \int_{\{|u|\leq k\}}|u|^{2\beta +2^*-2}\,dx+C_4\int_{\{|u|>k\}} |u|^2|u|^{2^*-2}\,dx\\
		& \leq & k^{2\beta-2}\left\| u \right\|_{2^*}^{2^*}+C_4\left\| u \right\|_{2^*}^{2^*}<+\infty,
	\end{eqnarray*}
	since $\Phi_k$ is a linear function for $|t|>k $. Thus, taking $\beta=\beta_1:=\frac{2^*+1}{2}$, the right hand side of \eqref{eq_4.10} is well defined. Now, for any $R>0$, by H$\ddot{\text{o}}$lder's inequality
	\begin{eqnarray*}
		\int_{\rnn}|\Phi_k(u)|^2|u|^{2^*-2}\,dx & = & \int_{|u|\leq R}\frac{|\Phi_k(u)|^2|u|^{2^*-1}}{|u|}\,dx+\int_{\{|u|>R\}}|\Phi_k(u)|^2|u|^{2^*-2}\,dx\\
		& \leq & R^{2^*-1}\int_{\rnn}\frac{\Phi_k(u)^2}{|u|}\,dx\\
        &&+\left(\int_{\{|u|>R\}}|\Phi_k(u)|^{2^*}\,dx\right)^{\frac{2}{2^*}}\left(\int_{\{|u|>R\}}|u|^{2^*}\,dx\right)^{\frac{2^*-2}{2^*}},
	\end{eqnarray*}
	taking $R>0$ large enough, so that 
	$$\left(\int_{\{|u|>R\}}|u|^{2^*}\,dx\right)^{\frac{2^*-2}{2^*}}\leq \frac{1}{2\beta C_1C_3},$$
	since $\Phi_k(u)\leq |u|^{\beta_1}$ by \eqref{eq_4.10}, we get:
	\begin{equation*}
		\left\| \Phi_k(u)\right\|_{2^*}^2 \leq  2C_1C_3\beta\left(\left\| u \right\|_{2^*}^{2^*}+R^{2^*-1}\int_{\rnn}|u|^{2\beta_1-1}\,dx\right)= 2C_1C_3\beta(1+R^{2^*-1})\left\| u \right\|_{2^*}^{2^*}.
	\end{equation*}	
	Now, taking $k\rightarrow \infty$, clearly, $\Phi_k(t)\rightarrow |t|^{\beta_1}$, thus by Fatou's lemma, we have:
	\begin{equation*}
		\left\| u \right\|_{2^*\beta_1}^{2\beta_1}=\left(\int_{\rnn}|u|^{2^*\beta_1}\,dx\right)^{\frac{2}{2^*}}\leq 2C_1C_3\beta(1+R^{2^*-1})\left\| u \right\|_{2^*}^{2^*}<+\infty.
	\end{equation*}
	Therefore \begin{equation*}
		u\in L^{2^*\beta_1}(\mathbb{R}^N).
	\end{equation*}
	Now, suppose $\beta>\beta_1$, then using $\Phi_k(u)\leq |u|^{\beta}$ in \eqref{eq_4.10}, we have:
	\begin{equation}\label{eq_4.11}
		\left\| \Phi_k(u) \right\|_{2^*}^2 \leq C_1C_3\beta \left(\int_{\rnn}|u|^{2\beta-1}\,dx+\int_{\rnn}|u|^{2\beta+2^*-2}\,dx\right).
	\end{equation}
	Taking $A=\frac{2(\beta-1)}{2^*-1}>1$ and $B=\frac{2(\beta-1)}{2(\beta-1)-2^*+1}>1$; and using Young's inequality, we get:
	\begin{eqnarray*}
		\int_{\rnn}|u|^{2\beta-1}\,dx & = & \int_{\rnn}|u|^{\frac{2^*}{A}}|u|^{2\beta-1-\frac{2^*}{A}}\,dx\\
		& \leq & \int_{\rnn}\frac{\left(|u|^{\frac{2^*}{A}}\right)^{A}}{A}\,dx+ \int_{\rnn}\frac{\left(|u|^{2\beta-2-\frac{2^*}{A}}\right)^B}{B}\,dx\\
        &=&
		\int_{\rnn}\frac{|u|^{2^*}}{A}\,dx+\int_{\rnn}\frac{|u|^{2\beta+2^*-2}}{B}\,dx\\
		& \leq & \left\| u \right\|_{2^*}^{2^*}+\int_{\rnn}|u|^{2\beta+2^*-2}\,dx,
	\end{eqnarray*}
	thus \eqref{eq_4.11} becomes:
	\begin{equation}
		\left\| \Phi_k(u)\right\|_{2^*}^2 \leq C_1C_3\beta \left(\left\| u \right\|_{2^*}^{2^*}+2\int_{\rnn}|u|^{2\beta+2^*-2}\,dx\right)\leq C_4 \beta\left(1+\int_{\rnn}|u|^{2\beta+2^*-2}\,dx\right),
	\end{equation}
	then, by Fatous's lemma
	\begin{equation*}
		\left\| u \right\|_{2^*\beta}^{2\beta} = \left(\int_{\rnn} |u|^{2^*\beta}\,dx\right)^{\frac{2}{2^*}}  \leq C_4 \beta\left(1+\int_{\rnn}|u|^{2\beta+2^*-2}\,dx\right),
	\end{equation*}
	and hence 
	\begin{equation}\label{eq_4.13}
		\left(1+\int_{\rnn}|u|^{2^*\beta}\right)^{\frac{1}{2^*(\beta-1)}}\,dx\leq  (C_5 \beta)^{\frac{1}{2(\beta-1)}}\left(1+\int_{\rnn}|u|^{2\beta+2^*-2}\,dx\right)^{\frac{1}{2^*(\beta-1)}\,}.
	\end{equation}
	Let us define the sequence $\{\beta_n\}_{n\in\mathbb{N}}$ such that $2\beta_{n+1}+2^*-2=2^*\beta_{n}$, which implies that
	$$\beta_{n+1}-1=\left(\frac{2^*}{2}\right)^{n}\left(\beta_1-1\right),$$
	with $\beta_1=\frac{2^*+1}{2}$.
	Replacing $\beta$ by $\beta_{n+1}$ in \eqref{eq_4.13} we have
	\begin{eqnarray*}
		\left(1+\int_{\rnn}{|u|^{2^*\beta_{n+1}}}\,dx\right)^{\frac{1}{2^*(\beta_{n+1}-1)}}& \leq & (C_5\beta_{n+1})^{\frac{1}{2(\beta_{n+1}-1)}}\left(1+\int_{\rnn}|u|^{2^*\beta_n}\,dx\right)^{\frac{1}{2(\beta_{n+1}-1)}}\\
		& = & (C_5\beta_{n+1})^{\frac{1}{2(\beta_{n+1}-1)}}\left(1+\int_{\rnn}|u|^{2^*\beta_n}\,dx\right)^{\frac{1}{2^*(\beta_{n}-1)}}.
	\end{eqnarray*}
	Denoting $\tau'_n:=C_5 \beta_{n}$ and $A_n:=\left(1+\int_{\rnn}|u|^{2^*\beta_{n}}\right)^{\frac{1}{2^*(\beta_{n}-1)}}$, we get:
	\begin{eqnarray*}
		A_{n+1}  \leq  (\tau'_{n+1})^{\frac{1}{2(\beta_{n+1}-1)}}A_n & \leq & (\tau'_{n+1})^{\frac{1}{2(\beta_{n+1}-1)}}(\tau'_n)^{\frac{1}{2(\beta_{n}-1)}}A_{n-1}\\
        &&\cdots \leq \prod_{i=2}^{n+1}\gamma_i^{\frac{1}{2(\beta_i-1)}}A_1\leq C_0 A_1<+\infty,
	\end{eqnarray*}
	since $u\in L^{2^*\beta_1}(\mathbb{R}^N)$, and $\frac{1}{\beta_i -1}=\left(\frac{2}{2^*}\right)\frac{1}{(\beta_1-1)}<\frac{1}{2}$. 
    This gives that $u\in L^\infty(\mathbb{R}^N)$.

	Next we will see that $u\in L^q(\mathbb{R}^N)$ for all $q\geq 2$ as well. Recall \eqref{eq_4.10}, since $\Phi_k(u)\leq |u|^{\beta}$, then as done above, using Young's inequality we have:
	\begin{eqnarray*}
		\left\| \Phi_k(u) \right\|_{2^*}^2 & \leq & C_1C_3\beta \left(\int_{\rnn}|u|^{2\beta-1}\,dx+\int_{\rnn}|u|^{2\beta+2^*-2}\,dx\right)\\
		& = & C_1C_3\beta \left(\int_{\rnn}|u|^{\frac{2^*}{A}}|u|^{2\beta-1-\frac{2^*}{A}}\,dx+\int_{\rnn}|u|^{2\beta+2^*-2}\,dx\right)\\
		& \leq & C_1C_3\beta \left(\frac{1}{A}\int_{\rnn}|u|^{2^*}\,dx+\frac{1}{B}\int_{\rnn}|u|^{2\beta+2^*-2}\,dx+\int_{\rnn}|u|^{2\beta+2^*-2}\,dx\right)\\
		& \leq & C_6\left(1+ \int_{\rnn}|u|^{2\beta+2^*-2}\,dx\right),
	\end{eqnarray*}
	hence, by Fatou's lemma
	\begin{equation}\label{eq_4.14}
		\left\| u \right\|_{2^*\beta} ^{2^*\beta} \leq \bar{C_{\beta}}\left(1+\int_{\rnn}|u|^{2\beta+2^*-2}\,dx\right)^{\frac{2^*}{2}} \text{ for all } \beta>1.
	\end{equation}
	Now, let $\gamma_0:=\frac{N}{\alpha}2^*$, then by Lemma \ref{lemma_6.1} we know that $u\in L^q(\mathbb{R}^N)$ for all $q\in [2,\gamma_0)$ and hence
	$$\int_{\rnn}|u|^{2\beta+2^*-2}\,dx<+\infty \text{ for all } 1 < \beta < \frac{\gamma_0+2-2^*}{2},$$
	thus by \eqref{eq_4.14}, $u\in L^q(\mathbb{R}^N)$ for all $q\in \left(2^*,2^*\left(\frac{\gamma_0+2-2^*}{2}\right)\right)$. Taking $\gamma_1:=2^*\left(\frac{\gamma_0+2-2^*}{2}\right)>\gamma_0$, now we know that $u\in L^q(\mathbb{R}^N)$ for all $q\in [2,\gamma_1)$. Moving on in similar way, now
	$$\int_{\rnn}|u|^{2\beta+2^*-2}\,dx<+\infty \text{ for all } 1<\beta< \frac{\gamma_1+2-2^*}{2},$$
	and hence by \eqref{eq_4.14}, $u\in L^q(\mathbb{R}^N)$ for all $q\in [2, \gamma_2)$, where $\gamma_2=2^*\left(\frac{\gamma_1+2-2^*}{2}\right)>\gamma_1$. Constructing the sequence $\{\gamma_n\}_{n\in\mathbb{N}}$ such that $\gamma_0$ is as defined above and $\gamma_{n+1}=2^*\left(\frac{\gamma_n+2-2^*}{2}\right)$; and iterating as above, we get $u\in L^q(\mathbb{R}^N)$ for all $q\in [2,\gamma_n)$ and $n\in \mathbb{N}$. Since $\gamma_n \rightarrow\infty$, we get
    \begin{equation}\label{eq_4.15}
    	u\in L^q(\mathbb{R}^N) \text{ for all } q\geq 2.
    \end{equation}
    Now for the Sobolev regularity, following the proof of \cite[{Theorem 1.2}]{Giacomoni2025Normalized} we reformulate our problem as:
    $$-\Delta u +\mathcal{L}_1(u)+u = (I_{\alpha}*F(u))f(u)-\mathcal{L}_2(u),$$
    where
    $$\mathcal{L}_1(u):=C(N,s)P.V\int_{|x-y|\leq B}\frac{u(x)-u(y)}{|x-y|^{N+2s}}dy;$$
    and $$\mathcal{L}_2(u):=C(N,s)P.V\int_{|x-y|>B}\frac{u(x)-u(y)}{|x-y|^{N+2s}}dy,$$
    for some fixed $B>0$. Defining $g:=(I_{\alpha}*F(u))f(u)-\mathcal{L}_2(u)$, since 
    $$\int_{\rnn}|(I_{\alpha}*F(u))f(u)|^q\,dx\leq M_1C\int_{\rnn}\left(|u|^{\frac{q\alpha}{N}}+|u|^{\frac{q(\alpha+2)}{N-2}}\right)\,dx<+\infty \text{ for all } q\geq \frac{2N}{\alpha}, $$
    and, by Jensen's inequality,
    \begin{eqnarray*}
    	\int_{\rnn}|\mathcal{L}_2(u)|^q\,dx & \leq & C(N)\int_{\rnn}\left(\int_{|x-y|>B}\frac{|u(x)-u(y)|^q}{|x-y|^{q(N+2s)}}dy\right)dx\\
    	& \leq & 2C(N,q)\int_{\rnn}|u(x)|^q\left(\int_{|x-y|>B}\frac{1}{|x-y|^{q(N+2s)}}dy\right)dx\\
    	& \leq & C(B,N,q)\int_{\rnn}|u|^q\,dx<+\infty \text{ for all } q\geq \frac{2N}{\alpha},
    \end{eqnarray*}
     thus by \cite[Theorem~3.1.20]{Garroni2002Second} with $\Omega_I$ being the ball of radius $B$ centered at origin and a fixed bounded domain $\Omega$, get $u\in W^{2,q}_{loc}(\mathbb{R}^N)$ for all $q\geq \frac{2N}{\alpha}$ and hence in $W^{2,q}_{loc}(\mathbb{R}^N)$ for all $q\geq2$. Further using Sobolev inequality, we conclude that $u \in C^{1,\delta}_{\text{loc}}(\mathbb R^N)$, for all $\delta \in (0,1)$.
\end{myproof}\\
Now we are in a position to give a proof of Poho\v{z}aev identity in the spirit of \cite[Theorem 2.5]{Anthal2025Pohozaev}.
\begin{myproof}{Theorem}{\ref{Pohozaev_identity}}
Let $\psi \in C_c^1(\mb R^N)$ be such that $0 \leq \psi \leq 1$ in $\mb R^N$, $\psi\equiv 1$ for $|x| \leq 1$ and $\psi(x)\equiv 0$ for $|x| \geq 2$. We define $ \psi_\la(x) =\psi(\la x)$ for all $x \in \mb R^N$ and $\la >0$. Note that for all $x \in \mb R^N$ and $\la >0$, we get
		\begin{align}\label{e2.2}
			0 \leq \psi_\la (x) \leq 1~\text{and}~|x||\na \psi_\la(x)| \leq C,
		\end{align}
		where the constant $C>0$ is independent of $\la$. Also, we define the $C^1$ vector field as 
        \begin{equation}\label{psil}
        \Psi_\la(x)=\psi_\la(x)x.
        \end{equation}
        Now using \eqref{e2.2}, we conclude that
			\begin{align*}
				|\Psi_\la(x)-\Psi_\la(y)| \leq C|x-y|,
			\end{align*}
			for some constant $C>0$ independent of $\la$ and so $\Psi_\la \in C_c^{0,1}(\mb R^N, \mb R^N)$. Then choosing the test function $\varphi=\psi_\la\sum\limits_{j=1}^{N}x_j D_j u$ with $D_i$, the difference quotient operator defined as follows:
		$$D_iv(x)= \frac{v(x+he_i)-v(x)}{h},$$ 
        with $e_i=(0,\cdots , 1, \cdots,0)$ being the unit vector along   $x_{i}$ coordinate, 
        in \eqref{weak_sol}, we obtain
\begin{equation}\label{e4.8}
\begin{split}
   &\I{\mb R^N} \nabla u \cdot \nabla \varphi\, dx+  \I{\mb R^N}\I{\mb R^N} {(u(x)-u(y)) \varphi(x)-\varphi(y))}\,d\mu
   = \I{\mb R^N} ((I_\alpha \ast F(u)) f(u)-u )\varphi\, dx,
   \end{split}
\end{equation}
where for the sake of simplicity, we set $d\mu = \frac{dxdy}{|x-y|^{N+2s}}$ and drop the constant $C(N,s)$.
We observe that 
            \begin{equation}\label{MeI}
            \begin{split}
         \I{\mb R^N}\nabla u \cdot \nabla \varphi \,dx&= \I{\mb R^N}  \nabla u\cdot \nabla \left( \sum\limits_{j=1}^n \psi_\la x_j D_ju\right)\,dx \\
        &:= I_1 + I_2 + I_3,
        \end{split}
            \end{equation}
            where
            $$
            I_1=\sum\limits_{i,j=1}^N \I{\mb R^N}  \frac{\partial u }{\partial x_i}\frac{\partial \psi_\la }{\partial x_i}x_j D_j u\,dx,
            $$
            $$
            I_2=\sum\limits_{i,j=1}^N\I{\mb R^N} \frac{\partial  u}{\partial x_i} \psi_\la D_j(u) \delta_{ij} \,dx,
            $$
            and
            $$
            I_3=\sum\limits_{i,j=1}^N \I{\mb R^N}  \frac{\partial u}{\partial x_i}  \frac{\partial (D_j u)}{\partial x_i} {\psi_\la} x_j\,dx:=I_{3,1}-I_{3,2},
            $$
            where
            $$
    I_{3,1}=\frac{1}{2} \sum\limits_{j=1}^N \I{\mb R^N} D_j(|\nabla u|^2) \psi_\lambda x_j \,dx,
    $$
    and
    $$
    I_{3,2}=\frac{1}{2}\sum\limits_{j =1}^N \I{\mb R^N}(D_j(|\nabla u|^2) -\nabla u\cdot D_j(\nabla u))\psi_\lambda x_j \,dx.
    $$
    Here $\delta_{ij}$ being the Kronecker symbol, that is, $\delta_{ij}=1$ when $i=j$ and $0$ otherwise. Also,
\begin{equation}\label{nonl-II}
\begin{split}
&\I{\mb R^N}\I{\mb R^N}{(u(x)-u(y))(\varphi(x)-\varphi(y))}\,d\mu\\
=&\quad\sum\limits_{j=1}^N \I{\mb R^N}\I{\mb R^N}{(u(x)-u(y))(\psi_\lambda(x) x_j D_j u(x)-\psi_\lambda(y) y_jD_j u(y))}\,d\mu\\
:=&\quad J_{1} + J_{2}, 
\end{split}
\end{equation}
where
\begin{align*}
    J_{1} =\beta\sum\limits_{j=1}^N \I{\mb R^N}\I{\mb R^N}{(u(x)-u(y))(\psi_\lambda(x) x_j -\psi_\lambda(y) y_j)D_j u(x)}\,d\mu,
\end{align*}
and 
\begin{align*}
    J_{2} =&\beta\sum\limits_{j=1}^N \I{\mb R^N}\I{\mb R^N} {(u(x)-u(y))(D_j u(x) - D_j u(y))\psi_\lambda(y)y_j}\,d\mu\\
    =& \beta\sum\limits_{j=1}^N \I{\mb R^N}\I{\mb R^N}{(u(x)-u(y))(D_j u(x) - D_j u(y))\psi_\lambda(x)x_j}\,d\mu\\
    =& J_{2,1} - J_{2,2},
    \end{align*}
    where
    \begin{align*}
      J_{2,1} =\frac{\beta}{p} \sum\limits_{j=1}^N \I{\mb R^N}\I{\mb R^N}{D_j(|u(x)-u(y)|^2)\psi_\lambda(x) x_j}\,d\mu,
    \end{align*}
    and
\begin{align*}
    J_{2,2}
    =\frac{1}{2}\sum\limits_{j=1}^N\I{\mb R^N}\I{\mb R^N}\Big(D_j(|u(x)-u(y)|^2)-2(u(x)-u(y))(D_j u(x) - D_j u(y))\Big)\psi_\lambda(x)x_j\,d\mu.
\end{align*}
Therefore, taking into account \eqref{MeI} and \eqref{nonl-II}, equation \eqref{e4.8} reduces into
\begin{equation}\label{lnest-II}
I_1+I_2+I_{3,1}-I_{3,2}+(J_{1}+J_{2,1}-J_{2,2})=L,
\end{equation}
where
\begin{equation*}
L=\I{\mb R^N} ((I_\alpha \ast F(u)) f(u)-u ) \Big(\sum\limits_{j=1}^N \psi_\lambda x_j D_j u\Big)\, dx.
\end{equation*}
Now the estimates for $I_1$, $I_2$, $I_{3,1}$, $J_1$ and $J_{2,1}$ as $h \ra 0$ follow exactly on the lines of \cite[Theorem 2.5]{Anthal2025Pohozaev} and we obtain 
   \begin{equation}\label{estI1-I}
       \lim_{h\to 0} I_1=  \I{\mb R^N} (\nabla u\cdot \nabla \psi_\lambda)( x \cdot \nabla u)\,dx.
    \end{equation}
   \begin{equation}\label{estI2-I}
      \lim_{h\to 0} I_2 = 
        \I{\mb R^N} |\nabla u|^2\,\psi_\lambda \, dx,
    \end{equation}
 \begin{equation}\label{I31est-I}
    \begin{split}
      \lim_{h\to 0} I_{3,1}=-\frac{N}{2} \I{\mb R^N} |\nabla u|^2 \psi_\lambda \,dx -\frac{1}{2} \I{\mb R^N} |\nabla u|^2 \nabla \psi_\lambda \cdot x \,dx,
  \end{split}
   \end{equation}
   \begin{equation}\label{estJ1-II}
\begin{split}
   \lim_{h\to 0} J_1=&-\frac{1}{2}  \I{\mb R^N} \I{\mb R^N }\frac{|u(x)-u(y)|^2}{|x-y|^{N+2s}}\text{div}\Psi_\lambda(x)\,dxdy\\
&+\frac{(N+2s)}{2}\I{\mb R^N}\I{\mb R^N} |u(x)-u(y)|^2 \frac{(x-y)\cdot (\Psi_\lambda(x) -\Psi_\lambda(y))}{|x-y|^{N+2s+2}}\,dxdy,
\end{split}
\end{equation}
and
\begin{equation}\label{estJ21-II}
\begin{split}
    \lim_{h\to 0}J_{2,1} 
    &=-\frac{N}{2} \I{\mb R^N}\I{\mb R^N}{|u(x)-u(y)|^2\psi_\lambda(x)}\,d\mu-\frac{1}{2}\I{\mb R^N}\I{\mb R^N}{|u(x)-u(y)|^2 \nabla \psi_\lambda(x) \cdot x}\,d\mu.
    \end{split}
    \end{equation}
    One can notice that in order to estimate $J_1$ in \cite{Anthal2025Pohozaev}, it is assumed that the solution is in $C^{0,\ell}(\mathbb{R}^N)$ for some $\ell>s$. However, the presence of the function with compact support, $\psi$, permits us to generalize everything for $u\in C^{0,\ell}_{loc}(\mathbb{R}^N)$ (and then for $u\in C^{1,\delta}_{loc}(\mathbb{R}^N)$) as well (see also arguments in $B_R(0)\times B_R(0)$ with fixed suitable $R>0$ after (3.3) page 2010  in \cite{ambrosio}).\\
\textbf{Estimate of $I_{3,2}+J_{2,2}$:} Using the convexity of the map $t \mapsto |t|^2$,  we deduce that
    \begin{align}\label{e4.3}
    D_j(|\nabla u|^2)& -2 \nabla u\cdot D_j(\nabla u)
   \geq 0,
    \end{align}
    and 
\begin{align}\label{e4.9}
    D_j(|u(x)-u(y)|)^2-2(u(x)-u(y))(D_j u(x)-D_j u(y)) \geq 0,
\end{align}
for every $x,y\in\mb R^N$.  We observe that for any {$K\in L^1(\mathbb{R}^N)$}, we have
    \begin{align}\label{e4.4}
       \I{\mb R^N} D_j K \,dx = \frac1h\I{\mb R^N} (K(x+he_j) -K(x))\,dx
       =\frac1h \left( \I{\mb R^N} K(x) \,dx - \I{\mb R^N} K(x) \,dx \right)
       =0.
    \end{align}
Using \eqref{e4.4}, we get
    \begin{align}\label{e4.5}
        \I{\mb R^N} D_j|\na u|^2 \,dx = \I{\mb R^N}D_j((I_\alpha \ast F(u))F(u))\,dx=0.    \end{align}
Further, using similar arguments as in \eqref{e4.4} for double integral, we have
\begin{align}\label{e4.10}
    \I{\mb R^N}\I{\mb R^N} {D_j(|u(x)-u(y)|^2)}\,d\mu =0,
\end{align}
\noindent Choosing $\phi=\sum_{j=1}^{N}D_j u$ as a test function in \eqref{weak_sol}, using \eqref{eq I_alpha}, \ref{f4} and combining the estimates \eqref{e4.3}, \eqref{e4.9}, \eqref{e4.5} and \eqref{e4.10}, we have
\begin{align*}
    \Big|&I_{3,2} + J_{2,2}\Big|\\
    \leq& C \sum\limits_{j=1}^N\left( \I{\mb R^N}{\Big(}D_j(|\nabla u|^2 )-2 \nabla u \cdot \nabla( D_ju ){\Big)} dx \right.\\
    &+ \left. \I{\mb R^N}\I{\mb R^N}{\Big(}{D_j(|u(x)-u(y)|^2)-2(u(x)-u(y))(D_j u(x) - D_j u(y))}{\Big)}\,d\mu\right)\\
    =&2C \sum\limits_{j=1}^N \I{\mb R^N} (D_j(I_\alpha \ast F(u)) F(u) - (I_\alpha\ast F(u))f(u) D_j u)\,dx\\
    \leq& 2\|I_\alpha \ast F(u)\|_\infty C \sum\limits_{j=1}^N\I{\mb R^N}(D_jF(u)-f(u)D_j u) \,dx,\\
 =& 2C \sum\limits_{j=1}^N\I{\mb R^N}(f(\tilde{u})-f(u))D_j u \,dx,
    \end{align*}
    where $C = C(\lambda,u,\alpha)>0$ is a constant, $\tilde{u}$ is between $u$ and $u(\cdot + he_j)$. Now using the H\"{o}lder's inequality in the above estimate, we obtain
    \begin{equation}\label{I32--I}
        \Big|I_{3,2}+J_{2,2}\Big| \leq C \sum\limits_{j =1}^N \| f(\tilde{u})-f(u)\|_2\|D_j u\|_2\leq  C \sum\limits_{j =1}^N \| f(\tilde{u})-f(u)\|_2\| \nabla u\|_2.
    \end{equation}
     Since $u( \cdot + he_j) \rightarrow u$ in $L^2(\mb R^N)$, we obtain $\tilde{u} \rightarrow u$ in $L^2(\mb R^N)$. Finally using \ref{f4}, we have $|f(\tilde{u})| \leq |\tilde{u}|$ and since $|\tilde{u}| \ra |u|$ in $L^{2}(\mathbb{R}^N)$, by the generalized Lebesgue's dominated convergence theorem, we have
    \begin{equation}\label{fu}
        \lim_{h\to 0}\|f(\tilde{u})-f(u)\|_2=0.
    \end{equation}
Using \eqref{fu} and that $\|\nabla u\|_2 < \infty$ in \eqref{I32--I}, we obtain 
\begin{equation}\label{I32J22-II}
\lim_{h\to 0}(I_{3,2}+J_{2,2})=0.
\end{equation}
    \textbf{Estimate of $L$:}  We have
    \begin{equation}\label{RHS-I}
    \begin{split}
       \lim_{h\to 0}L&=\lim_{h\to 0}\I{\mb R^N} ((I_\alpha \ast F(u))f(u)-u) \Big(\sum\limits_{j=1}^N \psi_\lambda(x) x_j D_j u\Big)\, dx\\
        & =  \I{\mb R^N}((I_\alpha \ast F(u))f(u)-u) \psi_\lambda(x) \nabla u\cdot x \,dx.
        \end{split}
    \end{equation}
Combining the estimates \eqref{estI1-I}, \eqref{estI2-I}, \eqref{I31est-I}, \eqref{I32J22-II}, \eqref{estJ1-II}, \eqref{estJ21-II} and \eqref{RHS-I} in \eqref{lnest-II}, we obtain
\begin{align}\nonumber \label{e4.11}
  &\I{\mb R^N}((I_\alpha \ast F(u))f(u)-u) \psi_\lambda(x) \nabla u\cdot x \,dx\\ \nonumber
  =  & \I{\mb R^N} (\nabla u \cdot \nabla \psi_\lambda(x))( x \cdot \nabla u)\,dx+ \frac{2-N}{2} \I{\mb R^N} |\nabla u|^2 \psi_\lambda(x) \,dx \nonumber
   -\frac{1}{2} \I{\mb R^N} |\nabla u|^2 \nabla \psi_\lambda(x) \cdot x \,dx\\ \nonumber
   &-\frac{N}{2} \I{\mb R^N}\I{\mb R^N}{|u(x)-u(y)|^2\psi_\lambda(x)}\,d\mu-\frac{1}{2}\I{\mb R^N}\I{\mb R^N}{|u(x)-u(y)|^2 \nabla \psi_\lambda(x) \cdot x}\,d\mu\\ \nonumber
   &-\frac{1}{2}  \I{\mb R^N} \I{\mb R^N }{|u(x)-u(y)|^2}\text{div}\Psi_\lambda(x)\,d\mu\\
&+\frac{N+2s}{2}\I{\mb R^N}\I{\mb R^N} |u(x)-u(y)|^2 \frac{(x-y)\cdot (\Psi_\lambda(x) -\Psi_\lambda(y))}{|x-y|^{N+2s+2}}\,dxdy.
\end{align}
           Finally, we pass on to the limits $\lambda \rightarrow 0$ in \eqref{e4.11}. To this end, using \eqref{e2.2}, we notice that, for every $x,y \in \mb R^N$ with $x \ne y$ and $\la >0$,
	\begin{align}\label{e2.7}
		\left| \text{div}(\Psi_\la(x))+\text{div}(\Psi_\la(y))-(N+2s)\frac{(\Psi_\la(x)-\Psi_\la(y))\cdot(x-y)}{|x-y|^2}\right| \leq C,
	\end{align}
	for some constant $C >0$ independent of $\la$.
	Using \eqref{e2.2}, \eqref{e2.7} along with the pointwise convergences $\psi_\la(x) \ra 1$, $\na \psi_\la(x)\cdot x \ra 0$, $\Psi_\la (x) \ra x$ and $\text{div}\, \Psi_\la(x) \ra N$ for all $x \in \mb R^N$ as $\la \ra 0$ and arguing as in \cite[Proposition~3.1]{Moroz2013groundstates}, we apply the Lebesgue's dominated convergence theorem to obtain
	\begin{align*}
	    \left(\frac{N-2}{2}\right) \| \nabla u\|_2^2 + \left(\frac{N-2s}{2}\right) [u]^2 + \frac{N}{2} \|u\|_2^2 = \left(\frac{N+\alpha}{2}\right)A(u),
	\end{align*}
    which is the required identity. 
\end{myproof}

\section*{Acknowledgments}

Prashanta Garain thanks IISER Berhampur for the seed grant:\\ IISERBPR/RD/OO/2024/15, Date: February
08, 2024. 

\noindent Nidhi Nidhi (PMRF ID - 1402685), is supported by the Ministry of Education, Government of India, under the Prime Minister’s Research Fellows (PMRF) scheme.

\noindent Gurdev Chand Anthal is currently working as a PDRF under the supervision of Prof. Minbo Yang. He acknowledges the support and guidance of Prof. Minbo Yang.

\bibliographystyle{abbrv}   
\bibliography{main}

 \noindent \textsf{Gurdev Chand Anthal }

 \noindent \textsf{School of Mathematical Sciences}
 
 \noindent \textsf{Zhejiang Normal University}
 
\noindent \textsf{Jinhua 321004, People's Republic of China.}
 
\noindent\textsf{e-mail}: gurdevanthal92@gmail.com
\vspace{0.75cm}

\noindent \textsf{Prashanta Garain}

\noindent \textsf{Department of Mathematical Sciences}

\noindent \textsf{Indian Institute of Science Education and Research Berhampur}

\noindent \textsf{Permanent Campus, At/Po:-Laudigam, Dist.-Ganjam}

\noindent \textsf{Odisha, India-760003}
\vspace{0.75cm}

\noindent \textsf{Nidhi Nidhi}

\noindent \textsf{Department of Mathematics}

\noindent \textsf{Indian Institute of Technology Delhi}

\noindent \textsf{Hauz Khas, New Delhi-110016, 
India}

\noindent \textsf{e-mail: nidhi.kaushik2809@gmail.com}

\end{document}